
\documentclass[final,3p,times]{elsarticle}




\usepackage{amssymb}



\usepackage{lipsum}
\usepackage{amsfonts}
\usepackage{graphicx}
\usepackage{epstopdf}
\usepackage{algorithmic}

\usepackage{natbib}
\usepackage{graphicx}
\usepackage{float}
\usepackage{subfig}
\usepackage{amsopn}
\usepackage{amsmath}
\usepackage{hyperref}
\usepackage{cleveref}
\usepackage{color,xcolor}


%
\newcommand{\tpr}{\boldsymbol{\otimes}}   
\newcommand{\dpr}{\boldsymbol{\cdot}}     

\newcommand{\vc}[1]{\boldsymbol{#1}}      






\newcommand{\dudx}[2]{\frac{\partial{#1}}{\partial{#2}}}

\newtheorem{remark}{Remark}

\newtheorem{proposition}{Proposition}
\newtheorem{proof}{Proof}


\begin{document}

\begin{frontmatter}



\title{Mathematical modelling of transport phenomena in compressible multicomponent flows}


\author[label1]{Chao Zhang}
\author[label1,label5]{Lifeng Wang}
\author[label1,label5]{Wenhua Ye}
\author[label1]{Junfeng Wu}
\author[label1,label5]{Zhijun Shen}
\author[label3,label4]{Igor Menshov}

\address[label1]{Institute of Applied Physics and Computational Mathematics, Beijing, China}
\address[label3]{Keldysh Institute for Applied Mathematics RAS, Moscow, Russia}
\address[label4]{SRISA RAS, Moscow, Russia}
\address[label5]{Center for Applied Physics and Technology, HEDPS, Peking University, Beijing, China}

\begin{abstract}
The present article proposes a diffuse interface model for compressible multicomponent flows with transport phenomena of mass, momentum and energy (i.e., mass diffusion, viscous dissipation and heat conduction). The model is reduced from the seven-equation Baer-Nuziato type model with asymptotic analysis in the limit of instantaneous mechanical relaxations. The main difference between the present model and the Kapila's five-equation model consists in that different time scales for pressure and velocity relaxations are assumed, the former being much smaller than the latter. Thanks to this assumption, the velocity disequilibrium is retained to model the mass diffusion process. Aided by the diffusion laws, the final model still formally consists of five equations. The proposed model satisfy two desirable properties : (1) it respects the laws of thermodynamics, (2) it is free of the spurious oscillation problem in the vicinity of the diffused interface zone.  For solution of the model governing equations, we implement the fractional step method to split the model into five physical steps: the hydrodynamic step, the viscous step, the heat transfer step, the heat conduction step and the mass diffusion step. The split governing equations for the hydrodynamic step formally coincide with the Kapila's five equation model and are solved with the Godunov finite volume method. The mass diffusion, viscous dissipation and heat conduction processes contribute parabolic partial differential equations that are solved with the Chebyshev method of local iterations. Numerical results show that the proposed model maintains pressure, velocity and temperature equilibrium near the diffused interface. Convergence tests demonstrate that the numerical methods achieve second order in space and time. The proposed model and numerical methods are applied to simulate the laser-driven RM instability problem in inertial confinement fusion, good agreement with experimental results are observed.
\end{abstract}



\begin{keyword}


Compressible multicomponent flow \sep mass diffusion  \sep viscosity \sep heat conduction \sep Godunov method \sep Chebyshev method of local iterations 
\end{keyword}

\end{frontmatter}


\section{Introduction}
\label{sec:intro}

Compressible multicomponent flows are of significance to many applications, such as the inertial confinement fusion (ICF),  the explosion of core-collapse supernova, underwater explosion (UNDEX) and so forth. These physical processes include Rayleigh-Taylor (RT) and Richtmyer-Meshkov (RM) hydrodynamic instabilities that rapidly develop in the presence of small initial perturbations. Up to now, understanding the development of these nonlinear instabilities  still heavily relies on numerical simulations. The present research is motivated by the need to simulate the  laser-driven plasma instability developed at the interface between dissimilar materials within ICF capsules. In such applications, the transport phenomena (of mass, momentum and energy) accompanying the hydrodynamic process play significant role. First of all, the laser energy deposited in the plasma is transported through the heat conduction process. Moreover, at small spatial scales the effect of viscous dissipation and the mass diffusion begin to impact the instability growth \cite{robey2004effects}. Numerically, the transport process is vital for achieving a grid-converged DNS (Direct Numerical
Simulation) \cite{vold2021plasma}.   

To correctly simulate the dissipative processes in multicomponent flows, the governing model  should satisfy two important criteria, i.e.,  physically admissibility  and numerical consistency. The former stipulates that the model should respect the first and second law of thermodynamics. The latter dictates that the closure relations and numerics do not cause non-physical spurious oscillations near the material interface. Several works in the literature have attempted in this direction \cite{Thornber2018,Cook2009Enthalpy}. 
The present work is performed in the framework of the diffuse interface model (DIM). We aim to incorporate various dissipative transport phenomena (including mass diffusion, viscous dissipation and heat conduction) into this framework with the above two criteria being maintained. 

The fully conservative four-equation DIM (i.e., Euler equation supplemented with one conservation equation for the partial density) is notorious for triggering spurious oscillations in pressure and velocity (at the hydrodynamic stage) when solved with the Godunov finite volume method (FVM). Analysis works on this phenomenon have been performed both physically and numerically \cite{abgrall1996prevent,abgrall2001computations,saurel1999multiphase}.    In this model there exists only one temperature and one pressure, implicitly assuming that the components  are in thermal and mechanical equilibrium. {\color{black}This assumption is commonly used for combustion and boiling problems, however, it maybe too strong to be valid for interface problems.}  Pressure and temperature disequilibria are diminished by compaction (pressure relaxation) and heat transfer (temperature relaxation) between components, respectively. However, these two processes may take place at very different time scales. For example, for the deflagration-to-detonation transition (DDT) in granular materials, the characteristic time scales for pressure relaxation and temperature relaxation are 0.03$\mu s$ and 18000$\mu s$ after sufficient combustion \cite{kapila2001two}, respectively. It is evident that forcing thermodynamical equilibrium to be reached at the same time scale may lead to physical inconsistency. 

To understand the temperature closure relations within the computational cell, let us look at the  resolved interface problem in in-miscible multicomponent flows (\Cref{fig:Tjump}). According to the Rankine-Hugoniot relation, the jump conditions across the interface (the line $C$ in \Cref{fig:Tjump}) along its normal direction read:
\begin{subequations}
\begin{align}
    \llbracket \rho Y_1 (u_n-\mathcal{U})\rrbracket = 0, \\
\llbracket\rho Y_2 (u_n-\mathcal{U})\rrbracket=0,\\
\llbracket p + \rho \vc{u}(u_n-\mathcal{U}) \rrbracket =0, \label{eq:mom_jump} \\
\llbracket (u_n-\mathcal{U})\left(\rho e+\frac{\rho |\vc{u}-\mathcal{U}\vc{n}|^{2}}{2}  + p\right) + J_{qn} \rrbracket =0, \label{eq:en_jump}
\end{align}
\end{subequations}
where $\rho$, $Y_k$, $e$ are the mixture density, the mass fraction of component $k$, and the mixture internal energy. $u_n$, $\mathcal{U}$,  $J_{qn}$ are normal components to the interface $C$ of the particle velocity $\vc{u}$, the interface velocity, and the heat flux, respectively.  The operator $\llbracket \phi \rrbracket = \phi_2 - \phi_1$ with the subscripts $1/2$ denoting the variables on the left/right of the discontinuity, or the states of the 1/2-fluid.

\begin{figure}[htbp]
\centering
\subfloat[Equilibrium temperatures]{\label{Tjump:eq}\includegraphics[width=0.25\textwidth]{./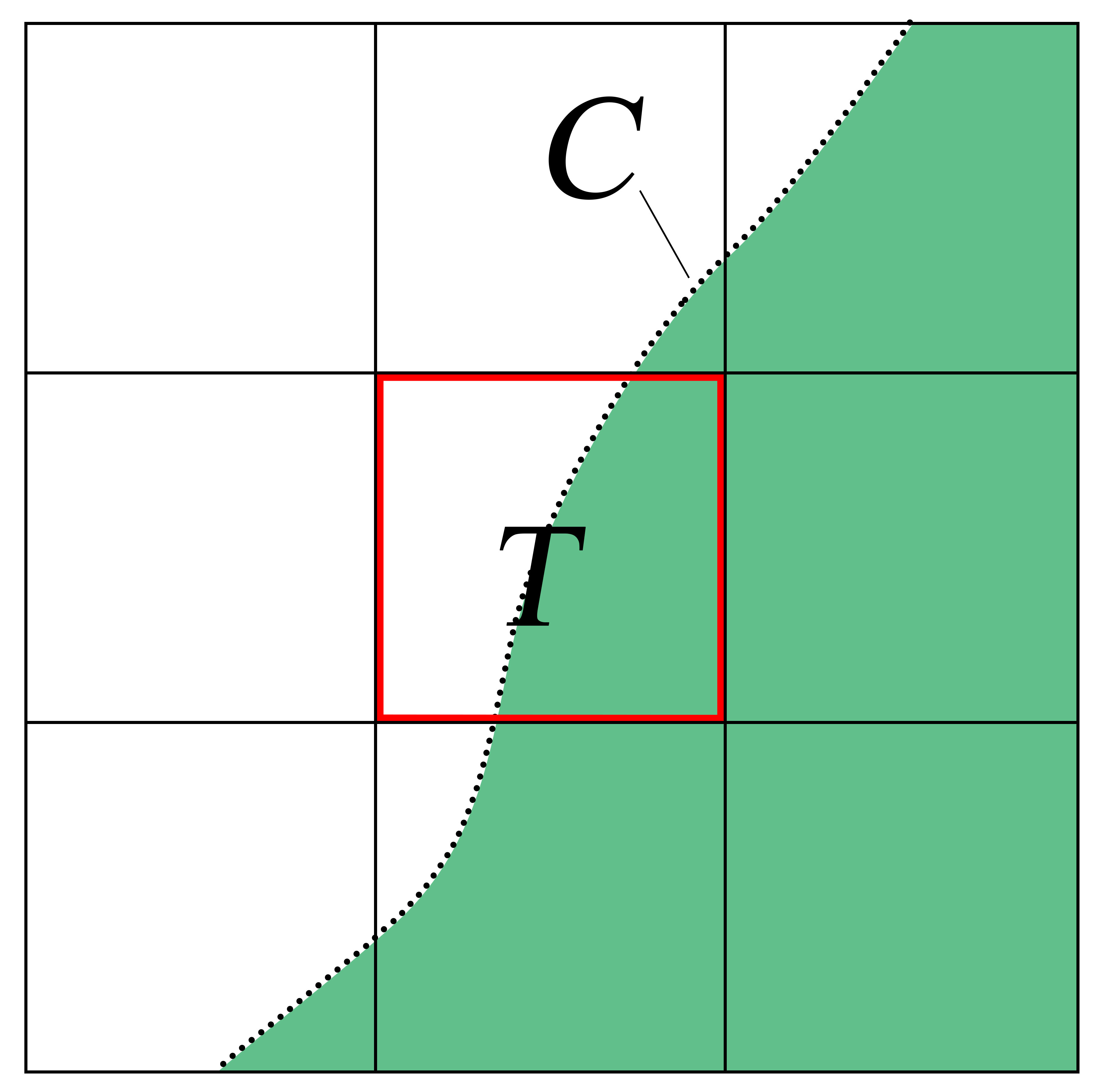}} \quad\quad
\subfloat[Disequilibrium temperatures]{\label{Tjump:ineq}\includegraphics[width=0.25\textwidth]{./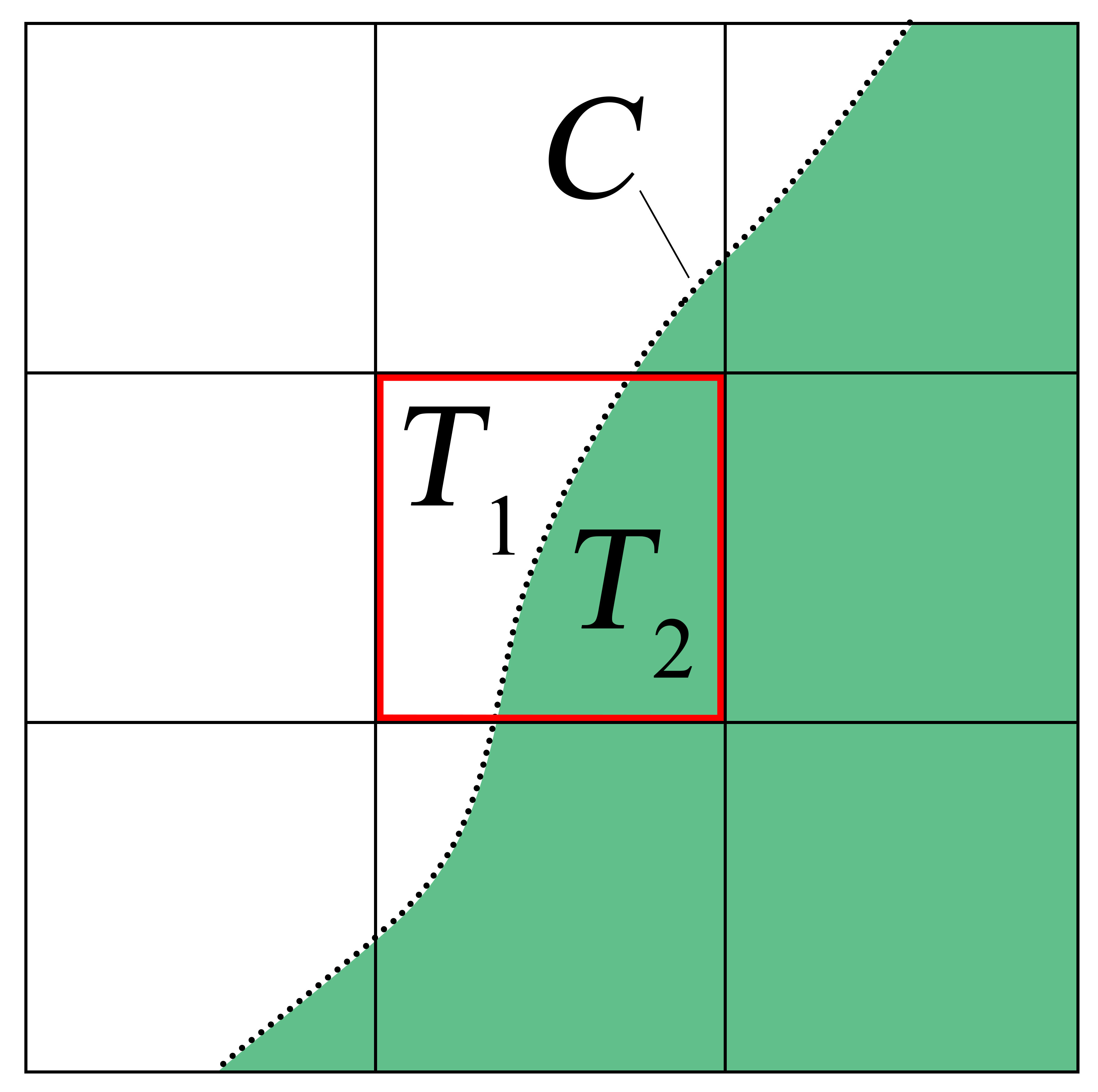}}\quad\quad
\subfloat[Miscible interface]{\label{fig:mass_diff_interface}\includegraphics[width=0.25\textwidth]{./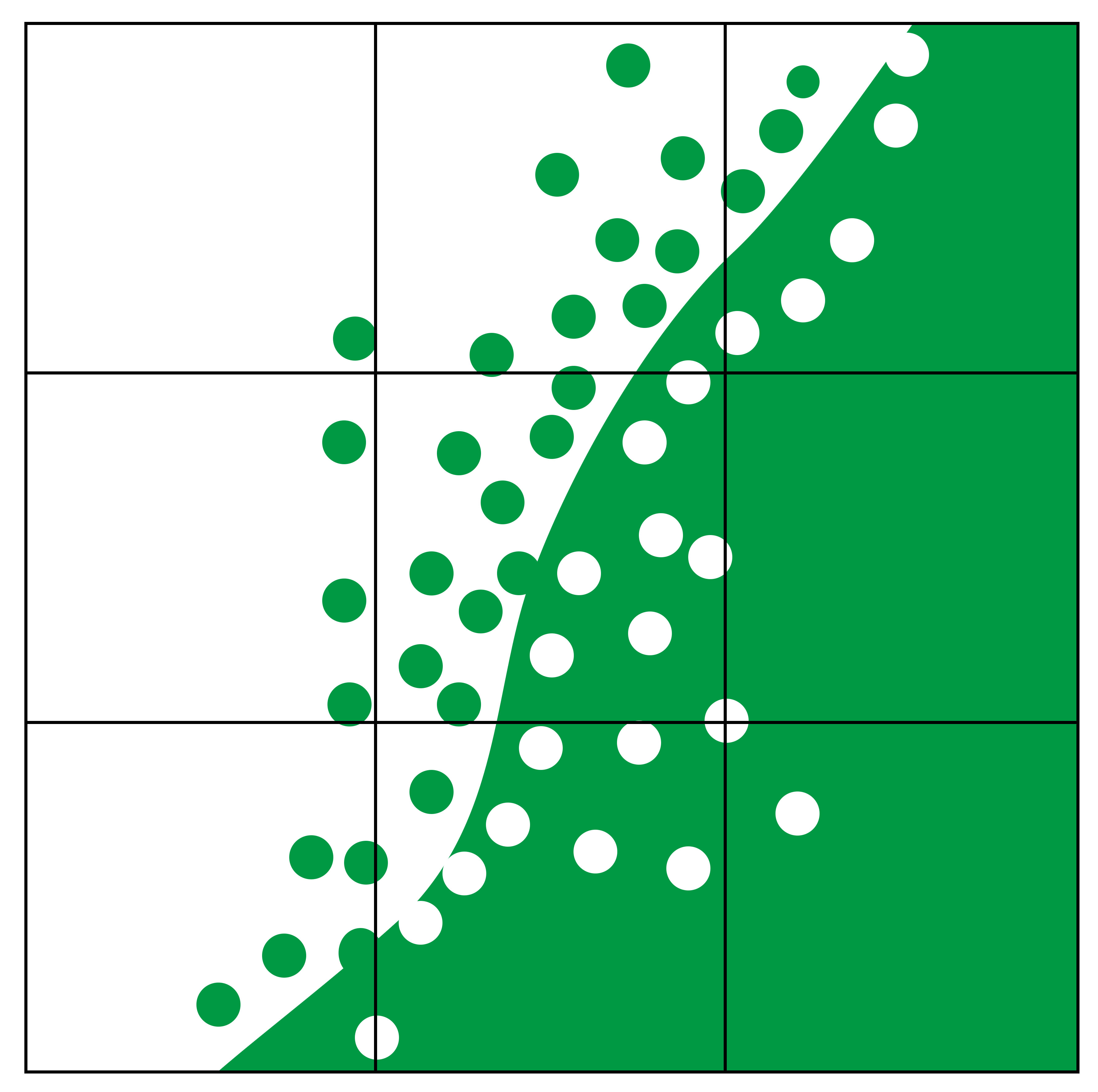}}
\caption{Interfaces on computational grid.}
\label{fig:Tjump} 
\end{figure}

The interface represents a contact discontinuity, across which velocity is continuous, i.e., $  u_{n1} = u_{n2} = \mathcal{U}$. From \cref{eq:mom_jump,eq:en_jump} follows $\llbracket p \rrbracket = 0$ and $\llbracket J_{qn} \rrbracket = 0$, respectively. With the Fourier's heat flux, the latter can be expressed as:
\begin{equation}\label{eq:heat_flux_jump}
  \left( \lambda_1 \nabla{T_1} \right) \dpr \vc{n} = \left( \lambda_2 \nabla{T_2} \right) \dpr \vc{n},
\end{equation}
where $\lambda_k$ is the heat conduction coefficient of component $k$. 

In the framework of the DIM, the material interface contained within a computational cell is not tracked and the properties of fluids are diffused within the interfacial zone. Thermal conductivity imposes temperature continuity across the interface. If the model is equipped with only one temperature, then the two materials inside a computational cell share the same temperature (i.e., $T_1 = T_2 = T$, \Cref{Tjump:eq}), which maybe inconsistent with \cref{eq:heat_flux_jump}. This inconsistency results in numerical errors of FVM or physical inconsistency in the vicinity of the interface. In fact, the equilibrium temperature assumption means that temperature relaxation rate is infinitely large so that phase temperature equilibrium is reached instantaneously. This assumption deprives the model of the ability to deal with physically finite relaxation rates.

To get rid of the above-mentioned temperature inconsistency, we turn to the temperature-disequilibrium models. In such models each cell state is characterized with temperatures for each component (\Cref{Tjump:ineq}), and thus the temperature equilibrium is  not enforced. 
One representative of such models is the Baer-Nuziato (BN) model \cite{BAER1986861} and its variant \cite{saurel1999multiphase} for compressible multiphase flows. Formally, the BN model include balance equations for the partial density, the phase momentum and the phase total energy and is argumented with an evolution equation for volume fraction. The latter is vital for maintaining the free-of-oscillation property at the interface. In this model, each component is described with a full set of parameters (density, velocity, pressure and temperature) and governed by the single-phase Navier-Stokes (NS) equations away from the interface. The interactions between components only happen in the neighbourhood of the interface. These interactions include the kinetic, mechanical and thermal relaxations that strive to erase the disequilibria in velocity, pressure and temperature. The temperature equilibrium (imposed in one-temperature DIMs) is reached only after the complete temperature relaxation. The characteristic relaxation rates depend on particular physical problems.  The model is unconditionally hyperbolic and respect the laws of thermodynamics. Moreover, in DIM the phase temperatures exist all through the computational domain since even the pure fluid is approximated as a fluid with negligible amount of other components. The disequilibrium temperature closure in each computational cell does not introduce inconsistency with the heat flux jump conditions.

Although the BN model is physically complete, it consists of complicated wave structure and stiff relaxation processes, making its numerical implementation quite cumbersome. For many application scenarios, the model can be simplified to a large extent. For example, for the DDT process, Kapila et al. have derived two reduced models in the limit of instantaneous mechanical relaxations \cite{kapila2001two}. The first model is obtained in the limit of instantaneous velocity relaxation and consists of six equations. Only one equilibrium velocity exists in this model. This formulation is used for solving Kapila's five-equation model in \cite{saurel2009simple} for improving robustness. The second is the well-known five-equation model and derived in the limit that both pressure and velocity relaxation rates approach infinity (\Cref{fig:model_schematic}).

\begin{figure}[htbp]
\centering
\includegraphics[width=0.8\textwidth]{./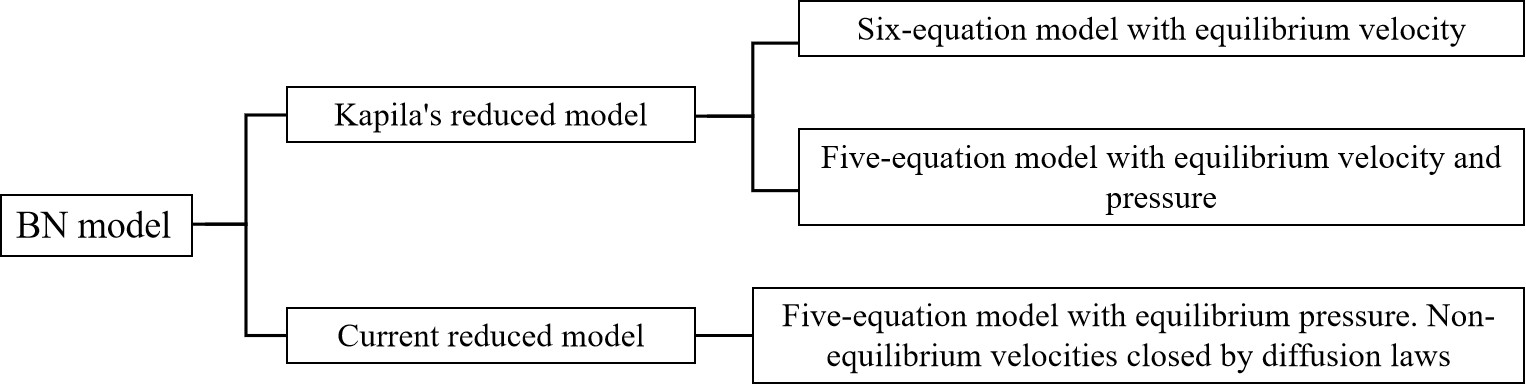}
\caption{Schematic of reduced models.}
\label{fig:model_schematic} 
\end{figure}

Most current DIM works in literature focus on the resolved interface problem, however,  the components maybe miscible and penetrate into each other (\Cref{fig:mass_diff_interface}). Under such circumstances, there is no longer a definite sharp discontinuous material interface, but a a physically diffused zone with finite thickness. This diffused zone develops as a result of mass diffusion and enthalpy diffusion. The significance of the latter to maintain the thermodynamical consistency has been demonstrated in \cite{Cook2009Enthalpy}. In fact, the enthalpy diffusion appear as a result of replacing the phase velocities with the mass-weighted one for any multi-velocity model, including the BN model.

From a microscopic point of view, mass diffusion is the result of molecular random motions, which drives the molecular distribution toward uniformity. In the macroscopic continuum mechanics where each spatial element contains enormous number of molecules, the mass diffusion flux is characterized by the velocity difference between different species. Thus, the one-velocity models of Kapila is incapable of modelling this process. When the mass diffusion effect is strong, the velocity relaxation time scale is comparable to that of the problem considered and the assumption of instantaneous velocity relaxation is inappropriate. Therefore, to model the mass diffusion, we have to retain the velocity disequilibrium and then  close it with diffusion laws (\Cref{fig:model_schematic}), which is one of the major contributions of the present paper.

The reduction procedures in \cite{murrone2005five,perigaud2005compressible} assume the same relaxation time scale for velocity and pressure. This approach inevitably leads to velocity-equilibrium models where no mass diffusion sustains.  Instead, weighing  simplicity and thermodynamic consistency, we assume  different time scales for the velocity and pressure. The pressure relaxation time scale is much smaller than that of the velocity. Such an assumption is in fact supported many physics, for example, see \cite{kapila2001two,bilicki1996evaluation,guillard2006numerical,zein2010}. With this assumption and the diffusion laws, the seven-equation BN model can be reduced to a five-equation one without losing the ability to model the mass diffusion process. 

In reducing the velocity-disequilibrium model, the second order term of the velocity relaxation time is abandoned. The finally obtained model formally contains one velocity. However, this velocity is the mass-weighted mixture velocity rather than the equilibrium velocity in Kapila's model.  The component velocities can be derived by invoking diffusion laws. The finally obtained model satisfy both of the two criteria (physical admissibility and numerical consistency) proposed above.


We propose numerical methods for solving the model with the fractional step method. In numerical implementation, the model is split into five physical parts, i.e., the hydrodynamic part, the viscous part, the temperature relaxation part, the heat conduction part and the mass diffusion part. The first part formally coincides with the original Kapila's formulation, however, the velocity owns different physical meanings (as discussed in the last paragraph). Various Godunov FVM methods in literature (for example, see \cite{Coralic2014Finite,perigaud2005compressible,kreeft2010new,Zhangchao2020}) can be used to solve the governing equations for the hydrodynamic part. In the first two parts the component temperatures are in disequilibrium.  The temperature relaxation takes place at a much larger time scale and solved separately after these parts. The diffusion processes (including viscous dissipation, heat conduction and mass diffusion processes) are governed by parabolic PDEs, that are solved with the locally iterative method based on the Chebyshev parameters \cite{Zhukov2010,zhukov2018}.
The heat conduction and mass diffusion equations are solved maintaining the temperature equilibrium, i.e., assuming an instantaneous temperature relaxation.  Note that finite temperature relaxation can also be considered straightforwardly.

The rest of the present paper is organized as follows. In \Cref{sec:model} we derive the reduced model by performing the asymptotic analysis on the BN-type model in the limit of instantaneous mechanical relaxations. In \Cref{sec:numer_meth} we develop numerical methods for solving the proposed model. In \Cref{sec:numer_res} we present numerical  results for several multicomponent problems with diffusions and apply the model and numerical methods to the laser ablation  problem in the field of ICF.

\section{Model formulation}
\label{sec:model}
\subsection{The BN-type seven-equation model}
The starting point of the following model formulation is the complete  BN-type seven-equation model \cite{BAER1986861,saurel1999multiphase,petitpas2014,perigaud2005compressible}, which can be derived by using the averaging procedure of \cite{drew1983mathematical}. It reads:
\begin{subequations} \label{eq:bn}
\begin{align}
\label{eq:bn:mass}
\dudx{\alpha_k\rho_k}{t} + \nabla\dpr(\alpha_k\rho_k\vc{u}_k) = 0, \\
  \label{eq:bn:mom}
  \dudx{\alpha_k\rho_k\vc{u}_k}{t} + \nabla\dpr\left(\alpha_k\rho_k\vc{u}_k\tpr\vc{u}_k - \alpha_k \overline{\overline{T}}_k \right) =  -  \overline{\overline{T}}_I \dpr \nabla {\alpha_k}  + \mathcal{M}_k,\\
  \label{eq:bn:en}
  \dudx{\alpha_k \rho_k E_k}{t} + 
  \nabla\dpr\left(
\alpha_k \rho_k E_k \vc{u}_k   - \alpha_k \overline{\overline{T}}_k \dpr \vc{u}_k
  \right) 
  = 
  - \vc{u}_{I} \dpr \left( \overline{\overline{T}}_I \dpr \nabla {\alpha_k} \right) 
    + \vc{u}_I \mathcal{M}_k 
- p_I \mathcal{F}_k + \mathcal{Q}_k + q_k + \mathcal{I}_k,\\
   \label{eq:bn:vol}
  \dudx{\alpha_k }{t} + 
  \vc{u}_I \dpr \nabla\alpha_k = \mathcal{F}_k,
\end{align}
\end{subequations}
where the notations used are standard: $\alpha_k, \; \rho_k, \; \vc{u}_k, \; p_k, \; \overline{\overline{T}}_k, \; E_k$ are the volume fraction, phase density, velocity, pressure, stress tensor, and total energy of phase $k$. For the sake of clarity we constrict our discussions within the scope of two-phase flows, $k=1,2$. The phase density $\rho_k$ is defined as the mass per unit volume occupied by $k$-th phase. The mixture density $\rho$ is the sum of the partial densities $\alpha_k \rho_k$, i.e., $\rho = \sum {\alpha_k \rho_k}$. 
The last equation \cref{eq:bn:vol} is written for only one component thanks to the saturation constraint for volume fractions 
$\sum_{k=1}^2 \alpha_k  = 1$.
The total energy is $E_k  = e_k + \mathcal{K}_k$ where $e_k$, and $\mathcal{K}_k = \frac{1}{2}\vc{u}_k \dpr \vc{u}_k$ are the  internal energy and kinetic energy, respectively. 

The variables with the subscript ``I'' represent the variables at interfaces, for which there are several possible definitions \cite{saurel1999multiphase,perigaud2005compressible,saurel2018diffuse}. 
Here we choose the following
\begin{equation}
\vc{u}_I = \overline{\vc{u}} = {\sum y_k \vc{u}_k }, \quad p_I = \sum \alpha_k p_k, \quad \overline{\overline{T}}_{I} = - p_I \overline{\overline{I}} +  \overline{\overline{\tau}}_I.
\end{equation}
where $y_k$ denotes the mass fraction $y_k  = \alpha_k \rho_k / \rho$, and $\overline{\vc{u}}$ is the mass-fraction weighted mean velocity. The interfacial stress $\overline{\overline{\tau}}_I$ is defined in such way that the thermodynamical laws are respected. 

The inter-phase exchange terms include the velocity relaxation $\mathcal{M}_k$, the pressure relaxation $\mathcal{F}_k$, and the temperature relaxation $\mathcal{Q}_k$. They are as follows:
\begin{equation}\label{eq:relaxations}
\begin{split}
\mathcal{M}_k=\vartheta\left( \vc{u}_{k^*}-\vc{u}_k \right), \quad \mathcal{F}_k=\eta\left( {p}_{k} - {p}_{k^*} \right), \quad \mathcal{Q}_k=\varsigma\left( T_{k^*} - T_{k} \right).
\end{split}
\end{equation}
where $k^{*}$ denotes the conjugate component of the $k$-th component,  i.e., $k=1,\; k^{*} =2$ or $k=2,\; k^{*} =1$. The relaxation velocities are all positive $\vartheta > 0, \; \eta > 0, \; \varsigma > 0$.

The phase stress tensor, $\overline{\overline{T}}_k$, can be written as
\begin{equation}
 \overline{\overline{T}}_k  = - p_k \overline{\overline{I}} +  \overline{\overline{\tau}}_k.
\end{equation}

For the viscous part we use the  Newtonian approximation
\begin{equation}\label{eq:newton_vis}
\overline{\overline{\tau}}_k = 2\mu_k \overline{\overline{D}}_k + \left(\mu_{b,k} - \frac{2}{3}\mu_k \right) \nabla \dpr \vc{u}_k,
\end{equation}
where $\mu_k > 0$ is the coefficient of shear viscosity and  $\mu_{b,k} > 0$ is the coefficient of bulk viscosity. 

The deformation rate $\overline{\overline{D}}_k$ is
\[
\overline{\overline{D}}_k = \frac{1}{2} \left[ \nabla \vc{u}_k + \left(  \nabla \vc{u}_k \right)^{\text{T}} \right] = \overline{\overline{D}}_a + \overline{\overline{D}}_{wk},
\]
where the average part 
\[
\overline{\overline{D}}_a = \frac{1}{2} \left[ \nabla \overline{\vc{u}} + \left(  \nabla \overline{\vc{u}} \right)^{\text{T}} \right].
\]
the diffusion part
\[
\overline{\overline{D}}_{wk} = \frac{1}{2} \left[ \nabla \vc{w}_k + \left(  \nabla \vc{w}_k \right)^{\text{T}} \right],
\]
where the diffusion velocity $\vc{w}_k$ is defined as
\begin{equation}
\vc{w}_k = \vc{u}_k - \overline{\vc{u}} .
\end{equation}

With the definition of $\overline{\overline{D}}_{a}$ and $\overline{\overline{D}}_{wk}$, we can further split $\overline{\overline{\tau}}_k$ into the average and diffusion parts:
\begin{subequations}\label{eq:stress_disp}
\begin{align}
\overline{\overline{\tau}}_{ak} = 2\mu_k \overline{\overline{D}}_a + \left(\mu_{b,k} - \frac{2}{3}\mu_k \right) \nabla \dpr \overline{\vc{u}},\\
\overline{\overline{\tau}}_{wk} = 2\mu_k \overline{\overline{D}}_{wk} + \left(\mu_{b,k} - \frac{2}{3}\mu_k \right) \nabla \dpr {\vc{w}}_k.
\end{align}
\end{subequations}

The heat conduction term is
\begin{equation}\label{eq:q}
q_k = - \nabla \dpr \vc{J}_{qk},
\end{equation}
where the Fourier heat flux is
\begin{equation}\label{eq:fourier_flux}
\vc{J}_{qk} = - \alpha_k \lambda_k \nabla T_k
\end{equation}

By performing the averaging procedure of Drew et al.\cite{drew1983mathematical}, one can derive the external energy source
\begin{equation}\label{eq:I}
\mathcal{I}_k = \alpha_k I_k,
\end{equation}
where $I_k$ denotes the the intensity of the external heat source released in the $k$-th phase, $I_k (\vc{x},t) \geq 0$.

Without the diffusion and relaxation processes, the seven-equation model is unconditionally hyperbolic with the following set of wave speeds $u_k \pm a_k, u_k, u_I$, where $a_k$ is the sound speed 
\begin{equation}
a_k ^2 = \left( \dudx{p_k}{\rho_k} \right)_{s_k} = \frac{\frac{p_k}{\rho_k^2} - \left( \dudx{e_k}{\rho_k} \right)_{p_k} }{\left( \dudx{e_k}{p_k} \right)_{\rho_k}} > 0.
\end{equation}

\begin{remark}
For the sake of objectivity, the constitutive relation for the interfacial stress $\overline{\overline{\tau}}_I$ may depend on the following list of frame-invariant variables
\[ \alpha, \;\; \text{D}_{k} \alpha / \text{D} t, \;\; \nabla \alpha, \;\; \mathbf{u}_{1}-\mathbf{u}_{2}, \;\; \text{D}_{2} \mathbf{u}_{1} / \text{D} t-\text{D}_{1} \mathbf{u}_{2} / \text{D} t, \;\; \overline{\overline{D}}_{1}, \; \overline{\overline{D}}_{2}, \;\; \nabla\left(\mathbf{u}_{1}-\mathbf{u}_{2}\right), \]
where $\text{D}_k \cdot / \text{D} t$ is the material derivative defined in \cref{eq:mat_der}. 
We postulate that $\overline{\overline{\tau}}_I$ takes the following form
\begin{equation}\label{eq:tauI}
\overline{\overline{\tau}}_I = \mathcal{B} \left( \vc{u}_{k}-\vc{u}_{k*} \right) \nabla \alpha_k,
\end{equation}
where $\mathcal{B}>0$ is a function of the above objective variables. The term $\nabla \alpha_k$ acts as a ``Delta function-like'' vector that picks out the diffused interface zone. We will show that this definition of $\overline{\overline{\tau}}_I$ is consistent with the second law of thermodynamics under the temperature equilibrium.

In fact, our reduced model to be derived below only includes the mixture momentum equation, where $\overline{\overline{\tau}}_I$ is cancelled out. In general, $\overline{\overline{\tau}}_I$ has an impact on the variation of the volume fraction in the mass diffusion process. 
\end{remark}

\begin{remark}  
In \cref{eq:bn} we neglect the ``viscous pressure'' terms due to the pulsation damping of bubbles \cite{saurel2003multiphase,perigaud2005compressible}. These terms do not impact our model reduction in the limit of the instantaneous pressure relaxation. 
\end{remark}

\subsubsection{Equations for the primitive variables}
In this section, we derive some equations for some primitive variables, which are to be used for further analysis. We introduce the material derivative related to the phase velocity $\vc{u}_k$ and the interfacial velocity $\vc{u}_I$,
\begin{equation}\label{eq:mat_der}
 \frac{\text{D}_g \Phi}{\text{D} \Phi} = \dudx{\Phi}{t} + \vc{u}_g \cdot \nabla{\Phi}, \;\; g = k, I.
\end{equation} 

We also present some thermodynamical relations to be used below:
\begin{equation}\label{eq:gibbs}
T_k \frac{\text{D}_k s_k}{\text{D} t} = \frac{\text{D}_k e_k}{\text{D} t} - \frac{p_k}{\rho_k^2} \frac{\text{D}_k \rho_k}{\text{D} t}, \;\;\frac{\text{D}_k e_k}{\text{D} t} = \chi_k \frac{\text{D}_k \rho_k}{\text{D} t} + \xi_k \frac{\text{D}_k p_k}{\text{D} t}, \;\;
\frac{\text{D}_k p_k}{\text{D} t} = a_k^2 \frac{\text{D}_k \rho_k}{\text{D} t} + \omega_k \frac{\text{D}_k s_k}{\text{D} t},
\end{equation}
where the first expression is the Gibbs relation, $s_k$ is the phase entropy, \[\chi_k = \dudx{e_k}{\rho_k}\Big|_{p_k}, \; \xi_k = \dudx{e_k}{p_k}\Big|_{\rho_k}, \;\omega_k = \dudx{p_k}{s_k}\Big|_{\rho_k},\]
simple manipulations of \cref{eq:gibbs} lead to  $\chi_k = p_k/\rho_k^2 - \xi_k a_k^2$.

 The Mie-Gr{\"u}neisen coefficient $\Gamma_k$ is defined as
 \begin{equation}\label{eq:Gam}
     \Gamma_k = \frac{1}{\rho_k} \dudx{p_k}{e_k}\Big|_{\rho_k} = \frac{1}{\rho_k \xi_k}.
 \end{equation}
 
 With the aid of \cref{eq:Gam}, we reformulate the second relation in \cref{eq:gibbs} as follows
 \begin{equation}\label{eq:dedrhodp}
     \Gamma_k \frac{\text{D}_k e_k}{\text{D} t} = \left( \overline{\gamma}_k - \Gamma_k \right) p_k \frac{\text{D}_k v_k}{\text{D} t} + v_k \frac{\text{D}_k p_k}{\text{D} t},
 \end{equation}
 where the specific volume $v_k = 1/\rho_k$, the adiabatic exponent $\overline{\gamma}_k = A_k / p_k$ and $A_k = \rho_k a_k^2$.

By performing a procedure similar to that in \cite{murrone2005five,zhangPHD2019}, we can deduce the equations with respect to the primitive variables as follows:
\begin{subequations} \label{eq:bn_prim}
\begin{align}
\alpha_{k} \rho_{k} T_{k} \frac{\mathrm{D}_{k} s_{k}}{\mathrm{D} t} = \left(\boldsymbol{u}_{I}-\boldsymbol{u}_{k}\right) \cdot \mathcal{M}_{k} + \left(p_{k}-p_{I}\right) {\mathcal{F}}_{k}+\left(p_{I}-p_{k}\right)\left(\boldsymbol{u}_{I}-\boldsymbol{u}_{k}\right) \cdot \nabla \alpha_{k} \nonumber\\  + \left( \vc{u}_k -\vc{u}_I \right) \cdot \left(  \overline{\overline{\tau}}_I \cdot \nabla \alpha_k \right) + \mathcal{G}_k \label{eq:sk}\\
\alpha_{k} \rho_{k} \frac{\mathrm{D}_{k} \boldsymbol{u}_{k}}{\mathrm{D} t} = \nabla\cdot \left( \alpha_k \overline{\overline{T}}_k \right) - \overline{\overline{T}}_I \cdot \nabla \alpha_{k} + \mathcal{M}_{k} \label{eq:uk}\\
\frac{\mathrm{D}_{k} p_{k}}{\mathrm{D} t}= -\frac{\rho_{k} a_{Ik}^{2}}{\alpha_{k}} \mathcal{F}_{k} + \frac{\vc{u}_k - \vc{u}_I}{\alpha_k \rho_k \xi_k} \left[ \left( \overline{\overline{\tau}}_I - \xi_k \rho_k^2 a_{Ik}^2 \overline{\overline{I}} \right) \cdot \nabla\alpha_k - \mathcal{M}_k \right]  + \frac{\mathcal{G}_k }{ \alpha_k \rho_k \xi_k} - \rho_k a_k^2 \nabla\cdot \vc{u}_k \label{eq:pk}\\ 
\frac{\mathrm{D}_{I} \alpha_{k}}{\mathrm{D} t} = \mathcal{F}_{k} \label{eq:alpk}
\end{align}
\end{subequations}
where 
\begin{equation}\label{eq:Gk}
    \frac{\rho_{k} a_{Ik}^{2}}{\alpha_{k}} = \frac{\rho_{k} a_{k}^{2}}{\alpha_{k}} + \frac{p_{I}-p_{k}}{\alpha_{k} \rho_{k} \xi_{k}}, \;\; \mathcal{G}_k = \alpha_k \overline{\overline{\tau}}_k : \overline{\overline{D}}_k + \mathcal{Q}_k + q_k + \mathcal{I}_k.
\end{equation}

One can check that omitting the dissipative terms, the above equations (\cref{eq:sk,eq:uk,eq:pk,eq:alpk}) can be reduced to those in \cite{murrone2005five}.


\subsubsection{Equations for the mixture}\label{eq:mixeqns}
In what follows, we derive some average balance equations by replacing the phase velocities with the mass fraction weighted one.
 
\paragraph{The mixture continuity equation} 

\Cref{eq:bn:mass} can be rewritten as:
\begin{equation}\label{eq:mass_diff}
\dudx{\alpha_k\rho_k}{t} + \nabla\dpr(\alpha_k\rho_k \overline{\vc{u}}) = - \nabla\dpr \vc{J}_k,
\end{equation}
where the diffusion flux $\vc{J}_k$ is defined as
\begin{align}\label{eq:Jk}
\vc{J}_k = \alpha_k \rho_k \vc{w}_k.
\end{align}

Note that
\begin{equation}\label{eq:sumJ}
\sum \vc{J}_k = \sum\alpha_k \rho_k \vc{w}_k = 0.    
\end{equation}

In literature there exist some simplified closure relations for the diffusion velocity $\vc{w}_k$ of binary mixtures  \cite{williams1985}, for example:
\begin{enumerate}
    \item[(1)] the Fick's law
\begin{equation}\label{eq:fick_law}
\vc{w}_k = - D \frac{\nabla y_k}{y_k},
\end{equation}
    \item[(2)] the Stefan-Maxwell equation 
\begin{equation}\label{eq:stefan}
\vc{\nabla} X_{k}=\sum_{j=1}^{2} \frac{X_{k} X_{j}}{D}\left(\vc{w}_{j}-\vc{w}_{k}\right).
\end{equation}
\end{enumerate}

Here,  $X_{k}$ is the mole fraction of the component $k$. One can solve  diffusion velocities in \cref{eq:stefan} by using \cref{eq:sumJ}.

The parameter $D$ is the binary diffusion coefficient, for ideal gases, 
\begin{equation}
    D = D_{ij} = \frac{3 \overline{W} k^{0} T}{16 \rho \mu_{ij} \Omega_{ij}},
\end{equation}
where $\overline{W}$ is the average molecular weight of the mixture, $k^{0}$ is the Boltzmann's constant, $\mu_{ij}$ is the reduced mass $\mu_{ij} = M_i M_j / \left( M_i + M_j  \right)$, $M_i$ and $M_j$ are the masses of the colliding molecules, $\Omega_{ij} = \sigma_{ij} v_{ij} /4$ is the collision integral. $v_{ij} = \sqrt{8k^{0}T/\pi \mu_{ij}}$ is the Maxwellian velocity distribution. From this equation follows $D \sim T^{3/2}/p$.


Summing \cref{eq:mass_diff} leads to the equation for the mixture density
\begin{equation}
\dudx{\rho}{t} + \nabla\dpr(\rho \overline{\vc{u}}) = 0.
\end{equation}

\paragraph{The mixture momentum equation} 

Summing \cref{eq:bn:mom}, one can obtain the equation for the mixture momentum
\begin{equation}\label{eq:mix_mom}
\dudx{\rho \overline{\vc{u}}}{t} + \nabla\dpr\left(\rho \overline{\vc{u}} \tpr \overline{\vc{u}} - \overline{\overline{T}} \right) =  - \nabla\dpr\left(\vc{J}_k \tpr {\vc{w}_k} \right),
\end{equation}
where $\overline{\overline{T}} = \sum \alpha_k \overline{\overline{T}}_k = - \overline{P} \; \overline{\overline{I}} + \sum \alpha_k {\overline{\overline{\tau}}}_k, \;\; \overline{P} = \sum \alpha_k p_k $. 

We further separate the stress tensor into average and velocity-disequilibrium parts in the following way
\begin{subequations}\label{eq:newton_vis_disp}
\begin{align}
\sum \alpha_k {\overline{\overline{\tau}}}_k =  {\overline{\overline{\tau}}} + \sum \alpha_k {\overline{\overline{\tau}}}_{wk},
\label{eq:newton_vis_av}\\
{\overline{\overline{\tau}}} = \sum \alpha_k \overline{\overline{\tau}}_{ak} = 2\mu \overline{\overline{D}}_a + \left(\mu_{b} - \frac{2}{3}\mu \right) \nabla \dpr \overline{\vc{u}}, \\ \mu = \sum \alpha_k \mu_k, \;\; \mu_b = \sum \alpha_k \mu_{b,k}.
\end{align}
\end{subequations}

Thus, \Cref{eq:mix_mom} can be recast as follows:
\begin{equation}\label{eq:mix_mom1}
\dudx{\rho \overline{\vc{u}}}{t} + \nabla\dpr\left(\rho \overline{\vc{u}} \tpr \overline{\vc{u}} + \overline{P}\; \overline{\overline{I}}  - \overline{\overline{\tau}} \right) = \sum  \left[ \nabla\dpr \left( \alpha_k \overline{\overline{\tau}}_{wk} \right) - \nabla\dpr\left(\vc{J}_k \tpr {\vc{w}_k} \right)  \right],
\end{equation}

\paragraph{The mixture energy equation} 

And the summation of \cref{eq:bn:en} leads to 
\begin{equation}\label{eq:mix_en}
  \dudx{ \rho E}{t} + 
  \nabla\dpr\left(
\rho E \overline{\vc{u}}   - \overline{\overline{T}} \dpr \overline{\vc{u}}
  \right) 
  = - \nabla \dpr \sum  E_k \vc{J}_k +  \nabla \dpr \sum \alpha_k \overline{\overline{T}}_k \dpr \vc{w}_k + \sum q_k + \sum {\mathcal{I}}_k,
\end{equation}
where the mixture total energy is 
\begin{equation}\label{eq:rhoE}
      \rho E = \sum \alpha_k \rho_k E_k 
      =\rho e + \rho \frac{{|\overline{\vc{u}}}|^2}{2}   + \sum \alpha_k \rho_k \frac{|\vc{w}_k |^2}{2}, \quad \rho e = \sum \alpha_k \rho_k  e_k.
\end{equation}

Note that
\begin{equation}
\sum  E_k \vc{J}_k = \sum \left( e_k + \frac{ \left( \overline{\vc{u}} + \vc{w}_k \right) \dpr  \left( \overline{\vc{u}} + \vc{w}_k \right) }{2} \right)\vc{J}_k  = \sum e_k \vc{J}_k  + \sum \vc{J}^{ww}_{k} + \sum \vc{J}^{uw}_{k},
\end{equation}
\[\vc{J}^{ww}_{k} = \frac{1}{2} |\vc{w}_k|^2 \vc{J}_k, \;\; \vc{J}^{uw}_{k} =  \left( \overline{\vc{u}} \dpr \vc{w}_k \right) \vc{J}_k,\]
and 
\begin{align}
\sum \alpha_k \overline{\overline{T}}_k \dpr \vc{w}_k = \sum \alpha_k \overline{\overline{\tau}}_k \dpr \vc{w}_k - \sum \frac{p_k}{\rho_k} \vc{J}_k, \label{eq:Jk_vis}
\end{align}
thus, with the aid of \cref{eq:newton_vis_disp}, \cref{eq:mix_en} can be recast as
\begin{equation}\label{eq:mix_en1}
  \dudx{ \rho E}{t} + 
  \nabla\dpr\left(
\rho E \overline{\vc{u}} + \overline{P} \; \overline{\vc{u}}  - \overline{\overline{\tau}} \dpr \overline{\vc{u}}
  \right) 
  =  - \nabla \dpr \sum \left( \vc{J}^{h}_{k} + \vc{J}^{vis}_k +  \vc{J}^{ww}_{k} + \vc{J}^{uw}_{k}\right) + \sum q_k + \sum {\mathcal{I}}_k,
\end{equation}
where  $h_k = e_k + p_k/\rho_k$ is the phase enthalpy,
\begin{equation}\label{eq:Jhk}
\vc{J}^{h}_{k} = h_k \vc{J}_k
\end{equation}

The term $\vc{J}^{h}_{k}$ is hereby termed as the enthalpy diffusion flux. 

\begin{equation}
\vc{J}^{vis}_k = - \alpha_k \overline{\overline{\tau}}_{wk} \dpr \overline{\vc{u}} - \alpha_k \overline{\overline{\tau}}_{ak} \dpr \vc{w}_k - \alpha_k \overline{\overline{\tau}}_{wk} \dpr \vc{w}_k,
\end{equation}
is the viscous diffusion flux. Note that the first term comes from the term $-\overline{\overline{T}}\dpr\overline{\vc{u}}$ on the left of \cref{eq:mix_en}, and the last two terms come from the decomposition of the viscous part of \cref{eq:Jk_vis}.


In summary, we have obtained the mixture balance equations for mass (\cref{eq:mass_diff}), momentum (\cref{eq:mix_mom1}), and energy(\cref{eq:mix_en1}). The left side of these equations takes the same form as the single phase NS equation.

\paragraph{The mixture entropy equation}

We define the mixture entropy by assuming the additivity of phase entropies,
\begin{equation}
\rho s = \sum \alpha_k \rho_k s_k,
\end{equation}
and the mixture material derivative along the streamline of phases
\begin{equation}\label{eq:mixture_mat_der}
\rho \frac{\mathrm{D}_{m} \Phi}{\mathrm{D}_{m} t} = \sum \left[ \dudx{\alpha_k \rho_k \Phi_k}{t} + \nabla \dpr \left(\alpha_k \rho_k \vc{u}_k \Phi_k \right)  \right] = \sum \alpha_k \rho_k \frac{\mathrm{D}_{k} \Phi_k}{\mathrm{D}_{k} t}.
\end{equation}

With \cref{eq:mixture_mat_der}, the mixture material derivative for the mixture entropy is
\begin{equation}\label{eq:mixture_mat_der1}
\rho \frac{\mathrm{D}_{m} s}{\mathrm{D}_{m} t} = \sum \alpha_k \rho_k \frac{\mathrm{D}_{k} s_k}{\mathrm{D}_{k} t} = \rho \frac{\mathrm{D}^{ex}_{m} s}{\mathrm{D}_{m} t} + \rho \frac{\mathrm{D}^{in}_{m} s}{\mathrm{D}_{m} t},
\end{equation}
where $\rho {\mathrm{D}^{ex}_{m} s}/{\mathrm{D}_{m} t}$ and $\rho {\mathrm{D}^{in}_{m} s}/{\mathrm{D}_{m} t}$ represent the external entropy flux and the internal entropy production, respectively. The entropy flux is
\begin{equation}
\rho \frac{\mathrm{D}^{ex}_{m} s}{\mathrm{D}_{m} t} = \sum \nabla\dpr \frac{\vc{q}_k}{T_k}.
\end{equation}

For an irreversible process, the entropy production should be non-negative.

\subsection{Reduction of the of the seven-equation model}
%

In the present work, we do not attempt to solve the complete seven-equation model, which involves complicated wave structure and stiff relaxations. Instead, we derive a simplified version of the seven-equation model in a manner similar to that of the derivation of the one-velocity one-pressure Kapila's model \cite{kapila2001two}. However, Kapila's five-equation model assumes instantaneous velocity equilibrium and pressure equilibrium. The former assumption strips this model of the capability to model the mass diffusion that is characterized by the velocity difference. 
To restore this ability, the velocity non-equilibrium is retained in the model presented below.

In our approach, the velocity non-equilibrium is reserved by assuming different time scales of the velocity relaxation and the pressure relaxation. We assume that the velocity relaxation time $\varepsilon_{u} = \varepsilon$ and the pressure relaxation time scale $\varepsilon_{p} \leq \mathcal{O}(\varepsilon^2)$.  The corresponding relaxation rates are $\vartheta = \frac{1}{\varepsilon}$ and $\eta \geq \mathcal{O} (\frac{1}{\varepsilon ^ 2})$, respectively.  
This assumption is adopted  on the basis of the following arguments:
\begin{enumerate}
    \item[(1)] We are interested in problems in presence of strong shocks such as detonation and ICF, where the surface tension is negligible.
    \item[(2)] As estimated in \cite{kapila2001two} for the DDT problem, the time scales for the velocity relaxation, the pressure relaxation, and the temperature relaxation are 0.1$\mu$s, 0.03$\mu$s, and 18 ms, respectively. In this problem, the time scale for the pressure relaxation  is approximately one order smaller than that of the velocity relaxation. Moreover, large amount of physical evaluations show that in common cases $ 0 \sim \varepsilon_p < \varepsilon_u < \varepsilon_T < \varepsilon_g$ \cite{guillard2006numerical,bilicki1996evaluation,petitpas2009modelling,zein2010}, where $\varepsilon_T$ and $\varepsilon_g$ are the relaxation time for temperature and chemical potential, respectively.
\end{enumerate}

The reduction of the seven-equation model is to derive a limit model when $\varepsilon \to 0$ and $\vartheta \to \infty$, $\eta \to \infty$. The difference of our approach from Kapila's consists in  the following two aspects:
\begin{enumerate}
    \item[(1)] Different time scales are assumed for the pressure relaxation and the velocity relaxation, while the same time scale is used in deriving the Kapila's model \cite{murrone2005five,kapila2001two}.
    \item[(2)] The reduced model is an approximation of the seven-equation model after reserving terms to the order $\mathcal{O}(\varepsilon)$ and abandoning smaller terms, while  Kapila's model keeps terms of order $\mathcal{O}(1)$.
\end{enumerate}

Such assumptions and manipulations allow the velocity disequilibrium and thus can be used to model the mass diffusion.

\subsubsection{The pressure relaxation}
In this section we drive the phase pressures into equilibrium with the condition $\varepsilon_p  \to 0$.

The primitive form of the BN model (\cref{eq:bn_prim}) can be recast in the following vector form:
\begin{equation}
\frac{\partial \boldsymbol{U}}{\partial t}+\sum_{d=1}^{3} \vc{A}_{d}(\vc{U}) \frac{\partial \vc{U}}{\partial x_{d}}=\frac{1}{\varepsilon _p} \vc{H}(\vc{U})+\vc{R}(\vc{U}),
\end{equation}
where
$\vc{U}=\left[\begin{array}{lllllllllllllll}
s_{1} & s_{2} & \vc{u}_{1} & \vc{u}_{2} & p_{1} & p_{2} & \alpha_{1}
\end{array}\right]$, $\vc{H}(\vc{U})$ is a vector containing the pressure relaxation terms, $\vc{R}(\vc{U})$ is the right hand side terms containing the velocity relaxation and diffusion terms, $d$ is the dimension index.

Without loss of generality, we consider the following one-dimensional split form 
\begin{equation}\label{eq:1D_vecForm}
\frac{\partial \vc{U}}{\partial t}=\vc{L}(\vc{U})+\frac{1}{\varepsilon_p} \vc{H}(\vc{U}), \quad \vc{L}(\vc{U})=-\vc{A}(\vc{U}) \frac{\partial \vc{U}}{\partial x}+\vc{R}(\vc{U})
\end{equation}

Let us assume the following asymptotic expansion of the solution $\vc{U}$ in the vicinity of the equilibrium one $\vc{U}^{(0)}$:
\begin{equation}\label{eq:asympP}
\vc{U}=\vc{U}^{(0)}+ \varepsilon_p \vc{U}^{(1)} + \mathcal{O}\left(\varepsilon_p^{2}\right).
\end{equation}
where $ \varepsilon_p \vc{U}^{(1)}$ represents a fluctuation of order $\varepsilon_p$ in the neighbourhood of $\vc{U}^{(0)}$.

The functions $\vc{L}(\vc{U})$ and $\vc{H}(\vc{U})$ are regular enough to allow  Taylor series expansion, with the aid of which \cref{eq:1D_vecForm} becomes
\begin{equation}\label{eq:asymP_expansion}
\left[\frac{\partial \vc{U}^{(0)}}{\partial t}-\vc{L}\left(\vc{U}^{(0)}\right)-\frac{\partial \vc{H}(\vc{U})}{\partial \vc{U}}\left(\vc{U}^{(0)}\right) \cdot \vc{U}^{(1)}\right] -\frac{1}{\varepsilon_p} \vc{H}\left(\vc{U}^{(0)}\right)+\mathcal{O}(\varepsilon_p)=0
\end{equation}

In the order of $\mathcal{O}(1/\varepsilon_p)$, we have 
\begin{equation}\label{eq:orderep}
\vc{H}\left(\vc{U}^{(0)}\right)=0,
\end{equation}
which gives 
\begin{equation}\label{eq:epP}
p_1^{(0)} = p_2^{(0)}.
\end{equation}

Neglecting terms of order $\mathcal{O}(\varepsilon_p)$ and smaller ones, we have 
\begin{equation}\label{eq:order1}
\frac{\partial \vc{U}^{(0)}}{\partial t}-\vc{L}\left(\vc{U}^{(0)}\right)-\frac{\partial \vc{H}(\vc{U})}{\partial \vc{U}}\left(\vc{U}^{(0)}\right) \cdot \vc{U}^{(1)}=0.
\end{equation}

Combination of \cref{eq:orderep} and \cref{eq:order1} leads to 
\begin{subequations} \label{eq:bn_prim_1}
\begin{align}
\alpha_{k} \rho_{k} T_{k} \frac{\mathrm{D}_{k} s_{k}}{\mathrm{D} t} = \left(\vc{u}_{I}-\vc{u}_{k}\right) \cdot \mathcal{M}_{k} +  \left( \vc{u}_k -\vc{u}_I \right) \cdot \left(  \overline{\overline{\tau}}_I \cdot \nabla \alpha_k \right) + \mathcal{G}_k \label{eq:sk_1}\\
\alpha_{k} \rho_{k} \frac{\mathrm{D}_{k} \vc{u}_{k}}{\mathrm{D} t} = \nabla\cdot \left( \alpha_k \overline{\overline{T}}_k \right) - \overline{\overline{T}}_I \cdot \nabla \alpha_{k} + \mathcal{M}_{k} \label{eq:uk_1}\\
\frac{\mathrm{D}_{k} p_{k}}{\mathrm{D} t}= -\frac{\rho_{k} a_{Ik}^{2}}{\alpha_{k}} \mathcal{F}_{k}^{(1)} + \mathcal{W}_k + \frac{\mathcal{G}_k }{ \alpha_k \rho_k \xi_k} - \rho_k a_k^2 \nabla\cdot \vc{u}_k \label{eq:pk_1}\\ 
\frac{\mathrm{D}_{I} \alpha_{k}}{\mathrm{D} t} = \mathcal{F}_{k}^{(1)} \label{eq:alpk_1}
\end{align}
\end{subequations}
where 
\begin{equation}\label{eq:Wk}
  \mathcal{W}_k =  \frac{\vc{u}_k - \vc{u}_I}{\alpha_k \rho_k \xi_k} \left[ \left( \overline{\overline{\tau}}_I - \xi_k \rho_k^2 a_{Ik}^2 \overline{\overline{I}} \right) \cdot \nabla\alpha_k - \mathcal{M}_k \right].
\end{equation}

For simplicity the superscript ``(0)'' over the variables in \cref{eq:bn_prim_1,eq:Wk} is omitted. The terms including $\vc{U}^{(1)}$ are lumped into $\mathcal{F}_{k}^{(1)}$.

Summing \cref{eq:pk_1} over $k$, one can obtain 
\begin{equation}
\frac{\text{D}_{m} p}{\text{D}_{m} t} + A \nabla \dpr \vc{u} =  A \sum_{k} \frac{\Gamma_k \mathcal{G}_k}{A_k} + A \sum_{k} \frac{\alpha_k \mathcal{W}_k - \alpha_k \vc{w}_k \dpr \nabla p}{A_k} - A \sum \alpha_k \nabla\dpr \vc{w}_k,
\end{equation}
where $1/A = \sum \alpha_k /A_k$.

With \cref{eq:pk_1,eq:epP} and $\mathcal{F}_{1}^{(1)} + \mathcal{F}_{2}^{(1)} = 0$, one can solve
\begin{subequations}
\begin{align}\label{eq:Fk0}
    \mathcal{F}_{1}^{(1)} &= \mathcal{F}_{1,Kap}^{(1)} + \mathcal{F}_{1,Diff}^{(1)},  \\
    \mathcal{F}_{1,Kap}^{(1)} &= \alpha_1 \alpha_2 \frac{ (A_2 - A_1) \nabla \dpr \vc{u} - (\mathcal{G}_{a2} \Gamma_2 / \alpha_2 - \mathcal{G}_{a1} \Gamma_1 / \alpha_1 ) }{A_1 \alpha_2 + A_2 \alpha_1},  \\
    \mathcal{F}_{1,Diff}^{(1)} &= \alpha_1 \alpha_2 \frac{ (A_2 \nabla \dpr \vc{w}_2 - A_1 \nabla \dpr \vc{w}_1)  + (\vc{w}_2 - \vc{w}_1)\dpr \nabla p + ( \mathcal{W}_2 -  \mathcal{W}_1) }{A_1 \alpha_2 + A_2 \alpha_1}\\
   & + \alpha_1 \alpha_2 \frac{ (\alpha_1 \overline{\overline{\tau}}_{1} : \overline{\overline{D}}_1) \Gamma_1 / \alpha_1 - (\alpha_2 \overline{\overline{\tau}}_{2} : \overline{\overline{D}}_1) \Gamma_2 / \alpha_2   }{A_1 \alpha_2 + A_2 \alpha_1} - \mathcal{F}_{1,vis}^{(1)}.
\end{align}
\end{subequations}

Note that the first term $\mathcal{F}_{1,Kap}^{(1)}$ coincides with the corresponding result of Kapila's model (in the absence of heat conduction and viscosity). This term represents the volume fraction variation due to the compaction effect. The term $\mathcal{G}_{ak}$ is defined by replacing the component viscous dissipation in \cref{eq:Gk} with the average one:  
\[\mathcal{G}_{ak} = \alpha_k \overline{\overline{\tau}}_{ak} : \overline{\overline{D}}_a + \mathcal{Q}_k + q_k + \mathcal{I}_k.\]

The second term $\mathcal{F}_{1,Diff}^{(1)}$ is new and due to the velocity non-equilibrium effect (or the mass diffusion process). All velocity-disequilibrium terms are included in $\mathcal{F}_{1,Diff}^{(1)}$. For the definition of $\mathcal{F}_{1,vis}^{(1)}$, see \cref{eq:F1vis}.

The first term can be further split into five parts according to the corresponding contribution of each physical process
\begin{equation}
\mathcal{F}_{1,Kap}^{(1)} = \mathcal{F}_{1,hd}^{(1)} + \mathcal{F}_{1,vis}^{(1)} + \mathcal{F}_{1,ht}^{(1)} + \mathcal{F}_{1,hc}^{(1)} + \mathcal{F}_{1,ex}^{(1)},
\end{equation}
where the terms due to the hydrodynamic process,  the viscous dissipation, the inter-phase heat transfer, the heat conduction, and the external heat source are as follows
\begin{subequations}
\begin{align}
\mathcal{F}_{1,hd}^{(1)} = \alpha_1 \alpha_2 \frac{ (A_2 - A_1) \nabla \dpr \vc{u} }{A_1 \alpha_2 + A_2 \alpha_1},\\\label{eq:F1vis}
\mathcal{F}_{1,vis}^{(1)} = \alpha_1 \alpha_2 \frac{ (\alpha_1 \overline{\overline{\tau}}_{a1} : \overline{\overline{D}}_a) \Gamma_1 / \alpha_1 - (\alpha_2 \overline{\overline{\tau}}_{a2} : \overline{\overline{D}}_a) \Gamma_2 / \alpha_2   }{A_1 \alpha_2 + A_2 \alpha_1}, \\
\mathcal{F}_{1,ht}^{(1)} = \alpha_1 \alpha_2 \frac{ \mathcal{Q}_1 \Gamma_1 / \alpha_1 - \mathcal{Q}_2 \Gamma_2 / \alpha_2   }{A_1 \alpha_2 + A_2 \alpha_1}, \\
\mathcal{F}_{1,hc}^{(1)} = \alpha_1 \alpha_2 \frac{ q_1 \Gamma_1 / \alpha_1 - q_2 \Gamma_2 / \alpha_2   }{A_1 \alpha_2 + A_2 \alpha_1},\\
\mathcal{F}_{1,ex}^{(1)} = \alpha_1 \alpha_2 \frac{ \mathcal{I}_1 \Gamma_1 / \alpha_1 - \mathcal{I}_2 \Gamma_2 / \alpha_2   }{A_1 \alpha_2 + A_2 \alpha_1}.
\end{align}
\end{subequations}

\subsubsection{The velocity relaxation}
We continue to perform asymptotic analysis of \cref{eq:bn_prim_1} with respect to the velocity relaxation time $\varepsilon_u = \varepsilon \to 0$. In a similar way we can express the velocity in the following asymptotic expansion:
\begin{subequations}
\begin{align}\label{eq:asympU}
{\vc{U}}^{\prime} &= {\vc{U}}^{\prime(0)} + \varepsilon_u {\vc{U}}^{\prime(1)} + \mathcal{O}\left(\varepsilon_u^{2}\right),
\end{align}
\end{subequations}
where ${\vc{U}}^{\prime}$ is the reduced state variable with equilibrium pressure of \cref{eq:bn_prim_1}, $\vc{U}^{\prime} = \left[
s_{1}^{\prime} \;\; s_{2}^{\prime} \;\; \vc{u}_{1}^{\prime} \;\; \vc{u}_{2}^{\prime} \;\; p^{\prime} \;\; \alpha_{1}^{\prime}\right]$.

Similar to the analysis in the above section, one can deduce
\begin{equation}
\vc{u}_{1}^{\prime(0)} = \vc{u}_{2}^{\prime(0)} = \vc{u}^{\prime(0)}.
\end{equation}

Then we have 
\begin{equation}
\overline{\vc{u}}^{\prime} = \vc{u}^{\prime(0)} +  \varepsilon_{u} \left( y_1 \vc{u}_{1}^{\prime(1)}  + y_2 \vc{u}_{2}^{\prime(1)}\right) + \mathcal{O}(\varepsilon_u^2),
\end{equation}
\begin{equation}\label{eq:wk_ep}
\vc{w}_k^{\prime} = \vc{u}_{k}^{\prime} - \overline{\vc{u}}^{\prime} = \varepsilon_u y_{k*} (\vc{u}_{k}^{\prime(1)} - \vc{u}_{k*}^{\prime(1)}) + \mathcal{O} (\varepsilon_u^2).
\end{equation}

From \cref{eq:wk_ep}, we deduce
\begin{equation}
\left| \vc{w}_k^{\prime} \right|^2 = \mathcal{O}(\varepsilon_u^2) = \mathcal{O}(\varepsilon_p).
\end{equation}

At this stage the mixture equations derived in \Cref{eq:mixeqns} still hold. In the reduced model we only retain terms to the order $\mathcal{O}(\varepsilon_u)$.
To be consistent with \cref{eq:asymP_expansion}, the term $\left| \vc{w}_k \right|^2$ should be abandoned in the reduction.

Thus, the R.H.S. of \cref{eq:mix_mom,eq:mix_en1} can be simplified as follows
\begin{equation}
\nabla\dpr\left(\vc{J}_k \tpr {\vc{w}_k} \right)  \approx \vc{0}.
\end{equation}
\begin{equation}
\vc{J}^{ww}_{k} \approx \vc{0}, \;\; \vc{J}^{uw}_{k} \approx \vc{0}.
\end{equation}

The definition of the mixture total energy \cref{eq:rhoE} is reduced to
\begin{equation}
    \rho E \approx \sum \alpha_k \rho_k E_k 
      =\rho e + \frac{1}{2} {\overline{\vc{u}}} \dpr \overline{\vc{u}}.
\end{equation}

Moreover, $\vc{J}^{vis}_k$ becomes 
\begin{equation}\label{eq:Jk_vis1}
\vc{J}^{vis}_k \approx - \alpha_k \overline{\overline{\tau}}_{wk} \dpr \overline{\vc{u}} - \alpha_k \overline{\overline{\tau}}_{ak} \dpr \vc{w}_k,
\end{equation}

\subsubsection{The complete model}
Combining \cref{eq:mass_diff,eq:mix_mom,eq:mix_en,eq:alpk_1}, we summarize the final model in the limit $\varepsilon \to 0$ as follows:
\begin{subequations}\label{eq:final_model}
\begin{align}
\dudx{\alpha_k\rho_k}{t} + \nabla\dpr(\alpha_k\rho_k \overline{\vc{u}}) = - \nabla\dpr \vc{J}_k, \\
\dudx{\rho \overline{\vc{u}}}{t} + \nabla\dpr\left(\rho \overline{\vc{u}} \tpr \overline{\vc{u}} + p \overline{\overline{I}} - \overline{\overline{\tau}} \right) =  \nabla \dpr \sum \alpha_k \overline{\overline{\tau}}_{wk},\\
  \dudx{ \rho E}{t} + 
  \nabla\dpr\left(
\rho E \overline{\vc{u}} + p \overline{\vc{u}}  -  \overline{\overline{\tau}} \dpr \overline{\vc{u}}
  \right) 
  = - \nabla \dpr \sum \left( \vc{J}^{h}_{k} + \vc{J}^{vis}_k \right) + \sum q_k + \sum {\mathcal{I}}_k,\\
  \dudx{\alpha_k}{t} + \overline{\vc{u}} \dpr \nabla\alpha_k = \mathcal{F}_{k}^{(1)},
\end{align}
\end{subequations}
where the terms $\vc{J}_k$, $\vc{J}^{h}_{k}$, $\vc{J}^{vis}_k$, $\mathcal{F}_{k}^{(1)}$ are defined in \cref{eq:Jk}, \cref{eq:Jhk}, \cref{eq:Jk_vis1}, \cref{eq:Fk0}, respectively.

\begin{remark}
It appears that the RHS (right hand side) term of the volume fraction equation $\mathcal{F}_{k}^{(1)}$ is very complicated  in comparision with the Kapila's one-velocity  model.  However, in the case of the concerned scenario where mass diffusion goes under the temperature equilibrium, it can be significantly simplified as we demonstrate below.
\end{remark}

\begin{remark}
In the above model, the mixture stress tensor is a volume fraction weighted average of component stress tensors. It depends on the diffusion velocity $\vc{w}_k$, which is different from the formulation of \cite{Cook2009Enthalpy}. Our formulation is consistent with the analysis of \cite{GEURST1986455,gouin2008dissipative}.
\end{remark}

\subsubsection{Thermodynamical consistency}
\begin{proposition}
The reduced model satisfies the entropy condition
\begin{equation}
\rho {\mathrm{D}^{in}_{m} s}/{\mathrm{D}_{m} t} \geq 0.
\end{equation}
\end{proposition}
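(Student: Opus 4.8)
The plan is to obtain the mixture entropy balance by summing the phase entropy equation \cref{eq:sk_1} weighted by $1/T_k$, and then to exhibit the internal production as a sum of manifestly non-negative contributions. First I would divide \cref{eq:sk_1} by $T_k$ and sum over $k=1,2$; by the definition of the mixture material derivative in \cref{eq:mixture_mat_der1} the left-hand side becomes $\rho\,\mathrm{D}_m s/\mathrm{D}_m t$. The only genuinely non-local term is the heat conduction $q_k/T_k = -(\nabla\dpr\vc{J}_{qk})/T_k$, which I would rewrite through the identity $(\nabla\dpr\vc{J}_{qk})/T_k = \nabla\dpr(\vc{J}_{qk}/T_k) - \vc{J}_{qk}\dpr\nabla T_k/T_k^2$. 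The divergence part $-\sum\nabla\dpr(\vc{J}_{qk}/T_k)$ is identified with the external entropy flux $\rho\,\mathrm{D}^{ex}_m s/\mathrm{D}_m t$, and everything that remains is the internal production $\rho\,\mathrm{D}^{in}_m s/\mathrm{D}_m t$.

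Next I would dispatch the standard contributions. The viscous part $\sum \alpha_k(\overline{\overline{\tau}}_k : \overline{\overline{D}}_k)/T_k$ is non-negative by the usual deviatoric--spherical splitting of the Newtonian law \cref{eq:newton_vis}, which gives $\overline{\overline{\tau}}_k : \overline{\overline{D}}_k = 2\mu_k|\overline{\overline{D}}_k^{\mathrm{dev}}|^2 + \mu_{b,k}(\nabla\dpr\vc{u}_k)^2 \geq 0$ because $\mu_k,\mu_{b,k}>0$. The surviving heat-conduction remainder becomes, with Fourier's flux \cref{eq:fourier_flux}, $\sum \alpha_k\lambda_k|\nabla T_k|^2/T_k^2 \geq 0$. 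The temperature relaxation contributes $\sum\mathcal{Q}_k/T_k = \varsigma(T_1-T_2)^2/(T_1 T_2)\geq 0$ by the two-component form of \cref{eq:relaxations}, and the external heating adds $\sum\mathcal{I}_k/T_k\geq 0$ since $\mathcal{I}_k\geq 0$ by \cref{eq:I}.

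The crux is the pair of velocity-disequilibrium terms in \cref{eq:sk_1}: the velocity-relaxation work $(\vc{u}_I-\vc{u}_k)\dpr\mathcal{M}_k$ and the interfacial-stress work $(\vc{u}_k-\vc{u}_I)\dpr(\overline{\overline{\tau}}_I\dpr\nabla\alpha_k)$. Both I would reduce using $\vc{u}_I=\overline{\vc{u}}=\sum y_k\vc{u}_k$, which yields the two-phase identities $\vc{u}_1-\vc{u}_I=y_2(\vc{u}_1-\vc{u}_2)$ and $\vc{u}_2-\vc{u}_I=-y_1(\vc{u}_1-\vc{u}_2)$, together with $\nabla\alpha_1=-\nabla\alpha_2$ from the saturation constraint. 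Under the temperature equilibrium $T_1=T_2=T$ the common factor $1/T$ factors out, the velocity-relaxation term collapses to $(\vartheta/T)|\vc{u}_1-\vc{u}_2|^2\geq 0$, and --- substituting the postulated ansatz \cref{eq:tauI}, so that $\overline{\overline{\tau}}_I\dpr\nabla\alpha_k=\mathcal{B}(\nabla\alpha_1\dpr\nabla\alpha_k)(\vc{u}_1-\vc{u}_2)$ --- the interfacial work collapses to $(\mathcal{B}/T)|\nabla\alpha_1|^2|\vc{u}_1-\vc{u}_2|^2\geq 0$ because $\mathcal{B}>0$. This last computation is precisely what certifies that the ansatz \cref{eq:tauI} is compatible with the second law, and I expect it to be the main obstacle: one must carefully track the sign reversals from $\nabla\alpha_2=-\nabla\alpha_1$ and from the two mass-weighted velocity differences so that the cross terms recombine into a single square rather than a sign-indefinite quantity. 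Collecting all contributions, $\rho\,\mathrm{D}^{in}_m s/\mathrm{D}_m t$ is a sum of non-negative terms, which proves the claim.
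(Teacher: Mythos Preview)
Your proposal is correct and mirrors the paper's own (terse) argument: divide \cref{eq:sk_1} by $T_k$, sum over $k$, split off the divergence $-\sum\nabla\dpr(\vc{J}_{qk}/T_k)$ as the external flux, and verify that each remaining group is non-negative --- you have supplied precisely the ``algebraic manipulations'' the paper omits. One minor sharpening: the temperature-equilibrium hypothesis you invoke for the two velocity-disequilibrium terms is not actually needed; carrying the factors $1/T_k$ through your computation yields the common prefactor $y_2/T_1+y_1/T_2>0$ in both the velocity-relaxation and the interfacial-stress contributions, so the entropy inequality holds even for $T_1\neq T_2$.
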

\begin{proof}
Since no terms of order $\mathcal{O}\left( {\varepsilon_u}^2 \right)$ participate in \cref{eq:sk_1}, it still holds after velocity relaxation. We write the equation for the entropy production as follows:
\begin{equation}
\rho {\mathrm{D}^{in}_{m} s}/{\mathrm{D}_{m} t} = \sum \frac{1}{T_k} \left( \mathcal{G}_k - \nabla\dpr \frac{\vc{q}_k}{T_k} \right) + \sum \frac{1}{T_k} \left( \overline{\vc{u}}-\vc{u}_{k}\right) \cdot \mathcal{M}_{k}  +  \sum \frac{1}{T_k} \left( \vc{u}_{k} - \overline{\vc{u}} \right) \cdot \overline{\overline{\tau}}_I \cdot \nabla \alpha_k.
\end{equation}
By using \cref{eq:relaxations,eq:tauI,eq:newton_vis,eq:fourier_flux}, one can prove that the three terms on the right hand side are all non-negative after some algebraic manipulations, which is omitted here. 
\end{proof}

\section{Numerical method}
\label{sec:numer_meth}
The model (\ref{eq:final_model}) can be split into five distinct physical processes including the inviscid hydrodynamic process, the viscous process, the heat transfer process, the heat conduction process, the mass diffusion process. The splitting and solution procedures are performed on the basis of physical concerns and assumptions. First, the pressure relaxation takes place much faster than the thermal process. Second, heat conduction and mass diffusion proceeds under temperature and pressure equilibrium. In the first four steps, only the mass fraction averaged velocity is involved. Velocity disequilibrium that leads to the mass diffusion only appears in the mass diffusion process. The last two stages are accompanied by inter-phase heat transfer to maintain the temperature equilibrium.

We write the split processes as follows:
\begin{enumerate}
\item[(a)] The inviscid hydrodynamic process
\begin{subequations}\label{eq:HD}
\begin{align}
\dudx{\alpha_k\rho_k}{t} + \nabla\dpr(\alpha_k\rho_k \overline{\vc{u}}) = 0, \\
\dudx{\rho \overline{\vc{u}}}{t} + \nabla\dpr\left(\rho \overline{\vc{u}} \tpr \overline{\vc{u}} + p\overline{\overline{I}} \right) =  0,\\
  \dudx{ \rho E}{t} + 
  \nabla\dpr\left(
\rho E \overline{\vc{u}} +  p \overline{\vc{u}}
  \right) 
  = 0,\\
  \dudx{\alpha_k}{t} + \overline{\vc{u}} \dpr \nabla\alpha_k = \mathcal{F}_{k,hd}^{(1)},
\end{align}
\end{subequations}

\item[(b)] The viscous process
\begin{subequations}\label{eq:VIS}
\begin{align}
\dudx{\alpha_k\rho_k}{t} = 0, \label{eq:VIS_mk}\\
\dudx{\rho \overline{\vc{u}}}{t} =  \nabla\dpr\left( \overline{\overline{\tau}} \right), \label{eq:VIS_mom}\\
  \dudx{ \rho E}{t}   
  = \nabla\dpr\left(  \overline{\overline{\tau}} \dpr \overline{\vc{u}}
  \right), \label{eq:VIS_en}\\
  \dudx{\alpha_k}{t} = \mathcal{F}_{k,vis}^{(1)},
\end{align}
\end{subequations}

\item[(c)] The heat transfer process
\begin{subequations}\label{eq:HT}
\begin{align}
\dudx{\alpha_k\rho_k}{t}  = 0, \\
\dudx{\rho \overline{\vc{u}}}{t}  =  0,\\
  \dudx{ \rho E}{t} 
  =  0,\\
  \dudx{\alpha_k}{t} = \mathcal{F}_{k,ht}^{(1)},
\end{align}
\end{subequations}

\item[(d)] The heat conduction process
\begin{subequations}\label{eq:HC}
\begin{align}
\dudx{\alpha_k\rho_k}{t}  = 0, \\
\dudx{\rho \overline{\vc{u}}}{t}  =  0,\\
  \dudx{ \rho E}{t} 
  =  \sum q_k + \sum \mathcal{Q}_k^{HC},\\
  \dudx{\alpha_k}{t} =  \mathcal{F}_{k,hc}^{(1)} + \mathcal{F}_{k,hcht}^{(1)},\label{eq:HC_alp}
\end{align}
\end{subequations}
where the term $\mathcal{Q}_k^{HC}$ represents the heat transfer between two components in the course of heat conduction, that drives the phase temperatures towards equilibrium. Although the heat transfer does not impact the mixture energy equation ($\sum \mathcal{Q}_k^{HC} = 0$), it leads to the variation of volume fraction through the term $\mathcal{F}_{k,hcht}^{(1)}$.

\item[(e)] The mass diffusion process
\begin{subequations}\label{eq:MD}
\begin{align}
\dudx{\alpha_k\rho_k}{t} = - \nabla\dpr \vc{J}_k, \label{eq:MD_par_mass}  \\
\dudx{\rho \overline{\vc{u}}}{t}  =   \nabla \dpr \sum \alpha_k \overline{\overline{\tau}}_{wk}, \label{eq:MD_mom}\\
  \dudx{ \rho E}{t} 
  = - \nabla \dpr \sum  \vc{J}^{h}_{k}  +  \nabla \dpr \sum \alpha_k \overline{\overline{\tau}}_{wk} \dpr \overline{\vc{u}}+  \nabla \dpr \sum \alpha_k \overline{\overline{\tau}}_{ak} \dpr {\vc{w}}_k + \sum \mathcal{Q}_{k}^{MD}, \label{eq:MD_en}\\
  \dudx{\alpha_k}{t} = \mathcal{F}_{k,Diff}^{(1)} + \mathcal{F}_{k,mdht}^{(1)}, \label{eq:MD_vol}
\end{align}
\end{subequations}
\end{enumerate}
where the term $\mathcal{Q}_{k}^{MD}$ is the heat transfer between components in the process of mass diffusion and $\sum \mathcal{Q}_k^{MD} = 0$. $\mathcal{F}_{k,mdht}^{(1)}$ is the volume fraction variation caused by the heat transfer $\mathcal{Q}_{k}^{MD}$.

For the solution of this model (\ref{eq:final_model}), we implement the fractional step method, i.e., each set of split governing equations for the physical processes are solved one by one in order. The solution obtained at each step serves as the initial condition for the next step.  


In numerical implementation, the solution of non-linear parabolic PDEs with respect to the velocity, the temperature and the mass fraction are involved at the heat conduction step, viscous step and mass diffusion step, respectively. For their solution, we implement an efficient explicit local iteration method that is to be described in \Cref{subsec:numer_met_para}.

\subsection{Hydrodynamic part}

The hydrodynamic part  (i.e. \cref{eq:HD}) in fact coincides with the original Kapila's model whose jump conditions, Riemann invariants and numerical solutions have been sufficiently studied in literature \cite{murrone2005five,kapila2001two}.


It is established that one should solve the non-conservative advection equation for the volume fraction in DIM for preserving the pressure-velocity equilibrium. Most trials to use the conservative reformulation with the aid of mass conservation fail, as summarized in \cite{Abgrall2001,abgrall1996prevent}.


To implement the Godunov method, we reformulate the volume fraction equation as follows:
\begin{equation}\label{eq:reduced_five_hyper:vol1}
\frac{\partial \alpha_{1}}{\partial t}+   \nabla \cdot \left( \alpha_{1} \vc{u} \right) =  \alpha_1 \nabla\cdot \vc{u} +  \mathcal{F}_{k,hd}^{(1)} = \frac{A}{A_1}\alpha_1 \nabla\cdot \vc{u}.
\end{equation}

The hydrodynamic subsystem can be written in the vector form as follows:
\begin{equation}\label{eq:reduced_five_hyper_conv}
\dudx{\vc{U}}{t} + \nabla\dpr \vc{F}\left(  \vc{U}\right) =  \vc{S}\left(  \vc{U} \right) \nabla\dpr \vc{u},
\end{equation}
where 
\[
\vc{U} = \left[ \alpha_1 \rho_1 \;\; \alpha_2 \rho_2 \;\;  \rho u \;\; \rho v \;\; \rho E \;\; \alpha_1 \right]^{\text{T}}, \quad
\vc{F}\left(\vc{U}\right) = u \vc{U} + p \vc{D},
\]
\[
\vc{D}\left(\vc{U}\right) = \left[ 0 \;\; 0 \;\; 1 \;\; 0 \;\; u \;\; 0 \right]^{\text{T}},
\quad
\vc{S}\left(\vc{U}\right) = \left[ 0 \;\; 0 \;\; 0 \;\;  0  \;\; 0 \;\; \frac{A}{A_1}\alpha_1 \right]^{\text{T}}.
\]


We use the Godunov method with the HLLC approximate solver \cite{Toro2009Riemann} to evaluate the numerical flux of the conservative part of \cref{eq:reduced_five_hyper_conv} (i.e., temporarily omit the right hand side). High orders are achieved by using the fifth order WENO scheme  \cite{Coralic2014Finite,Johnsen2006Implementation,JIANG1996202} for the spatial reconstruction of the local characteristic variables  on cell faces or the MUSCL scheme \cite{Toro2009Riemann,leveque2002finite} for the spatial reconstruction of the primitive physical variables. The two-stage Heun method (i.e., the modified Euler method) is used for the time integration.

The non-conservative term $\vc{S}\left(  \vc{U} \right) \nabla\dpr \vc{u}$ is calculated as follows
\begin{equation}
\frac{1}{V_{i j k}} \int_{V_{i j k}} \frac{A}{A_1}\alpha_1 \nabla \cdot \vc{u} \mathrm{d} V \approx \frac{1}{V_{i j k}} \left( \frac{A}{A_1}\alpha_1 \right)_{i j k} \int_{{\sigma}_{i j k}} \vc{u} \cdot \vc{n} \mathrm{d} {\sigma},
\end{equation}
where the subscript $_{ijk}$ denotes the index of the considered cell. The denotations ${V_{i j k}}$, ${{\sigma}_{i j k}}$ and $\vc{n}$ are the cell volume, the surface, and the surface normal, respectively. The variables $A, \; A_1, \; \alpha_1$ are taken to be the cell-averaged values as in \cite{Tiwari2013A,Johnsen2006Implementation,Coralic2014Finite}.


\subsection{Viscous part}
Observing \cref{eq:VIS_mk}, it can be seen that $m_k = \alpha_k \rho_k$ does not vary at this stage, nor does the mixture density $\rho = \sum m_k$ or the mass fraction $y_k$. 

The momentum equation and energy equation (\ref{eq:VIS_en}) can be rewritten in the following form
\begin{subequations}
\begin{align}
    \rho \dudx{ \overline{\vc{u}}}{t} =  \nabla\dpr  \overline{\overline{\tau}}, \label{eq:vis_mom}\\
\rho \dudx{  E}{t} =  \nabla \dpr \left( \overline{\overline{\tau}} \dpr \overline{\vc{u}} \right). \label{eq:vis_en}
\end{align}
\end{subequations}


 \Cref{eq:vis_mom} forms a parabolic PDE set, which is reduced to the following form in 1D:
 \begin{equation}\label{eq:viscous_para_pde}
 \rho \frac{\partial \overline{u}}{\partial t}=\frac{\partial}{\partial x}\left(\frac{4}{3} \mu \frac{\partial \overline{u}}{\partial x}\right),
 \end{equation}
 where the mixture dynamic viscosity $\mu=\sum \alpha_k \mu_k$.

In general, the parabolic PDE set is non-linear due to the dependence of the coefficients on the unknowns.
For some application scenarios, the phase viscosity depends on the phase density $\rho_k$, the pressure $p$ and the temperature $T_k$, i.e., $\mu_k = \mu_k (\rho_k, p_k, T_k)$ and   the mixture viscosity $\mu = \mu (\alpha_k,  p_k, \rho_k, T_k)$. The temperature is subject to the impact of the viscosity terms. The latter varies with the temperature and the pressure. Such non-linearity issues are considered by using the method of iterations, where the coefficients are frozen in each iteration. By doing so, \cref{eq:viscous_para_pde} represents a linear PDE in each iteration. The linearized parabolic PDE is solved with the LIM algorithm \cite{Zhukov2010}. The numerical methods for solving such parabolic equations are  summarized in \Cref{subsec:numer_met_para}.

\subsection{Temperature relaxation part}\label{subsec:TR}
Simple algebraic manipulations of  \cref{eq:HT} give
\begin{equation}\label{eq:mkdek}
m_k = m_k^{(0)} = const, \;\; {\overline{\vc{u}}} ={\overline{\vc{u}}}^{(0)} =  const, \;\; \rho E = (\rho E)^{(0)} = const,
\end{equation}
where the superscript ``${(0)}$'' represent the variables at the beginning of the current stage.

Combination of the first three equations in \cref{eq:HT} leads to
\begin{equation}\label{eq:dedt_HT}
\dudx{e}{t} = 0.
\end{equation}

The reduced model is in pressure equilibrium, which means:
 \begin{equation}\label{eq:pres_eq}
p_1\left( T_1, \rho_1 \right) = p_2\left( T_2, \rho_2 \right) = p.
 \end{equation}

The saturation condition for volume fractions leads to
\begin{equation}\label{eq:vol_saturation}
\frac{m_1}{\rho_1} + \frac{m_2}{\rho_2} = 1.
\end{equation}

With \cref{eq:pres_eq,eq:vol_saturation}, the phase density can be expressed as 
\begin{equation}\label{eq:rhok_fun}
\rho_k = \rho_k \left(m_1, m_2, T_1, T_2 \right).
\end{equation}

Further, we obtain
\begin{equation}\label{eq:ek_mk_Tk}
e_k = e_k \left( \rho_k, T_k \right) = e_k \left(m_1, m_2, T_1, T_2 \right),
\end{equation}
and
\begin{equation}\label{eq:e_fun}
e = \sum y_k e_k = \sum \frac{m_k}{m_1 + m_2} e_k \left(m_1, m_2, T_1, T_2 \right) =  e \left(m_1, m_2, T_1, T_2 \right).
\end{equation}

Combination of \cref{eq:mkdek,eq:e_fun,eq:dedt_HT} gives
\begin{align}\label{eq:tr_eq1}
\mathcal{A}_{1}\dudx{T_1}{t} + \mathcal{A}_{2} \dudx{T_2}{t} = 0,
\end{align}
where \[\mathcal{A}_{1} = \dudx{e}{T_1}, \;\; \mathcal{A}_{2} = \dudx{e}{T_2}.\]

The time derivative is approximated as
\begin{equation}\label{eq:dTk_dis}
\dudx{T_k}{t} = \frac{T_k^{\prime} - T_k^{(0)}}{\Delta t},
\end{equation}
here and below the superscripts ``$(0)$'' and ``$\prime$'' represent the variables at the beginning and the end of the current stage, respectively.  

We assume the heat transfer is large enough to reach a temperature equilibrium at the end of the current time step, thus, we have 
\begin{equation}\label{eq:Teq}
T_1^{\prime} = T_2^{\prime} = T^{\prime}.
\end{equation}

By using \cref{eq:tr_eq1,eq:dTk_dis,eq:Teq} one can obtain:
 \begin{equation}\label{eq:tr_temp_av}
 T^{\prime} = \frac{\mathcal{A}_1 T_1^{(0)} + \mathcal{A}_2 T_2^{(0)}}{\mathcal{A}_1 + \mathcal{A}_2}
 \end{equation}
 
Having $T^{\prime}$, we can solve for $\rho_k^{\prime}$ with \cref{eq:rhok_fun}, and then for $p^{\prime}$ with \cref{eq:pres_eq}. Since the partial densityss does not vary, i.e. $m_k^{\prime} = m_k^{(0)}$, the volume fractions can be evaluated with $\alpha_k^{\prime} = m_k^{\prime}/\rho_k^{\prime}$. In this way, we can determine the temperature-relaxed state in each cell.

\begin{remark}
The above manipulations for the temperature  relaxation is based on the infinite relaxation rate assumption. In such case the temperature relaxation term $Q_k$ does not appear explicitly. To deal with finite temperature relaxation, the governing equation for the internal energy of each phase is useful. We can obtain these equations by following a similar procedure in the derivation of \cref{eq:dekdt_hc} in the next subsection:
\begin{subequations}
\begin{align}
\alpha_1 \rho_1 \dudx{e_1}{t} = {\mathcal{Q}}_1 - p\dudx{\alpha_1}{t},\\
\alpha_2 \rho_2 \dudx{e_2}{t} =  {\mathcal{Q}}_2 - p\dudx{\alpha_2}{t}.
\end{align}
\end{subequations}
The temperature relaxation term $\mathcal{Q}_k$ is prescribed according to specific physical laws.
\end{remark}

\subsection{Heat conduction part}\label{subsec:HC}
The procedure for the heat conduction is totally analogous to that of the heat transfer. From \cref{eq:HC}, one can deduce
\begin{equation}\label{eq:dmkdt0}
\dudx{m_k}{t} = 0, \;\; \dudx{y_k}{t} = 0, \;\; \dudx{\overline{\vc{u}}}{t} = 0.
\end{equation}

Invoking equation (\ref{eq:e_fun}), one can  write the equation for the mixture internal energy as
\begin{equation}\label{eq:e_hc}
\rho \dudx{e}{t} = \rho \left( \mathcal{A}_{1}\dudx{T_1}{t} + \mathcal{A}_{2} \dudx{T_2}{t} \right) = \sum_k q_k + \sum_k {\mathcal{Q}}_k^{HC}.
\end{equation}

By using the definition of the mixture internal energy $e = \sum_k y_k e_k$, from \cref{eq:e_hc} one can deduce
\begin{equation}\label{eq:diff_e}
\alpha_1 \rho_1 \dudx{e_1}{t} + \alpha_2 \rho_2 \dudx{e_2}{t} =  q_1 + q_2 + {\mathcal{Q}}_1^{HC} + {\mathcal{Q}}_2^{HC}.
\end{equation}

Aided by \cref{eq:dedrhodp}, differentiation of \cref{eq:pres_eq} yields
\begin{equation}\label{eq:diff_pres}
\frac{p\Gamma_1 - A_1}{\alpha_1} \dudx{\alpha_1}{t} + \rho_1 \Gamma_1 \dudx{e_1}{t} = \frac{p\Gamma_2 - A_2}{\alpha_2} \dudx{\alpha_2}{t} + \rho_2 \Gamma_2 \dudx{e_2}{t}.
\end{equation}

Solution of \cref{eq:diff_e,eq:diff_pres} with respect to $\alpha_k \rho_k \dudx{e_k}{t}$ gives
\begin{subequations}
\begin{align}\label{eq:dekdt_hc}
\alpha_1 \rho_1 \dudx{e_1}{t} = q_1 + {\mathcal{Q}}_1^{HC} - p\dudx{\alpha_1}{t},\\
\alpha_2 \rho_2 \dudx{e_2}{t} = q_2 + {\mathcal{Q}}_2^{HC} - p\dudx{\alpha_2}{t},
\end{align}
\end{subequations}
where the last term $p\dudx{\alpha_k}{t}$ represents the thermodynamical work due to the motion of the interface. The term $\dudx{\alpha_k}{t}$ is defined in \cref{eq:HC_alp} and depends on $q_k$, $\mathcal{Q}_k^{HC}$.

The temperature relaxation term $\mathcal{Q}_k^{HC}$ represents the heat exchange between phases, which drives phase temperatures towards equilibrium. 
Here, we assume such a model for $\mathcal{Q}_k^{HC}$ that the phase temperature equilibrium is maintained in the course of the multicomponent heat conduction, i.e.,
\begin{equation}\label{eq:temp_equi}
 \dudx{T_1}{t} = \dudx{T_2}{t} = \dudx{T}{t}.
 \end{equation} 

Note that the condition \cref{eq:temp_equi} is in fact an implicit condition assumed in one-temperature models, for example, the conservative model used in \cite{Lemartelot2014}.

By using \cref{eq:dmkdt0,eq:temp_equi,eq:dekdt_hc,eq:ek_mk_Tk}, one can deduce
\begin{equation}\label{eq:deT}
\sum m_k  \dudx{e_k}{T_k} \dudx{T}{t} = \sum q_k,
\end{equation}
where $\dudx{e_k}{T_k} = C_{v,k}$ for the ideal gas EOS that is considered in the current work. Solving \cref{eq:deT}, one an obtain the temperature $T^{\prime}$ at the end of this stage. Having $T^{\prime}$, the other variables are computed in the same way as in the last (heat transfer) stage.

\subsection{Mass diffusion part}
Different from previous stages where the partial densities $m_k$ remain constant, at this stage the mass diffusion leads to the variation of partial densities with time, as can be seen from \cref{eq:MD_par_mass}.  

Since $\sum\vc{J}_k = \vc{0}$, summing \cref{eq:MD_par_mass} over $k$ leads to
\begin{equation}\label{eq:sumrhok}
 \dudx{\rho}{t} = 0.
 \end{equation} 

From \cref{eq:sumrhok,eq:MD_mom} follows that
\begin{equation}\label{eq:MD_vel}
\rho \dudx{\overline{\vc{u}}}{t} = \sum \nabla \dpr \overline{\overline{\tau}}_{wk}.
\end{equation}

To compute the RHS of \cref{eq:MD_vel}, we need $\vc{w}_k$ that is evaluated explicitly according to \cref{eq:fick_law}.

Combination of \cref{eq:sumrhok,eq:MD_par_mass,eq:fick_law} leads to
\begin{equation}\label{eq:md_para_pde}
\dudx{m_k}{t} = \rho \dudx{y_k}{t} =  \nabla \dpr \left( \rho D \nabla y_k  \right).
\end{equation}

Solving the parabolic PDE (\ref{eq:md_para_pde}) yields the parameters at the end of the mass diffusion stage: $y_k^{\prime}$, $m_k^{\prime}$. Explicit solution of \cref{eq:MD_en} yields  $({\rho E})^{\prime}$. 

In defining $p^{\prime}$ and $\alpha_k^{\prime}$, we use the temperature equilibrium condition which is assumed in the Fick's law and the Stefan-Maxwell law. 

With \cref{eq:sumrhok,eq:MD_vel,eq:MD_en} one can deduce 
\begin{equation}\label{eq:MD_diff_e}
\rho \dudx{e}{t} = \alpha_1 \rho_1 \dudx{e_1}{t} + \alpha_2 \rho_2 \dudx{e_2}{t} + \mathcal{C}_1 e_1 + \mathcal{C}_2 e_2 =  \mathcal{E}^{c}_1 + \mathcal{E}^{c}_2,
\end{equation}
here for simplicity we introduce 
\begin{subequations}
\begin{align*}
\mathcal{C}_k = - \nabla \dpr \vc{J}_k,\\
\mathcal{E}^{c}_k = -\nabla \cdot  \boldsymbol{J}_{k}^{h} +  \alpha_k \overline{\overline{\tau}}_{wk}:\overline{\overline{D}}_a +\nabla \cdot  \alpha_{k} \overline{\overline{\tau}}_{a k} \cdot \boldsymbol{w}_{k}.
\end{align*}
\end{subequations}

Further, from \cref{eq:MD_diff_e,eq:diff_pres} follows
\begin{subequations}\label{eq:MD_dekdt}
\begin{align}
\alpha_1 \rho_1 \dudx{e_1}{t} = - \widetilde{p} \dudx{\alpha_1}{t}  + \widetilde{\mathcal{E}}_1,\\
\alpha_2 \rho_2 \dudx{e_2}{t} = - \widetilde{p} \dudx{\alpha_2}{t}  + \widetilde{\mathcal{E}}_2,
\end{align}
\end{subequations}
where 
\begin{subequations}
\begin{align}
\widetilde{p} &= p - \frac{A_1/\alpha_1 + A_2/\alpha_2}{\Gamma_1/\alpha_1 + \Gamma_2/\alpha_2}, \\
\widetilde{\mathcal{E}}_1 &=  \frac{\mathcal{E}_s \Gamma_2 /\alpha_2 + \mathcal{H}_2 - \mathcal{H}_1}{\Gamma_1/\alpha_1 + \Gamma_2/\alpha_2}, \\
\widetilde{\mathcal{E}}_2 &= \frac{\mathcal{E}_s \Gamma_1 /\alpha_1 + \mathcal{H}_1 - \mathcal{H}_2}{\Gamma_1/\alpha_1 + \Gamma_2/\alpha_2} , \\
\mathcal{E}_s &= - \mathcal{C}_1 e_1  - \mathcal{C}_2 e_2  + \mathcal{E}^{c}_1 + \mathcal{E}^{c}_2,\\
\mathcal{H}_k &= \frac{\mathcal{C}_k \left( A_k - \Gamma_k p \right)}{\rho_k}.
\end{align}
\end{subequations}

Heat transfer leads to the variation of the volume fraction. This variation can be determined from \cref{eq:MD_dekdt,eq:ek_mk_Tk} as follows:

\begin{equation}\label{eq:MD_dalpdt}
\dudx{\alpha_1}{t} = 
\frac{m_1 m_2 \left(  e_{1,T} e_{2,m} - e_{2,T} e_{1,m} \right) +  \left( m_2 e_{2,T} \widetilde{\mathcal{E}}_1  - m_1  e_{1,T} \widetilde{\mathcal{E}}_2    \right)}{ \left( m_1 e_{1,T}  + m_2 e_{2,T}\right)  \widetilde{p}}.
\end{equation}
where 
\begin{subequations}
\begin{align}
    e_{k,T} = \dudx{e_k}{T_1} + \dudx{e_k}{T_2},\\
    e_{k,m} = \dudx{e_k}{m_1} \mathcal{C}_1 + \dudx{e_k}{m_2} \mathcal{C}_2.
\end{align}
\end{subequations}

For the ideal gas, we have
\begin{equation}\label{eq:dekdTj}
    \dudx{e_k}{T_j} = \delta_{kj} C_{v,k}, \;\; \dudx{e_k}{m_j} = 0,
\end{equation}
where $\delta_{kj}$ is the Kronecker function.

In this case \cref{eq:MD_dalpdt} can be simplified to a large extent as follows
\begin{equation}\label{eq:MD_dalpdt1}
\dudx{\alpha_1}{t} = 
\frac{  m_2 C_{v,2} \widetilde{\mathcal{E}}_1  - m_1  C_{v,1} \widetilde{\mathcal{E}}_2   }{ \left( m_1 C_{v,1}  + m_2 C_{v,2}\right)  \widetilde{p}}.
\end{equation}

Under the temperature equilibrium assumption we solve \cref{eq:MD_dalpdt1} instead of \cref{eq:MD_vol}. Solution of \cref{eq:MD_par_mass,eq:MD_mom,eq:MD_en,eq:MD_dalpdt} provide a full set of conservative variables at the end of this stage $[m_1^{\prime}, \;\;m_2^{\prime}, \;\;(\rho \vc{\overline{u})}^{\prime}, \;\;(\rho E)^{\prime}, \;\; (\alpha_1)^{\prime} ]$.

%
%
%
%
%

We also describe another approach to determine $\dudx{\alpha_k}{t}$ is as follows:

 (1) By using \cref{eq:md_para_pde,eq:MD_diff_e,eq:e_fun,eq:temp_equi}, one can obtain $\dudx{T}{t}$,
 
 (2) Having $\dudx{T}{t}$, by using \cref{eq:rhok_fun,eq:md_para_pde},  we obtain $\dudx{\rho_k}{t}$,
 
 (3) Having $\dudx{\rho_k}{t}$, by using \cref{eq:md_para_pde}, we obtain $\dudx{\alpha_k}{t}$.

We find that these two approaches lead to numerical results with negligible differences.


\subsection{Numerical methods for the parabolic diffusion PDEs}\label{subsec:numer_met_para}

The dissipation equations (\cref{eq:viscous_para_pde,eq:deT,eq:md_para_pde}) can be written in the following quasi-linear parabolic PDEs (in 1D) as follows:
\begin{equation}\label{eq:nonlin_para_pde}
\frac{\partial v}{\partial t}=L [v]+f(x, t), \quad x \in \Lambda \subset \mathbb{R}, 
\end{equation}
where the operator $L[\cdot]$ represents a quasi-linear elliptic  operator that is positive definite and takes the following form
\begin{equation}
L[v] = \dudx{}{x}\left( k(v) \dudx{v}{x} \right).
\end{equation}

For solution of such non-linear parabolic equations, we use an iterative method as follows
\begin{align}
&v^{(0)} = v^n, \\
&\frac{\partial v^{(s+1)}}{\partial t} = \dudx{}{x}\left( k(v^{(s)}) \dudx{v^{(s+1)}}{x} \right) +f(x, t),\label{eq:lin_para_pde}
\end{align}
where the non-linear coefficient $k(v^{(s)})$ is linearised by assuming dependence on the solutions of last iteration. One can also use more advanced method such as the Newton-Raphson method to speed up the convergence. 

The sequences (\ref{eq:lin_para_pde}) is iterated until convergence that is defined as $||v^{(s+1)} - v^{(s)}|| < \chi$ ($\chi$ is a small positive number). 
To solve the linearised parabolic PDE (\ref{eq:lin_para_pde}) for the unknown $v^{(s+1)}$, one can use various implicit or explicit methods. Here, we use a monotonicity-preserving explicit local iteration method (LIM) \cite{Zhukov2010}. This scheme has a stable time step of order $\mathcal{O}(P^2)$ ($P$ represents the stencil size), thus alleviates the stiffness of the explicit implementation. It has advantages in computation efficiency and parallel scalability than the implicit schemes under not too big parabolic Courant number (less than $10^4$), see  \cite{zhukov2018,zhukov2018development}.

\subsection{Preservation of the pressure-velocity-temperature equilibrium}
 
To preserve the  pressure-velocity-temperature equilibrium for in the pure translation of an isolated interface, two different mixture EOSs are used in \cite{Alahyari2015,Johnsen2012}. However, this may results in  incompatibility with the second law of thermodynamics, since one can not define a mixture entropy due to the ambiguity in the EOS definition. In their approach only one temperature is involved, meaning that the temperature equilibrium is always reached.  The equilibrium temperature can be regarded as a specific average of the phase temperatures.

For thermodynamical considerations, we have explicitly introduced the temperature relaxation mechanism to reach the temperature equilibrium. Our approach ensures the entropy production with one uniquely defined mixture EOS. Moreover, it maintains the pressure-velocity-temperature equilibrium in the pure translation of an isolated material interface.

Let us consider the following Riemann problem:
\begin{align}\label{eq:initial1}
u^{L} =u^{R}=u > 0, \;\;
\rho_{k}^{L} =\rho_{k}^{R}=\rho_{k},  \;\;
e_{k}^{L} = e_{k}^{R} = e_{k}, \;\;
{\alpha_2}^{L} \neq {\alpha_2}^{R}, \;\;  p_{L} = p_{R} = p, \;\;  T_{L} = T_{R} = T, \;\; k=1,2.
\end{align}

The phase temperatures on both sides of the interface are in equilibrium at the initial moment, i.e.,
\begin{equation}\label{eq:initial2}
T_{1,L} = T_{2,L} = T_{L}, \; T_{1,R} = T_{2,R} = T_{R}.
\end{equation}

\begin{proposition}
The solution to the proposed model equations (in the absence of mass diffusion) maintains the pressure-velocity-temperature equilibrium with the initial discontinuity  (\ref{eq:initial1}) and (\ref{eq:initial2}).
\end{proposition}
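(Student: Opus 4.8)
The plan is to exploit the fractional-step structure of \Cref{sec:numer_meth} and reduce the claim to a single hydrodynamic update. By hypothesis the mass-diffusion step \cref{eq:MD} is absent, so the velocity is single-valued, $\overline{\vc{u}}\equiv u$. For the data \cref{eq:initial1,eq:initial2} the velocity is spatially uniform, whence $\overline{\overline{D}}_a=0$ and $\overline{\overline{\tau}}=0$ in \cref{eq:newton_vis_disp}; the viscous step \cref{eq:VIS} therefore leaves every variable unchanged, its volume-fraction source \cref{eq:F1vis} also vanishing. Likewise the temperature is uniform, so the Fourier flux \cref{eq:fourier_flux} and the source $\sum q_k$ vanish and the heat-conduction step \cref{eq:HC} is inactive, while the temperature-relaxation step of \Cref{subsec:TR} reproduces the same value, since \cref{eq:tr_temp_av} returns $T'=(\mathcal{A}_1 T+\mathcal{A}_2 T)/(\mathcal{A}_1+\mathcal{A}_2)=T$ when $T_1=T_2=T$. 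Thus it suffices to show that one Godunov step for the hydrodynamic system \cref{eq:HD} preserves the equilibrium.

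For this step I would first solve the face Riemann problems. Because $p_L=p_R=p$ and $u_L=u_R=u$, the HLLC contact speed and star pressure reduce to $S_*=u$ and $p^*=p$ (the common factor $u$ cancels in the standard $S_*$ quotient, and $p^*_L=p+\rho_L(S_L-u)(S_*-u)=p$), despite the mixture-density jump $\rho_L\neq\rho_R$ caused by the volume-fraction jump. Hence every face returns velocity $u$ and pressure $p$, and each numerical flux is the pure advective flux evaluated with the upwind ($u>0$, left) state. The crucial consequence is that the discrete divergence entering the reformulated volume-fraction equation \cref{eq:reduced_five_hyper:vol1} is exactly zero, so its non-conservative source $\frac{A}{A_1}\alpha_1\nabla\dpr\vc{u}$ vanishes and $\alpha_1$ advects with the \emph{same} upwind operator as the conserved densities $m_k=\alpha_k\rho_k$ and the total energy $\rho E$.

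The heart of the argument is to track the recovery of the primitive state. Writing $\lambda=u\Delta t/\Delta x$, I would check in turn: (i) since $m_{k,i}^n=\rho_k\alpha_{k,i}^n$ with $\rho_k$ constant, linearity of the advection operator gives $m_{k,i}^{n+1}=\rho_k\alpha_{k,i}^{n+1}$, so the reconstructed phase densities $\rho_k=m_k/\alpha_k$ stay uniform; (ii) the momentum flux difference collapses to $u^2(\rho_i-\rho_{i-1})$ because the uniform $p$ cancels, giving $(\rho u)^{n+1}=\rho^{n+1}u$ and hence $u^{n+1}=u$; (iii) since $\rho E=\sum(\rho_k e_k+\tfrac12\rho_k u^2)\alpha_k$ is a linear combination of the $\alpha_k$ and its flux difference likewise loses the pressure part, the advected $\rho E$ keeps this form, so subtracting the kinetic energy $\tfrac12\rho^{n+1}u^2$ yields $(\rho e)^{n+1}=\sum m_k^{n+1}e_k$ with the \emph{original} phase energies $e_k$. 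Finally I would invoke the pressure-equilibrium EOS inversion: with the uniform $\rho_k$ and the recovered $e_k$, the pair $(e_1,e_2)$ solving $p_1(\rho_1,e_1)=p_2(\rho_2,e_2)$ together with $\rho e=\sum m_k e_k$ (closed by \cref{eq:pres_eq,eq:vol_saturation}) is, by uniqueness, the initial one; therefore $p^{n+1}=p$ and $T_k^{n+1}=T_k(\rho_k,e_k)=T$ for both phases. The two-stage Heun integrator, being a convex combination of such forward-Euler updates, inherits the property, and high-order reconstruction changes nothing because $p$, $u$, $\rho_k$, $e_k$ are reconstructed exactly.

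The step I expect to be the main obstacle is establishing (iii) jointly with the closure. It is precisely the consistency between the advection of $\alpha_1$ and that of $m_k$ and $\rho E$, engineered by the non-conservative reformulation \cref{eq:reduced_five_hyper:vol1}, that makes $(\rho e)^{n+1}$ collapse back to $\sum m_k e_k$ and suppresses the spurious pressure oscillations of the fully conservative four-equation model. What is specific to the present two-temperature model, and goes beyond the classical Abgrall pressure--velocity result, is the last inversion: because the reduced model carries disequilibrium phase energies, temperature equilibrium is retained only if the inversion returns the individual $e_k$ rather than their mass-weighted sum, which in turn hinges on the uniqueness of the pressure-equilibrium closure. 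Verifying this uniqueness, i.e. the monotonicity of $p_k$ in $e_k$ for the ideal-gas EOS used here, is the delicate point I would check explicitly.
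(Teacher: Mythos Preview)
Your proposal is correct and follows the same strategy as the paper: exploit the fractional-step structure to show that the viscous, heat-conduction and temperature-relaxation stages are inert on uniform-$u$, uniform-$T$ data, then verify that one Godunov hydrodynamic update preserves $u,\,p,\,\rho_k,\,e_k$ and hence $T_k$. The only difference is packaging: where you work out the HLLC face values and the pressure-closure inversion explicitly, the paper simply notes that $\nabla\cdot\overline{\vc{u}}=0$ makes $\mathcal{F}_{k,hd}^{(1)}$ vanish so that \cref{eq:HD} reduces to the model of \cite{allaire2002five}, invokes the preservation theorem proved there to obtain $u^*=u,\ \rho_k^*=\rho_k,\ e_k^*=e_k,\ p^*=p$, and then reads off $T_k^*=T_k(\rho_k^*,p^*)=T$ directly from the EOS---so the closure-inversion uniqueness you flag as the main obstacle is absorbed into the cited result rather than argued anew.
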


\begin{proof}\label{proof:1}
The solutions are obtained in the framework of the Godunov FVM. We use a Riemann solver that restores the isolated contact discontinuity. We shall check the variables in the cell downstream the given discontinuity after a time step which is denoted as $\vc{U}^{*}$.

We have $\mathcal{F}_{k,hd}^{(1)} = 0$ since the velocity is uniform across the computational domain. In the absence of diffusions or external energy source, the model (\ref{eq:final_model}) is equivalent to that of \cite{allaire2002five}. Therefore, we directly use the  proved theorem in \cite{allaire2002five}: 
\begin{equation}
u^{*} = u, \quad
\rho_{k}^{*} =\rho_{k}, \quad
e_{k}^{*} =e_{k}, \quad
p^{*} =p.
\end{equation}
According to the EOS, we have $T_k^{*} = T_k (\rho_k^{*}, p^{*})$, which leads to 
\begin{equation}
T_k^{*} = T.
\end{equation}
Thus, the pressure-velocity-temperature equilibrium is maintained at the hydrodynamic stage.


Formally, the temperature relaxation process described in \cref{eq:tr_temp_av} can be treated as an averaging procedure of phase temperatures, thus, the equilibrium temperature remains at this stage. Moreover, the partial densities $m_k$ are also constants in the course of temperature relaxation. By using the relations \cref{eq:rhok_fun}, one can conclude that phase densities $\rho_k$ and volume fractions $\alpha_k$ also remain constant. According to the EOS, the pressure does not change, either.

Due to the uniformity of velocity and temperature in space, the viscous dissipation and heat conduction do not alter the above solution. 

To summarize, the pressure-velocity-temperature equilibrium is maintained.
\end{proof}



\section{Numerical results}
\label{sec:numer_res}

In this section, we perform several numerical tests to verify the proposed model and numerical methods. Without mentioning, we use the following default setting: (a) CFL = 0.2, (b) fifth-order WENO scheme to reconstruct the characteristic variables that are linearized on the cell interface \cite{Coralic2014Finite}, (c) the two-stage Heun method for time integration. The considered tests demonstrate the effect of temperature relaxation, mass diffusion, viscosity and heat conduction. Some of the numerical results with heat conduction of the proposed model is compared to those of the conservative four-equation model \cite{Cook2009Enthalpy,larrouturou1991preserve}, demonstrating the ability of the present model to model miscible interface problems without spurious oscillations. We also apply the model for simulating the laser ablative RM instability problem in the ICF field. The results obtained with both models demonstrate noticeable difference in flow structures.

\subsection{The pure transport problem}\label{subsec:pureTRS}
We first consider the pure translation of a smeared mass fraction profile, which is a mimic of an miscible interface. The pressure, temperature and velocity are uniformly $1\times 10^4$Pa, $5$K and 100m/s across the computational domain, respectively.   The two gases are characterized by the IG EOS with $\rho_1 = 20\text{kg}/\text{m}^3, \gamma_1 = 2.0$  and $\rho_2 = 1\text{kg}/\text{m}^3, \gamma_2 = 1.4$, respectively. The parameter $C_{v,k}$ should be chosen such that the initial phase temperatures are equilibrium, i.e. $(\gamma_1 - 1) \rho_1 C_{v,1} = (\gamma_2 - 1) \rho_2 C_{v,2}$. The initial mass fraction is smeared as \cite{Thornber2018,Kokkinakis2015}:
\begin{subequations}\label{eq:thornber_analy}
\begin{align}
    {\rho}=\frac{1}{2}\left({\rho}_{1}+{\rho}_{2}\right) - \frac{1}{2}\left({\rho}_{1}-{\rho}_{2}\right) \operatorname{erf}(z), \\
   {\rho Y_1}=\frac{1}{2}{\rho}_{1} - \frac{1}{2}{\rho}_{1} \operatorname{erf}(z), \\
    z=\frac{x  - x_0}{\sqrt{4 D t+h_{0}^{2}}},
\end{align}
\end{subequations}
where $h_0 = 0.02$m is the initial interface diffuseness, $x_0$ is the center of the computational domain $x\in \left[ 0\text{m}, 1\text{m} \right]$, $D = 0.01\text{m}^2$/s is the diffusivity.

Periodical boundary conditions are imposed on both sides of the computational domain. After $\Delta t = 5 / u$ (five periods) the solutions should return to the initial state.  The numerical results obtained with  the reduced five equation model are demonstrated in \Cref{fig:pureTRS}. It can be seen that the pressure and temperature equilibrium is well preserved.

%

\begin{figure}[ht]
\centering
\subfloat[Velocity]{\label{fig:pureTRS:vel}\includegraphics[width=0.5\textwidth]{./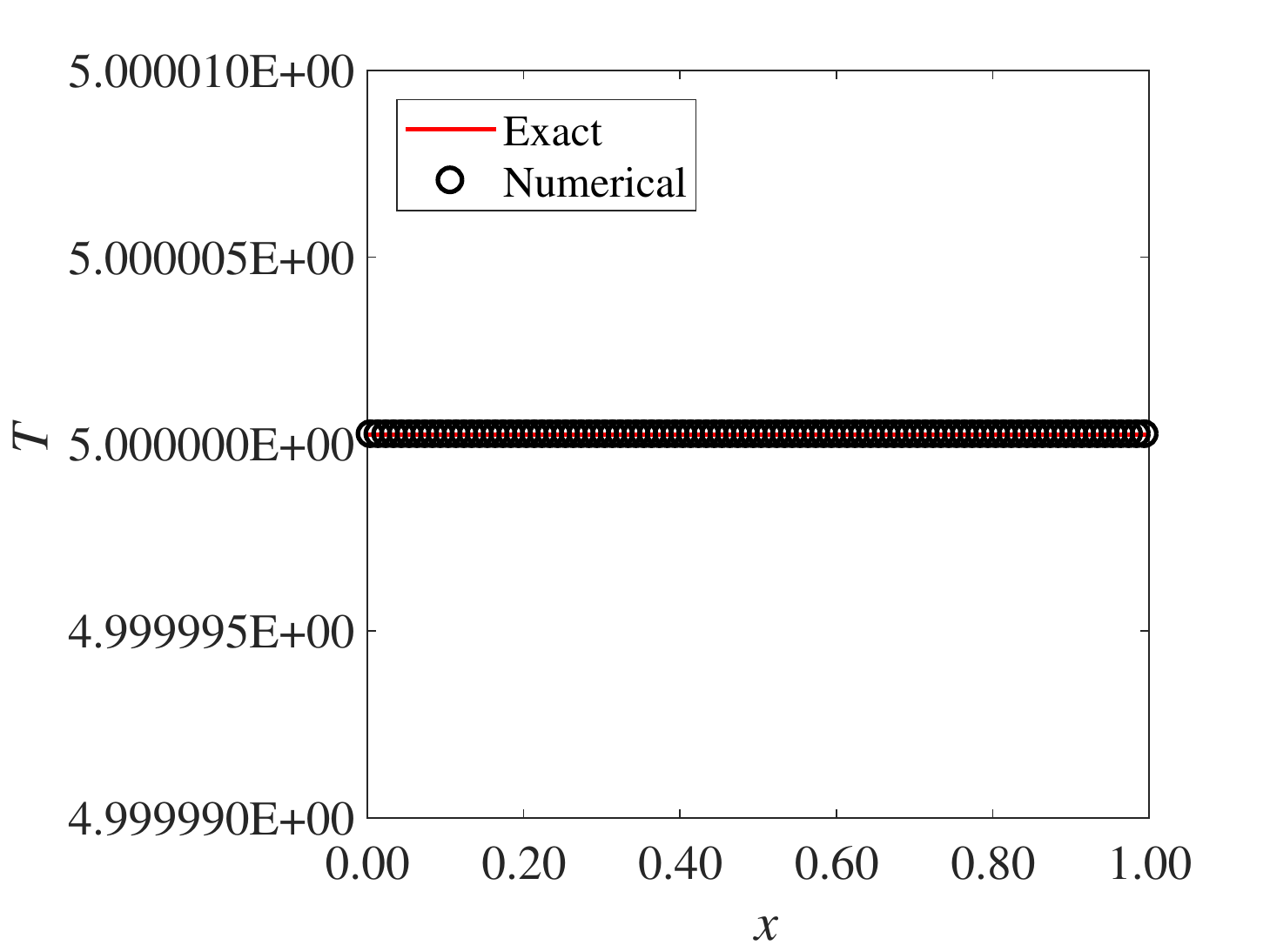}}
\subfloat[Velocity, locally enlarged]{\label{fig:pureTRS:vel1}\includegraphics[width=0.5\textwidth]{./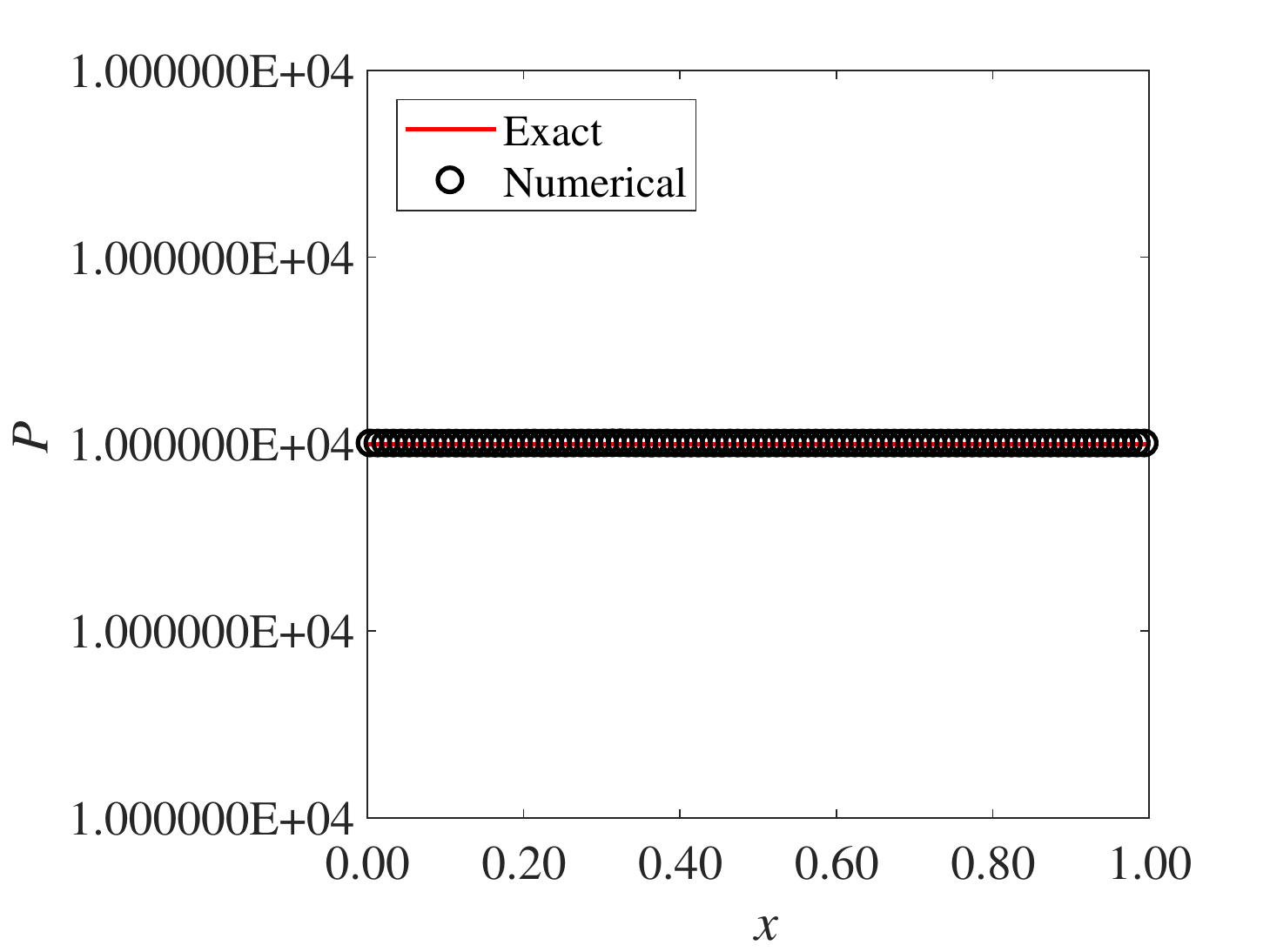}}
\caption{Numerical results for the pure translation of a diffused interface.}
\label{fig:pureTRS} 
\end{figure}

\subsection{The pure mass diffusion problem}\label{subsec:pureMD}
To verify the mass diffusion part of the model, we consider a problem with small diffusion velocity in comparison with the sound speed. In this case, the mass diffusion problem can be approximately treated as an incompressible one. 
The densities, adiabatic coefficients and specific heat capacities are the same as those in the last test.


It is assumed that mass diffusion goes at very small scale that the pressure and temperature are nearly uniform. In the incompressible limit of such a problem the governing equations are reduced to  \cite{Livescu2013,Kokkinakis2015,Thornber2018}:
\begin{equation}\label{eq:analytic_diff}
\frac{\partial {\rho}}{\partial t}=\frac{\partial}{\partial x}\left(D \frac{\partial {\rho}}{\partial x}\right), \quad \frac{\partial}{\partial x}\left({u}+\frac{D}{{\rho}} \frac{\partial {\rho}}{\partial x}\right)=0.
\end{equation}

  With the zero-gradient boundary condition for the density, the solution to  \cref{eq:analytic_diff} is given by \cref{eq:thornber_analy}. To investigate the convergence performance of the models, the initial conditions are given with the integration of \cref{eq:thornber_analy}  and averaging within each cell \cite{Thornber2018}.  The initial pressure is uniformly set to be $1\times10^5\text{Pa}$, which results in small enough Mach number so that the compressibility effect can be neglected. Computations are performed on a series of refining grid. The corresponding numerical results are demonstrated in \Cref{fig:figMD}. It can be seen that the numerical results tend to converge to the analytical solution (in the incompressible limit) with the grid refinement with the second-order MUSCL for spatial reconstruction in the hydrodynamic stage. The convergence order of the numerical algorithm is approximately 2, as demonstrated in \Cref{figMD1:conv_rate}. We also compare the temperature error with the results in \cite{Thornber2018} (\Cref{figMD1:T_err}).

\begin{figure}[htbp]
\centering
\subfloat[Velocity]{\label{figMD:vel}\includegraphics[width=0.5\textwidth]{./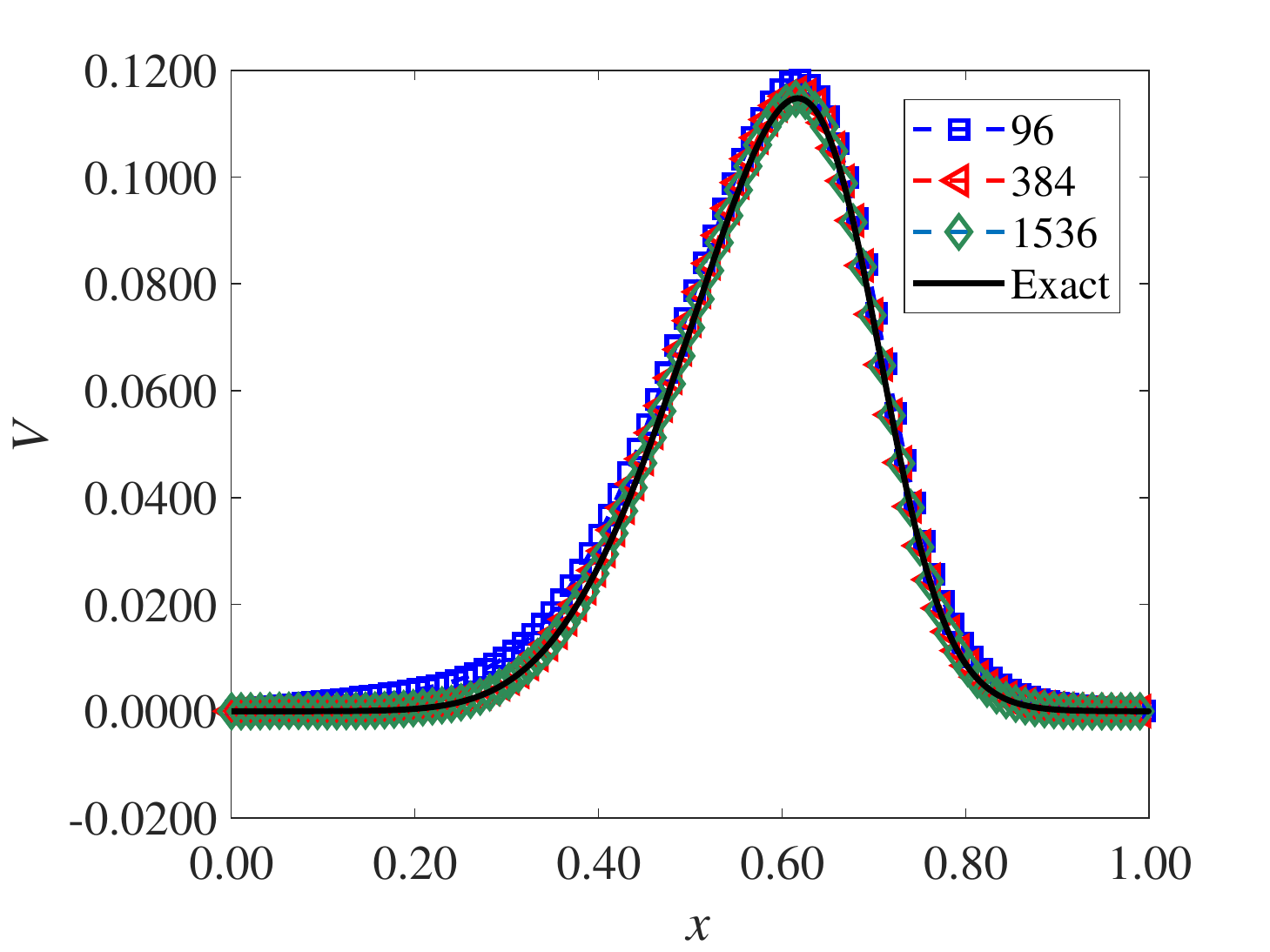}}
\subfloat[Velocity, locally enlarged]{\label{figMD:vel1}\includegraphics[width=0.5\textwidth]{./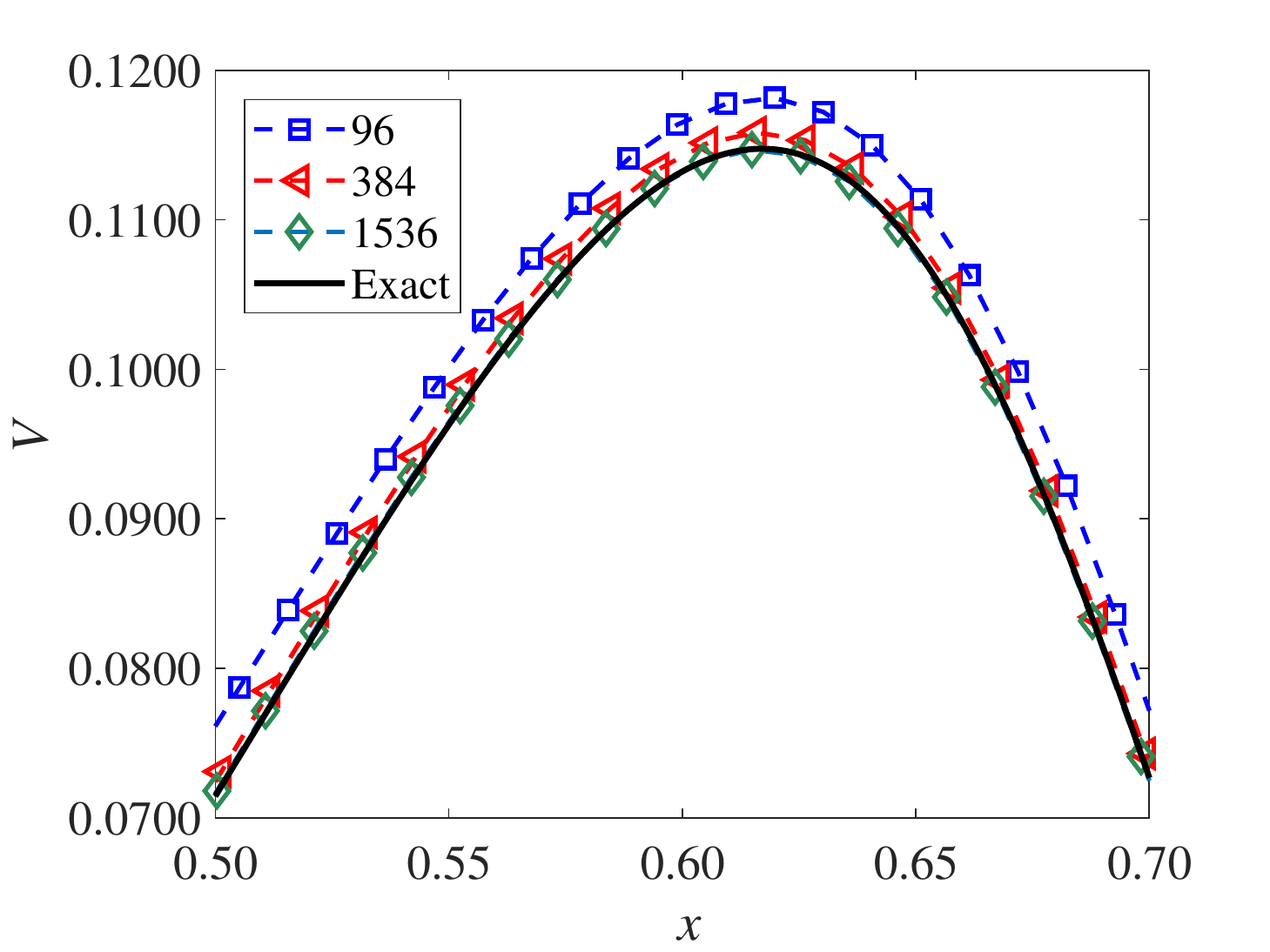}}\\
\subfloat[Density]{\label{figMD:pres}\includegraphics[width=0.5\textwidth]{./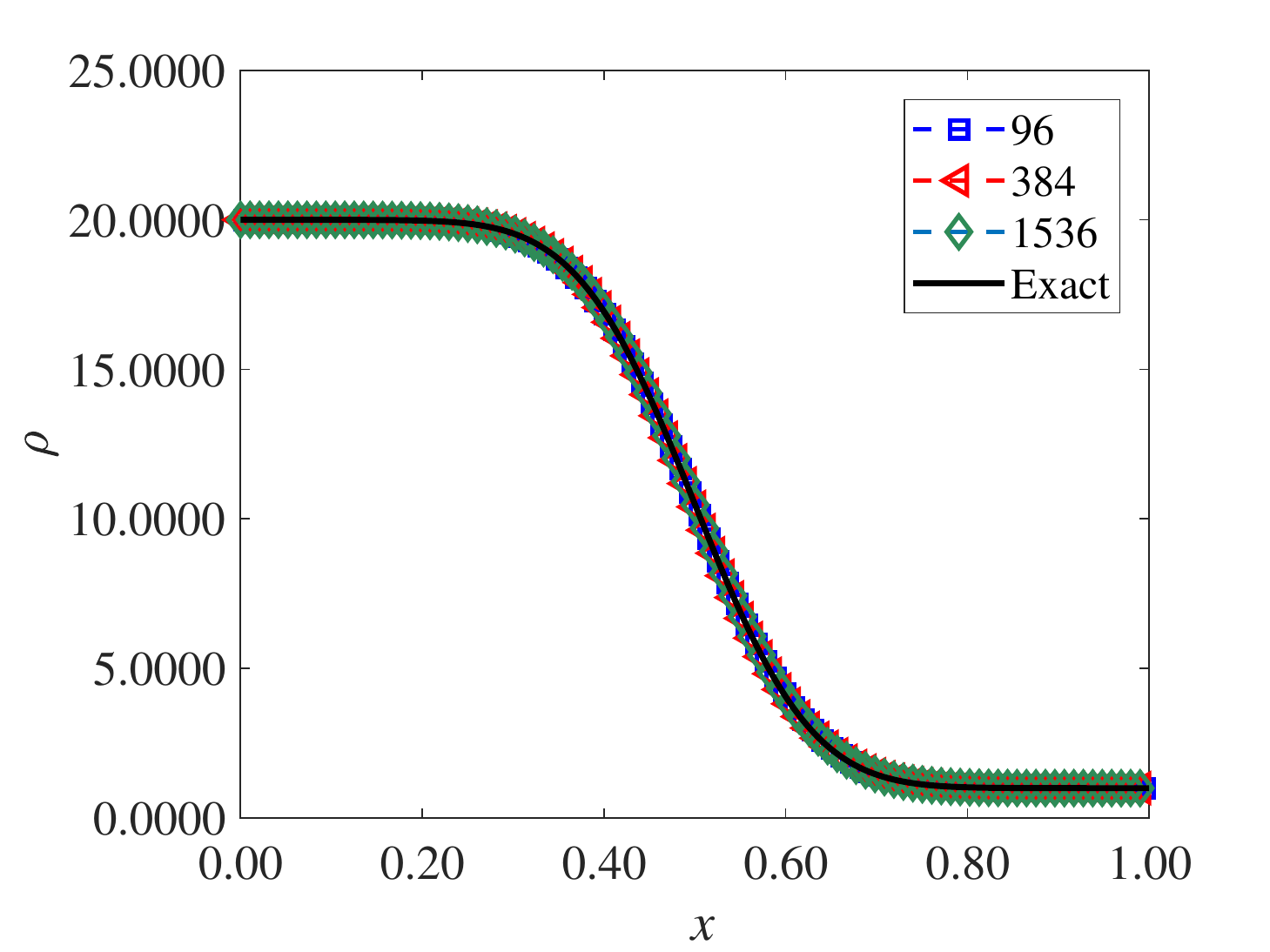}}
\subfloat[Density, locally enlarged]{\label{figMD:temp}\includegraphics[width=0.5\textwidth]{./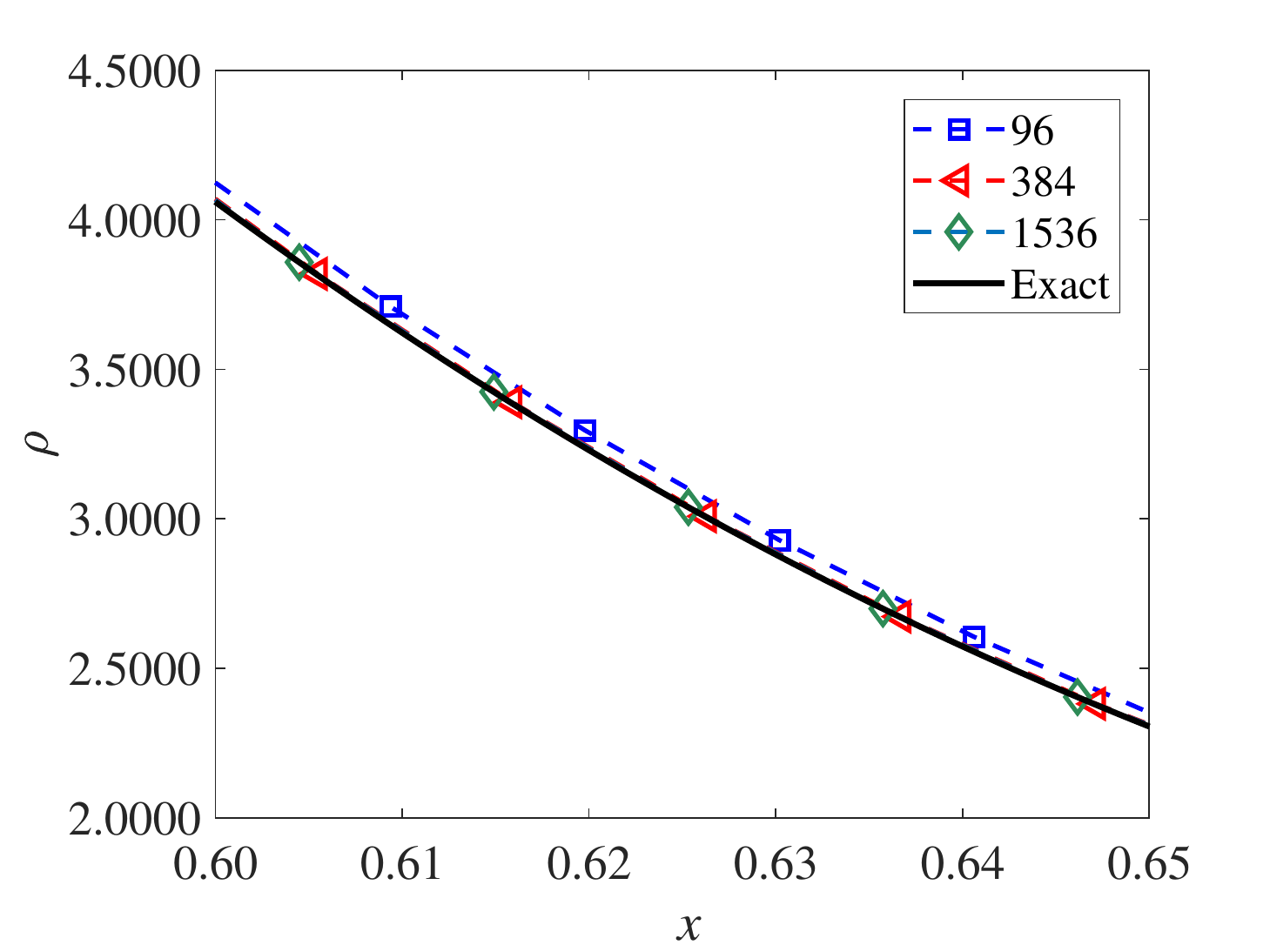}}
\caption{Numerical results for the pure mass diffusion problem.}
\label{fig:figMD} 
\end{figure}

\begin{figure}[htbp]
\centering
\subfloat[Accuracy order]{\label{figMD1:conv_rate}\includegraphics[width=0.5\textwidth]{./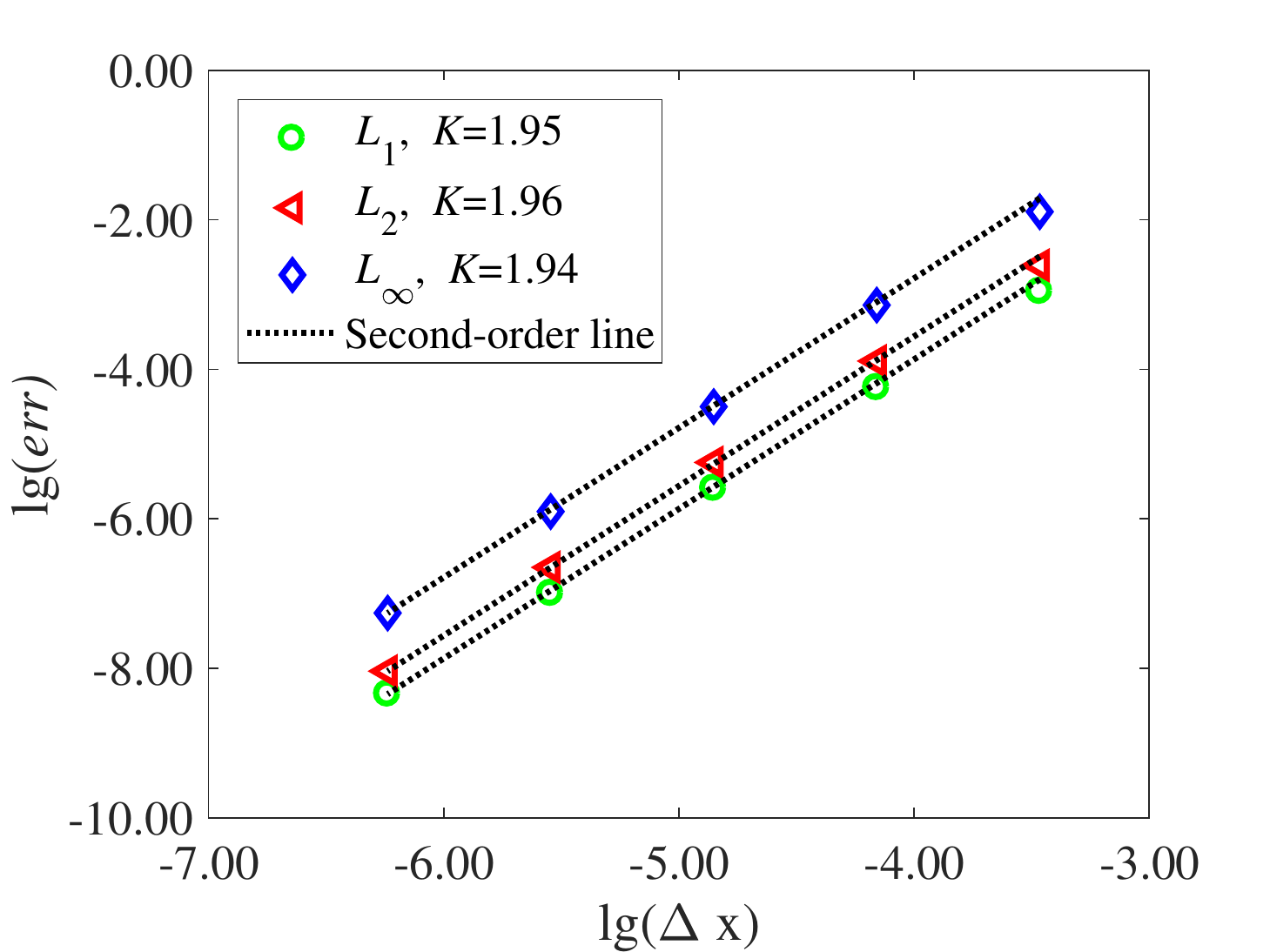}}
\subfloat[Temperature error]{\label{figMD1:T_err}\includegraphics[width=0.5\textwidth]{./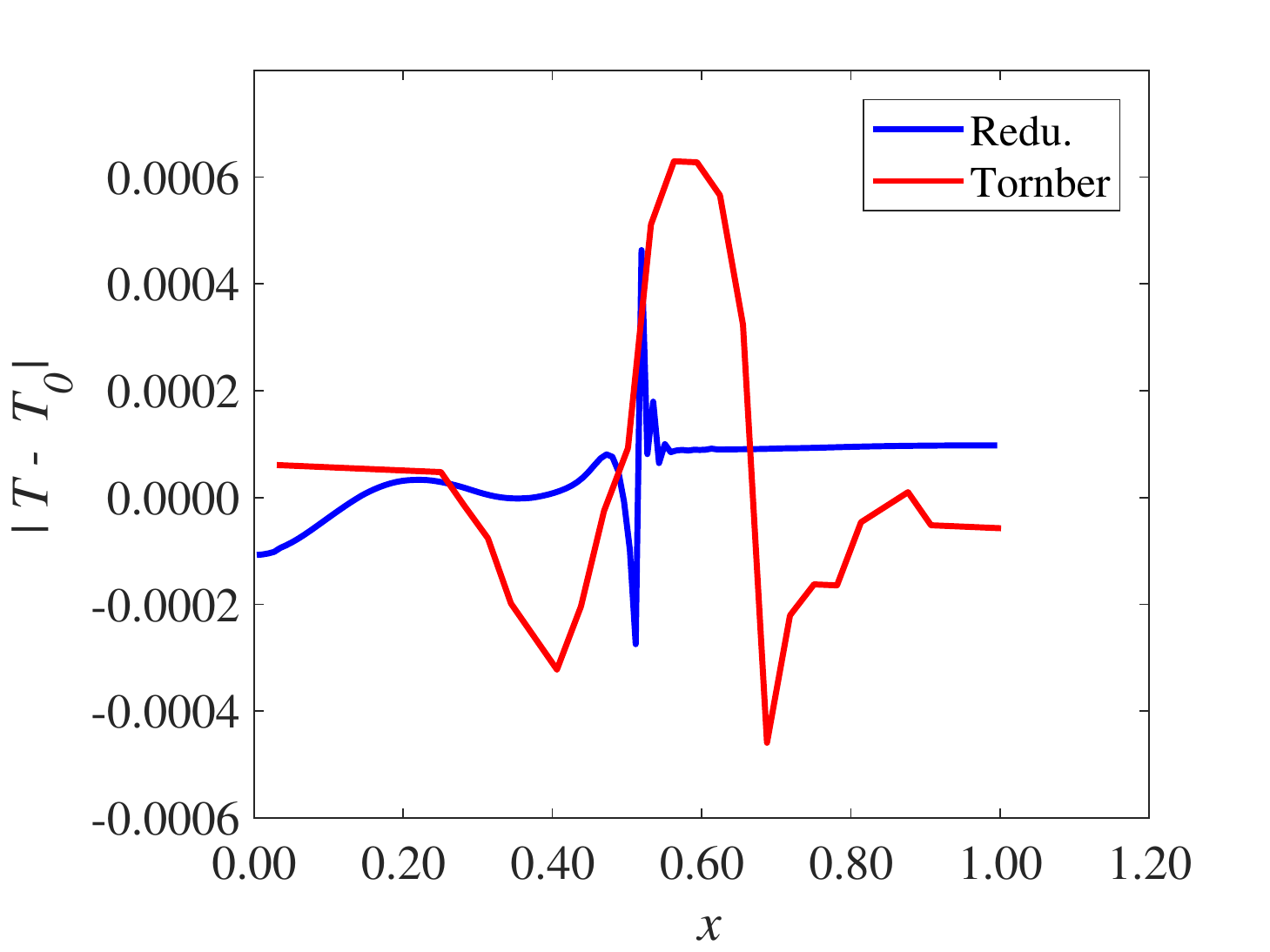}}
\caption{Numerical results of the reduced model for the pure mass diffusion problem. Left: convergence rate. Right: temperature error on 128-cell grid when $P = 1\times 10^4$.}
\label{fig:figMD1} 
\end{figure}

\subsection{The convergence of the viscous part}
Having validated the convergence of the mass diffusion, we then check the convergence performance of the viscous part. For such purpose, we manufacture an exact solution as follows
\begin{equation}
\left\{\begin{array}{l}
(\alpha_k \rho_k)(x,t) = 0.5 \text{kg}/\text{m}^3, \\
\alpha_1(x,t) = 0.7, \\
p(x,t) = 100 \text{Pa}, \\
u(x,t) = \frac{1+(2 a-1) \exp ((1-a) \xi / v)}{1+\exp ((1-a) \xi / v)}, \quad \xi=x-a t-x_{0}.
\end{array}\right.
\end{equation}

Note the the manufactured solution for $u(x,t)$ is the analytical solution to the viscous Burgers equation \[\dudx{u}{t} + \frac{\partial}{\partial x} \frac{u^2}{2} = \nu \frac{\partial^2 u}{\partial x^2}.\]

The properties for the materials are given as  $\gamma_1 = 5.0$, $C_{v,1} = 40.0 \text{J}/(\text{kg} \cdot \text{K})$,  $\gamma_1 = 1.4$, $C_{v,1} = 400.0 \text{J}/(\text{kg} \cdot \text{K})$ so that the initial temperature equilibrium is satisfied.
We use the constants $a = 0.4$, $x_0 = 0.1\text{m}$ and $\nu = 0.01\text{}$, the exact solution at $t = 0.05\text{s}$ is taken as the initial value. We perform computation to $t = 0.15\text{s}$. The numerical results tend to converge to the exact solution with grid refinement (\Cref{figvis:2}). The dependence of the error on the spatial resolution is demonstrated in \Cref{figvis:1}, demonstrating the accuracy order is approximately second order.

\begin{figure}[htbp]
\centering
\subfloat[Accuracy order]{\label{figvis:1}\includegraphics[width=0.5\textwidth]{./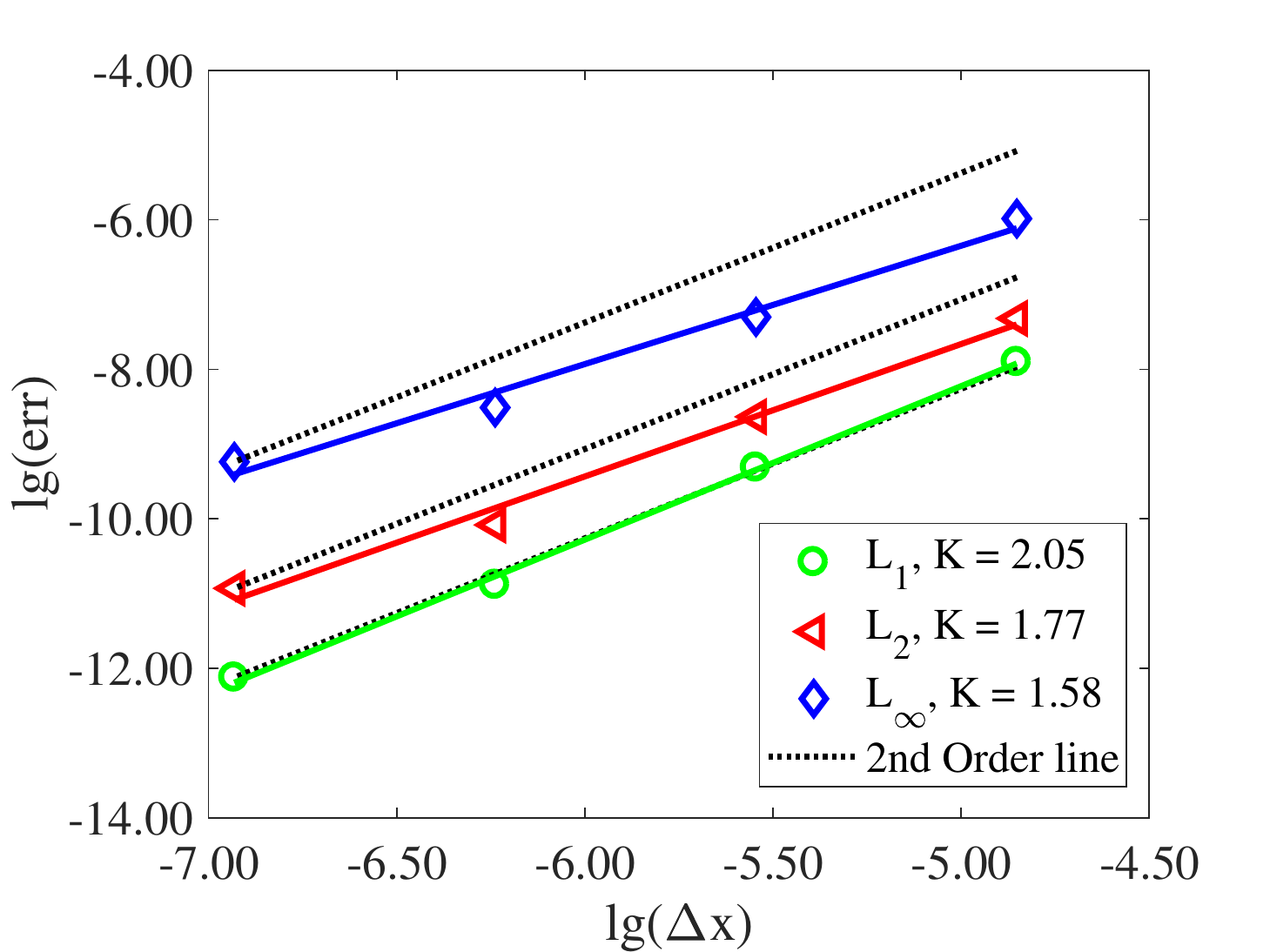}}
\subfloat[Velocity on different grids]{\label{figvis:2}\includegraphics[width=0.5\textwidth]{./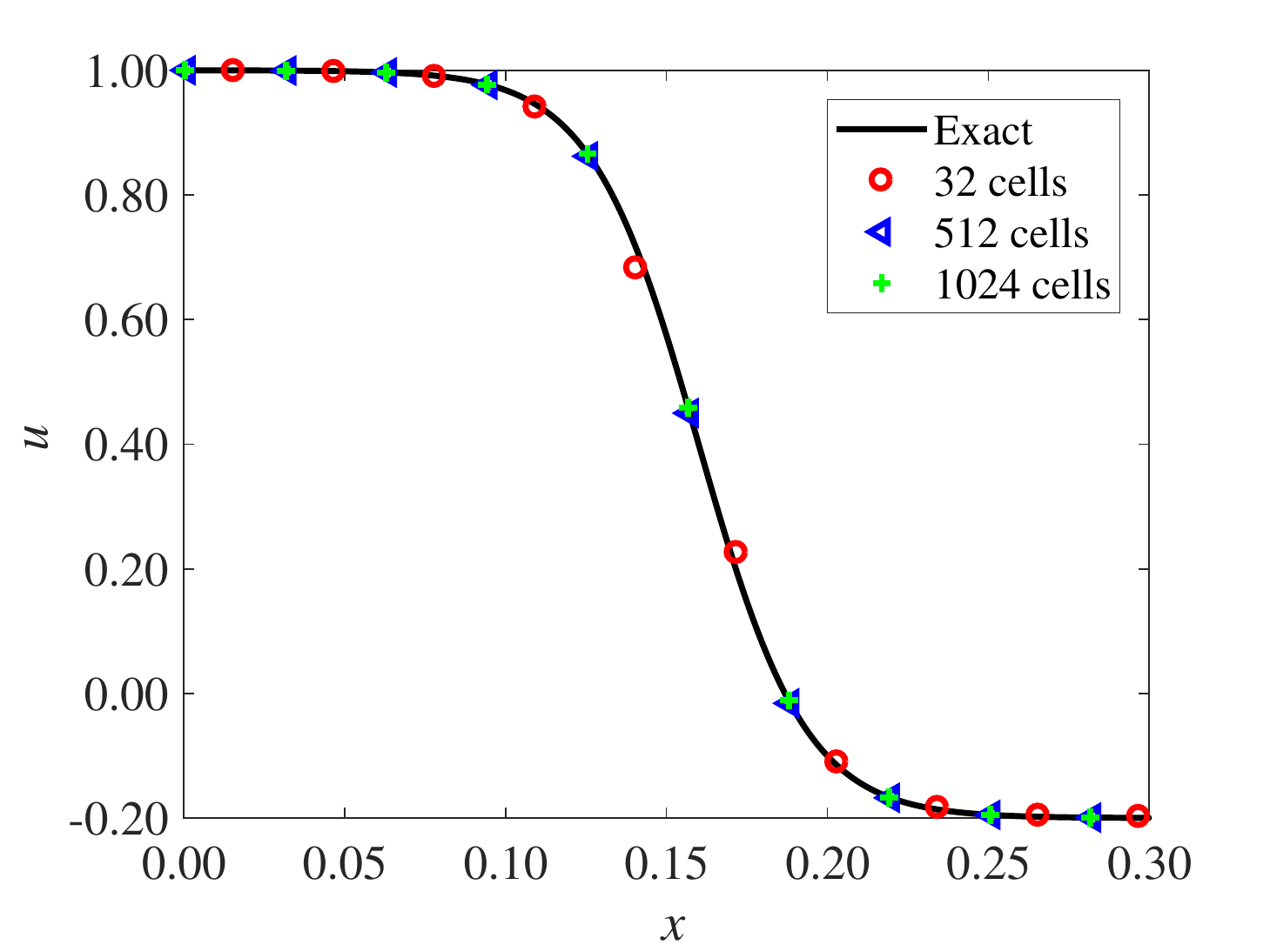}}
\caption{Convergence performance of the viscous part.}
\label{fig:visconvergence} 
\end{figure}

\subsection{The multi-component shock tube problem}\label{subsec:shock_interf_interaction}
To verify and compare different models, we consider a multicomponent shock tube problem with a resolved interface. The initial condition is given as follows
\begin{equation}
    \left(\rho, u, p, \gamma, C_v\right)= \begin{cases}\left(1000 \mathrm{~kg} \cdot \mathrm{m}^{-3}, 0 \mathrm{~m} \cdot \mathrm{s}^{-1}, 10^9 \mathrm{~Pa}, 4.4, 1606 \text{J}/(\text{kg}\cdot\text{K}) \right) & \text { for } x<0.7, \\ \left(50 \mathrm{~kg} \cdot \mathrm{m}^{-3}, 0, 10^5 \mathrm{~Pa}, 1.4, 714 \text{J}/(\text{kg}\cdot\text{K}) \right) & \text { for } 0.7 < x < 1. \end{cases}
\end{equation}

We have used two sets of grid, i.e., a coarse grid of 1200 uniform cells and a fine one of 12000. 
The numerical results obtained with different models are demonstrated in \Cref{fig:figSHOCK}. The exact solutions in this figure are those of the multi-component Euler equation without any relaxations and (numerical or physical) diffusion. Therefore, the solution of the hydrodynamic part (\cref{eq:HD}) without any relaxation agrees better with the exact solutions (\Cref{figSHOCK:dens1}). 

The numerical results of the one-temperature four-equation model and the temperature-disequilibrium model with the complete temperature relaxation deviate from the exact solution in the vicinity of the interface. Moreover, the one-temperature model introduce overshoot in temperature (\Cref{figSHOCK:temp1}).

\begin{figure}
\centering
\subfloat[density]{\label{figSHOCK:dens}\includegraphics[width=0.5\textwidth]{./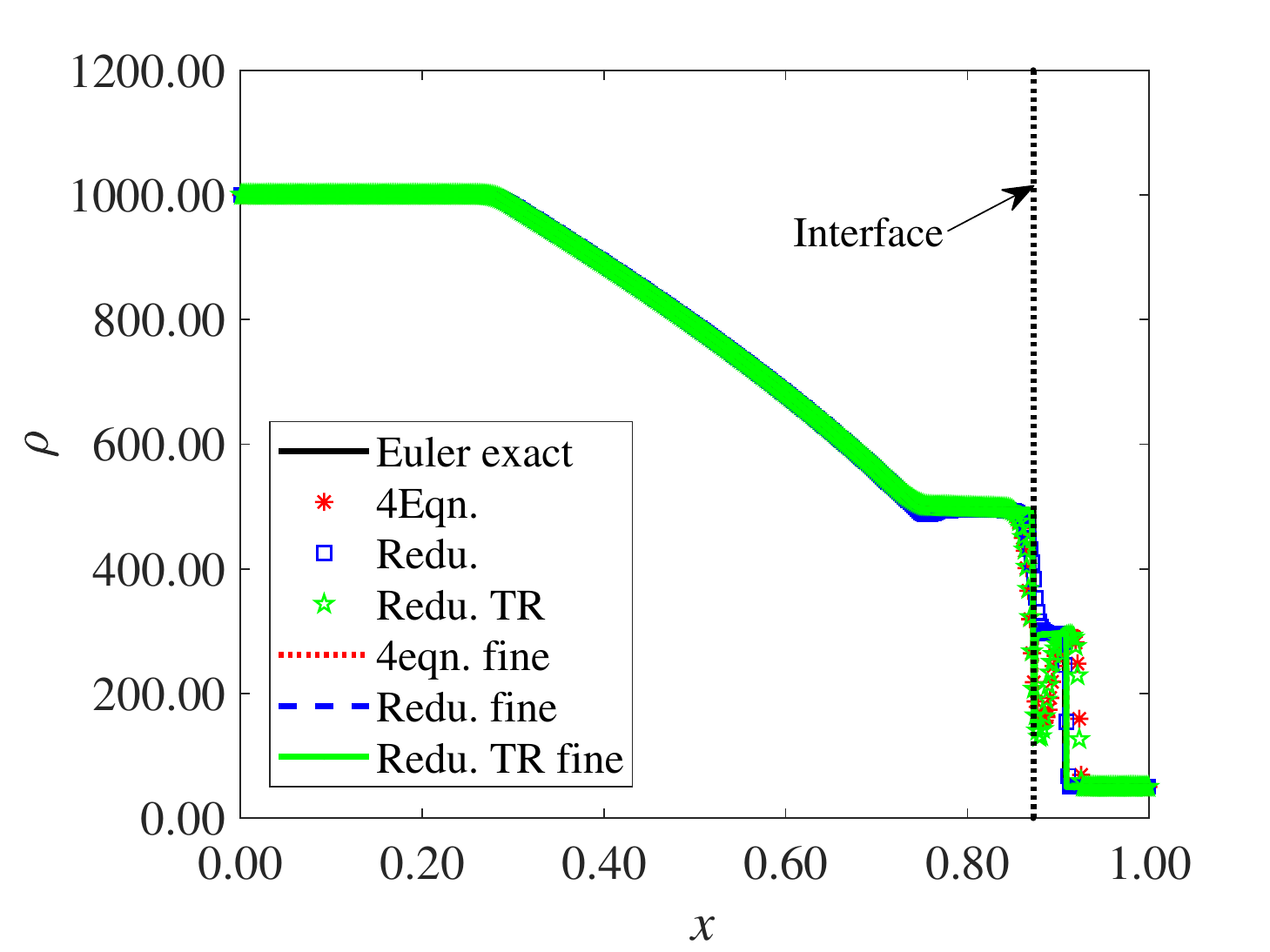}}
\subfloat[density, locally enlarged]{\label{figSHOCK:dens1}\includegraphics[width=0.5\textwidth]{./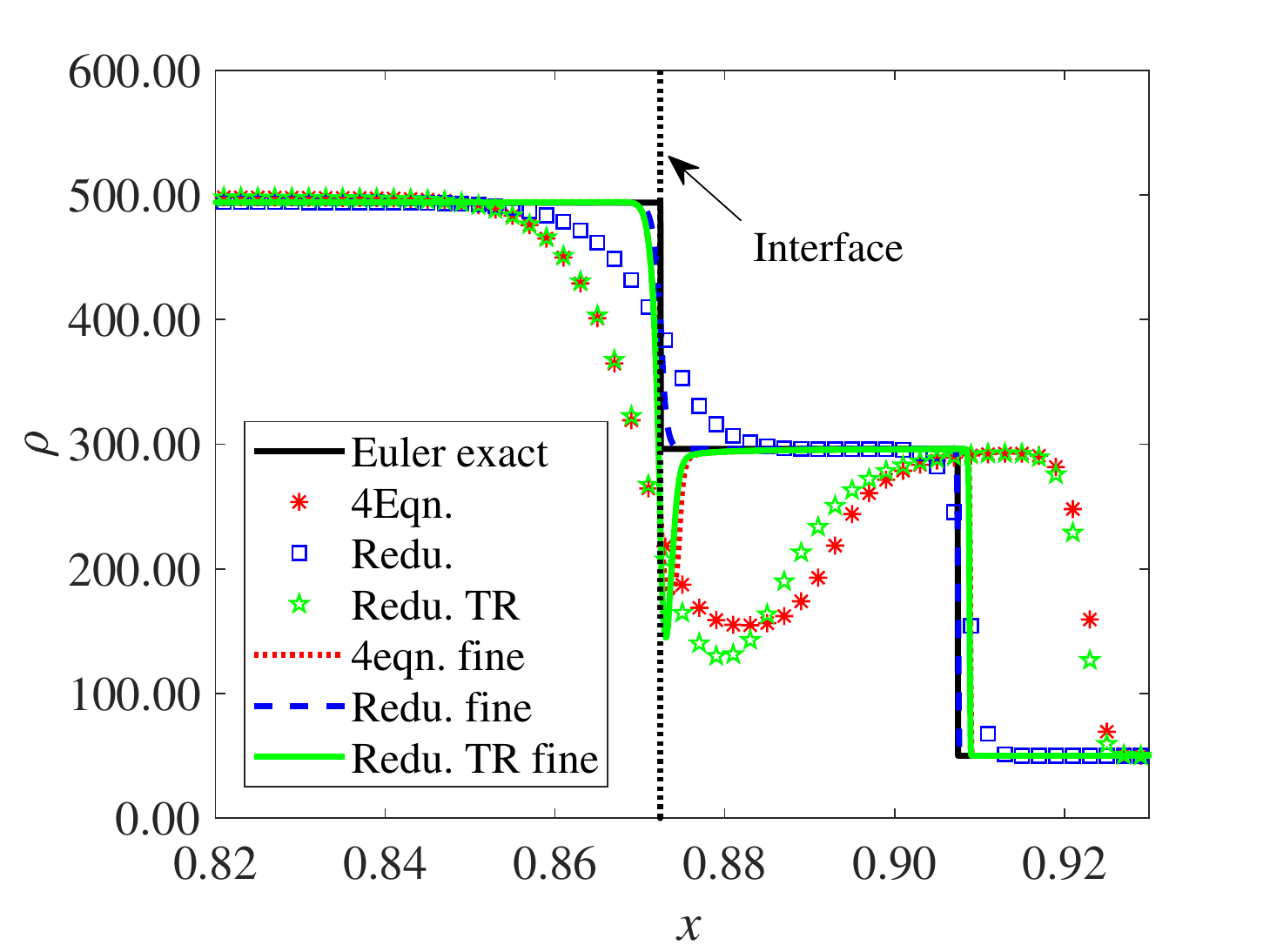}}\\
\subfloat[Temperature]{\label{figSHOCK:temp}\includegraphics[width=0.5\textwidth]{./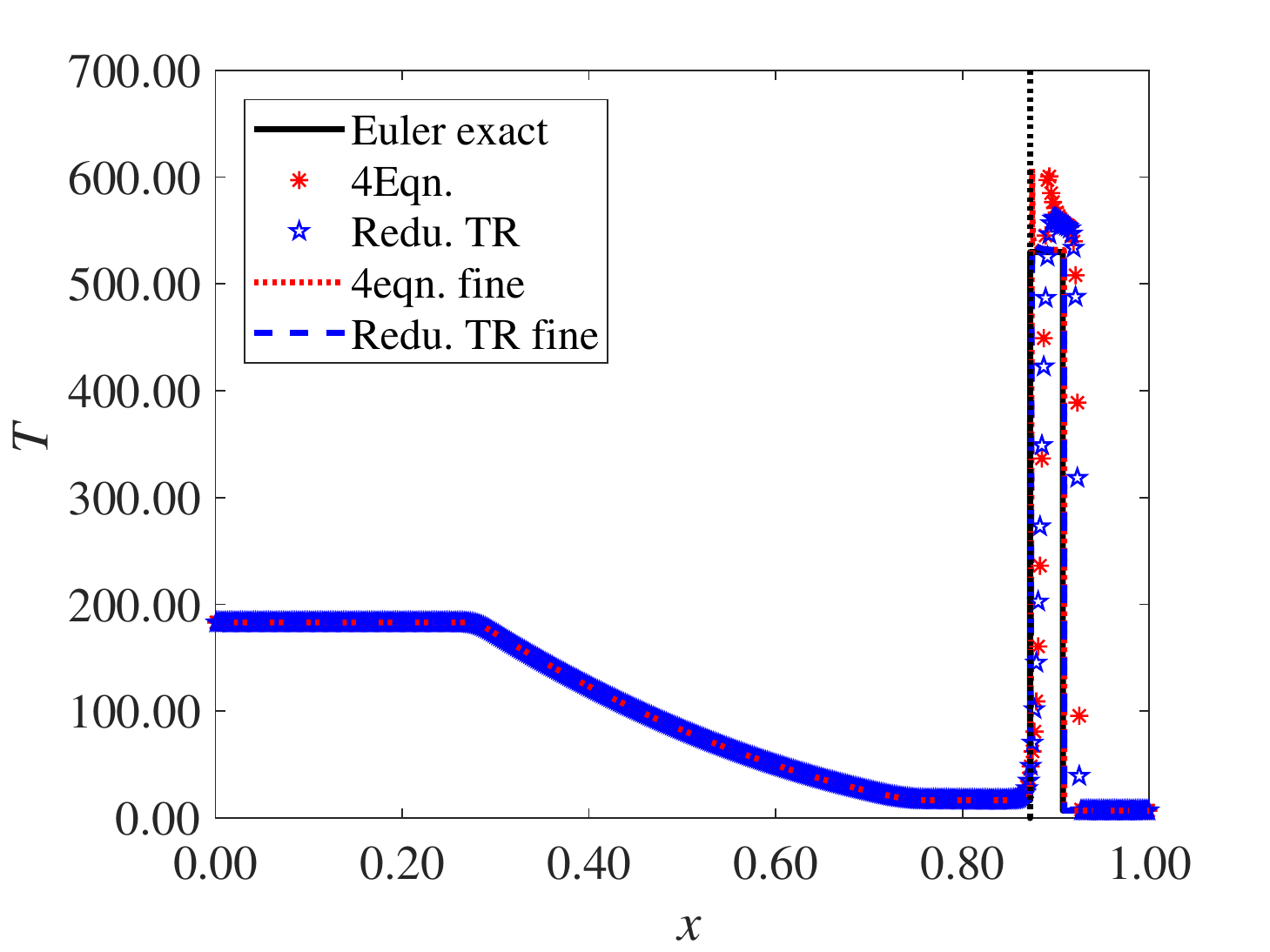}}
\subfloat[Temperature, locally enlarged]{\label{figSHOCK:temp1}\includegraphics[width=0.5\textwidth]{./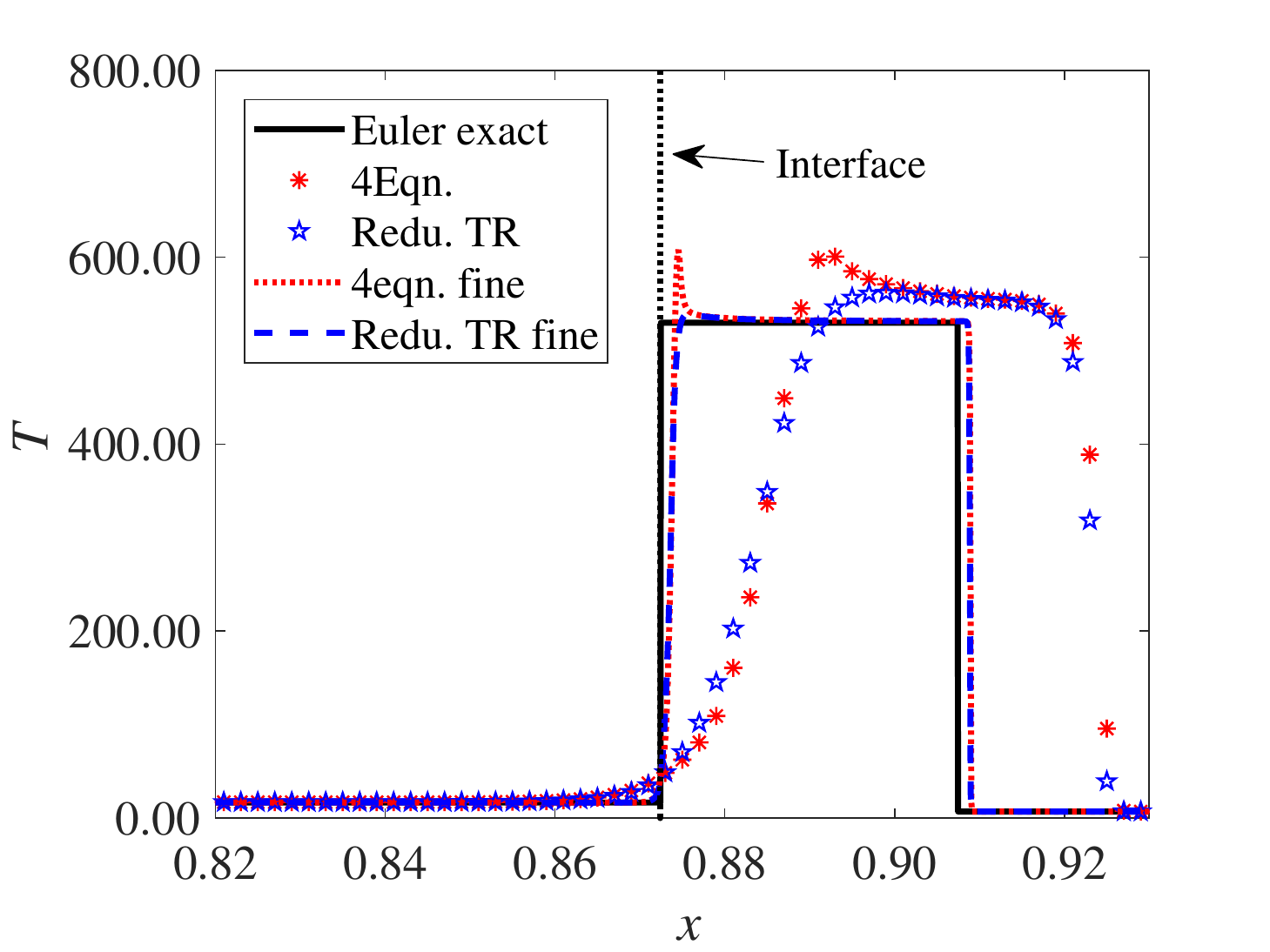}}
\caption{Numerical results obtained with different models on different grids for the shock tube problem.}
\label{fig:figSHOCK} 
\end{figure}

\subsection{Shock passage through a smeared interface}
The mass diffusion creates a physically smeared interface. The interaction between such smeared interface and the shock is experimentally investigated in literature, for example, the gas curtain experiment \cite{balakumar2008simultaneous}. The current test is a 1D analogue of this problem. The initial condition is demonstrated in \Cref{fig:figGC_initial}. A shock in air of Mach 5 impact the smeared SF$_6$ zone, within which the volume fraction is distributed as \cite{mikaelian1996numerical}
\begin{equation}
  \alpha_{\text{SF}_6} = \frac{C}{\text{exp}{\left[ (836 (x-0.02))^2\right]}}.
  \end{equation}  
Here we take the constant $C$ to be 0.95.

The incident shock transmits and reflects in its interaction with the smeared interface.  It compresses the smeared interface to form a thin spike in density. We compare the solutions obtained with the one-temperature four-equation model and the reduced model without/without the temperature relaxation. The numerical results for density, temperature and mass fraction are compared in \Cref{figGC:dens,figGC:dens1}, \Cref{figGC:temp,figGC:temp1} and \Cref{figGC:Y,figGC:Y1}, respectively. We see that the the solutions of the reduced model without the temperature relaxation deviate from those with the temperature relaxation being included implicitly (for the four-equation model) or explicitly (for the reduced model). From \Cref{figGC:temp1} one can observe obvious oscillations in the solutions of the four-equation model.

\begin{figure}[htbp]
\centering
\label{figGC:dens}\includegraphics[width=0.5\textwidth]{./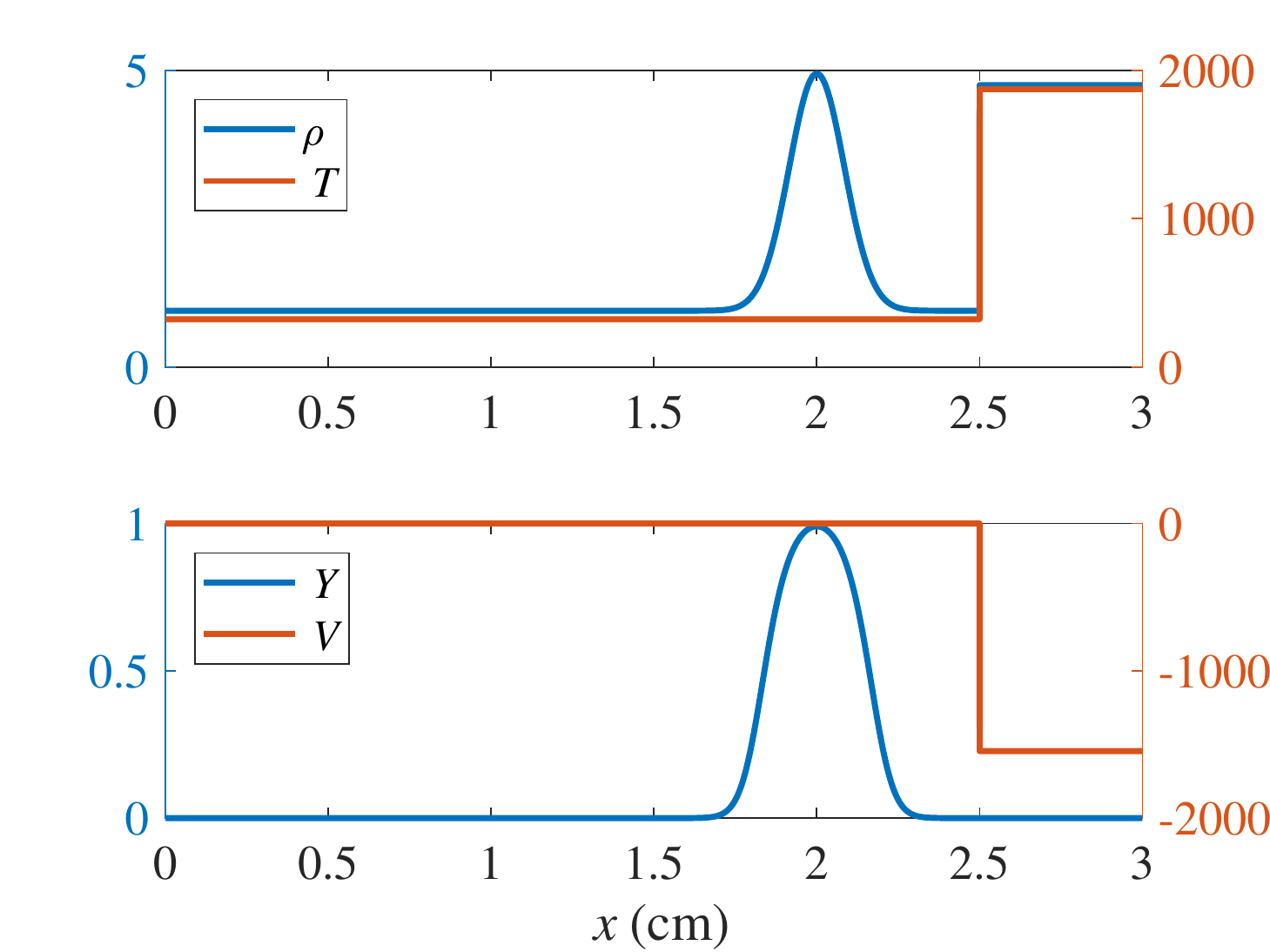}
\caption{Initial condition for the problem of shock passage through a smeared interface.}
\label{fig:figGC_initial} 
\end{figure}

\begin{figure}[htbp]
\centering
\subfloat[density]{\label{figGC:dens}\includegraphics[width=0.5\textwidth]{./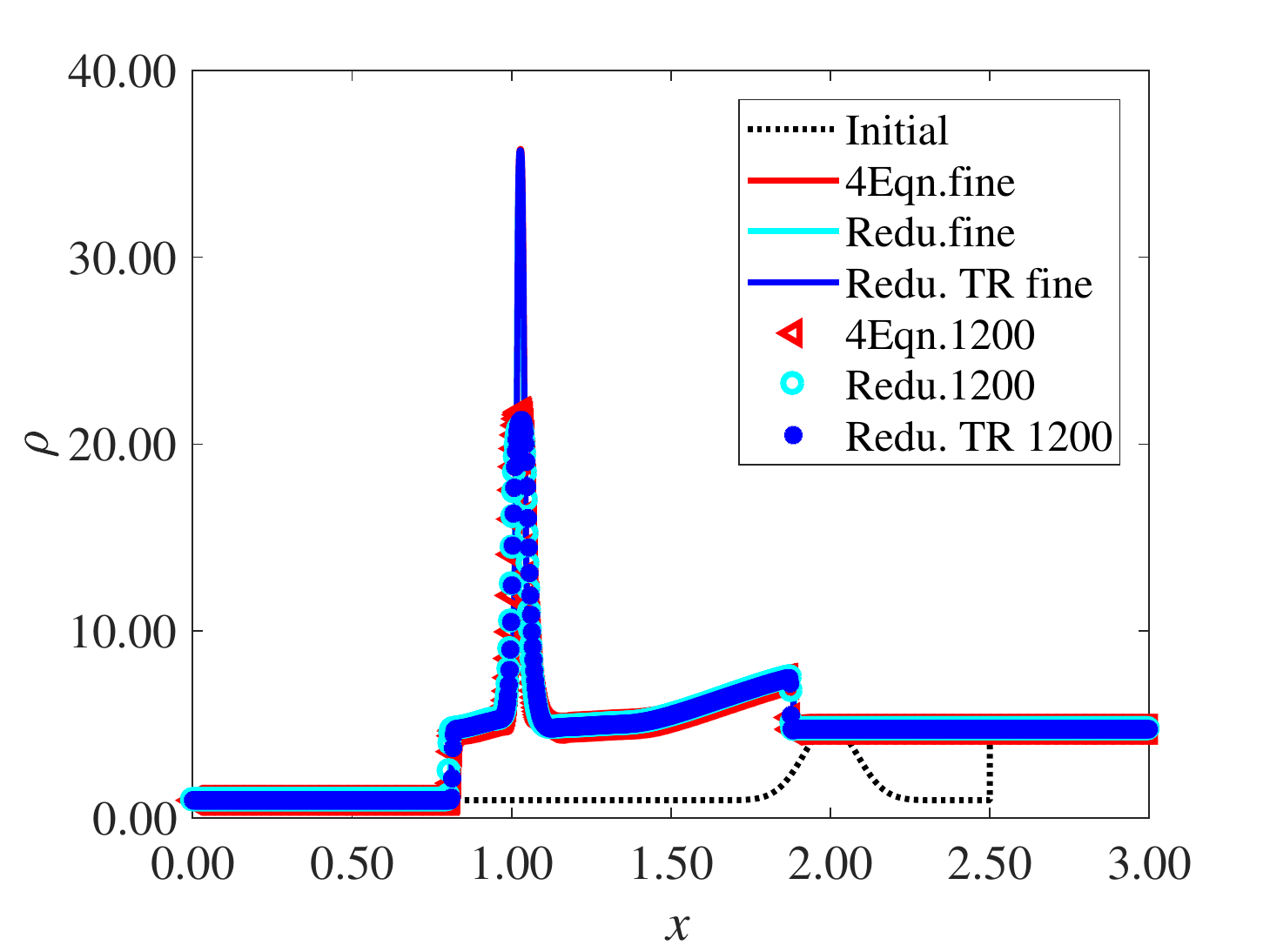}}
\subfloat[density, locally enlarged]{\label{figGC:dens1}\includegraphics[width=0.5\textwidth]{./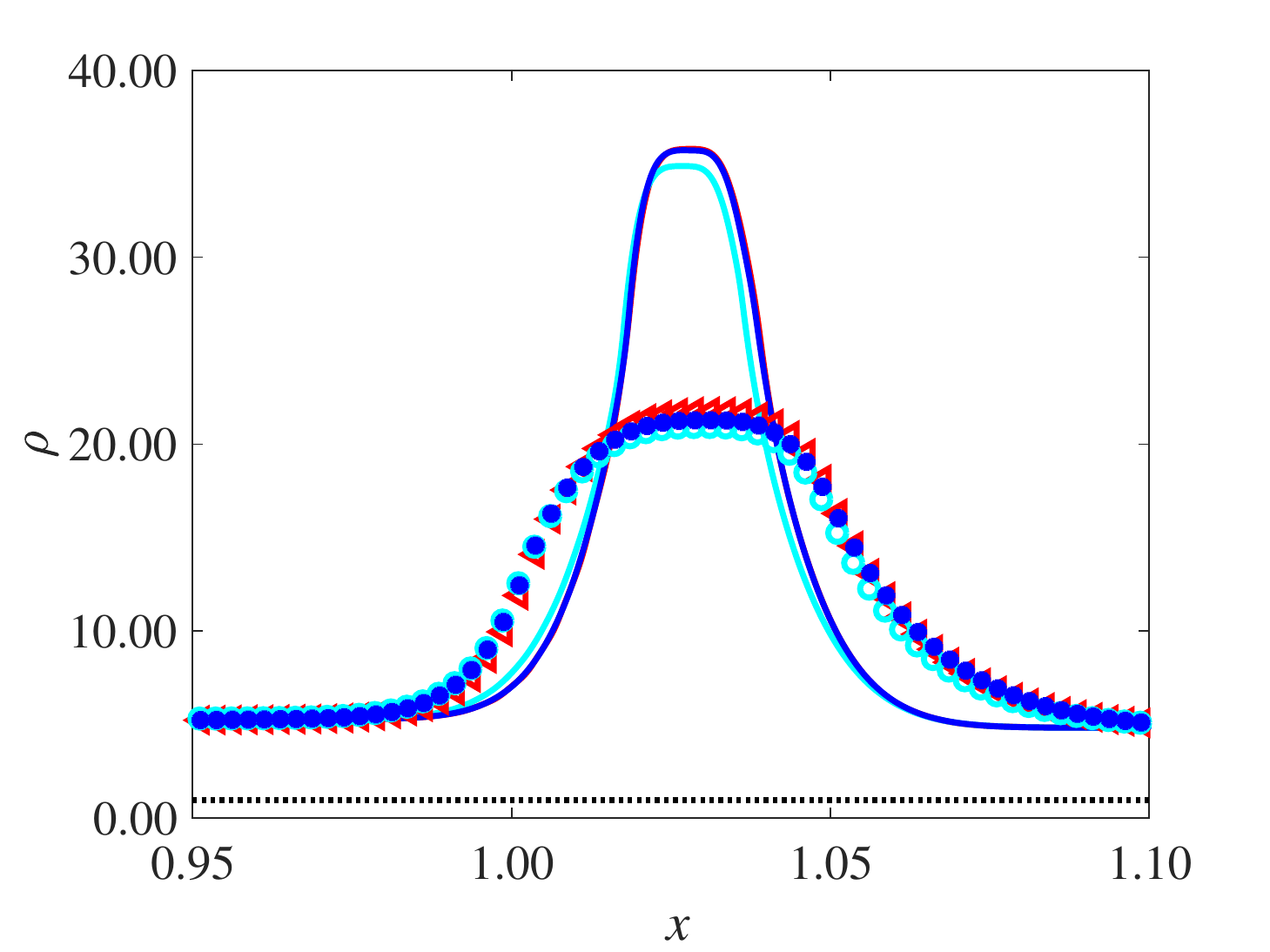}}\\
\subfloat[Temperature]{\label{figGC:temp}\includegraphics[width=0.5\textwidth]{./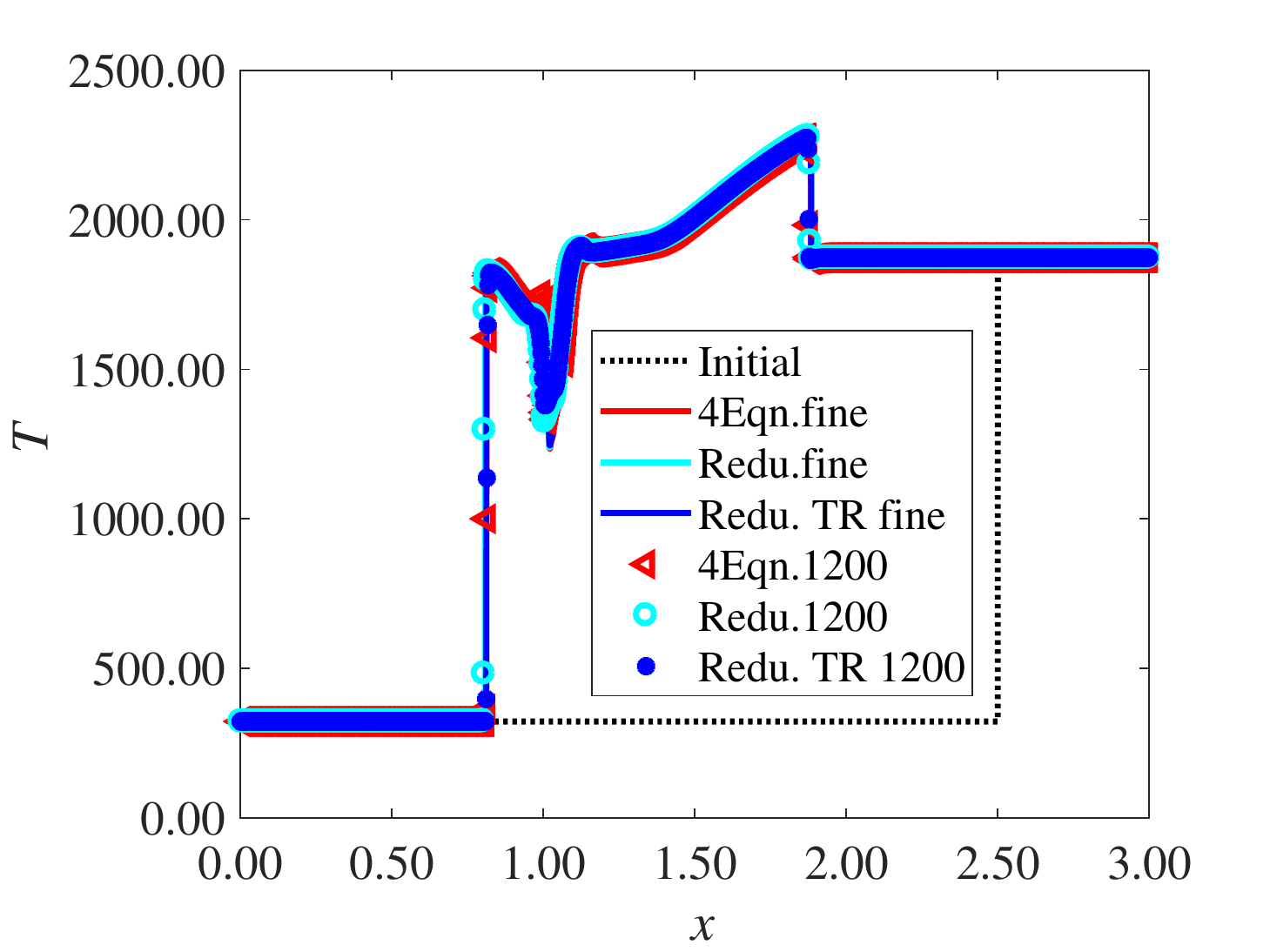}}
\subfloat[Temperature, locally enlarged]{\label{figGC:temp1}\includegraphics[width=0.5\textwidth]{./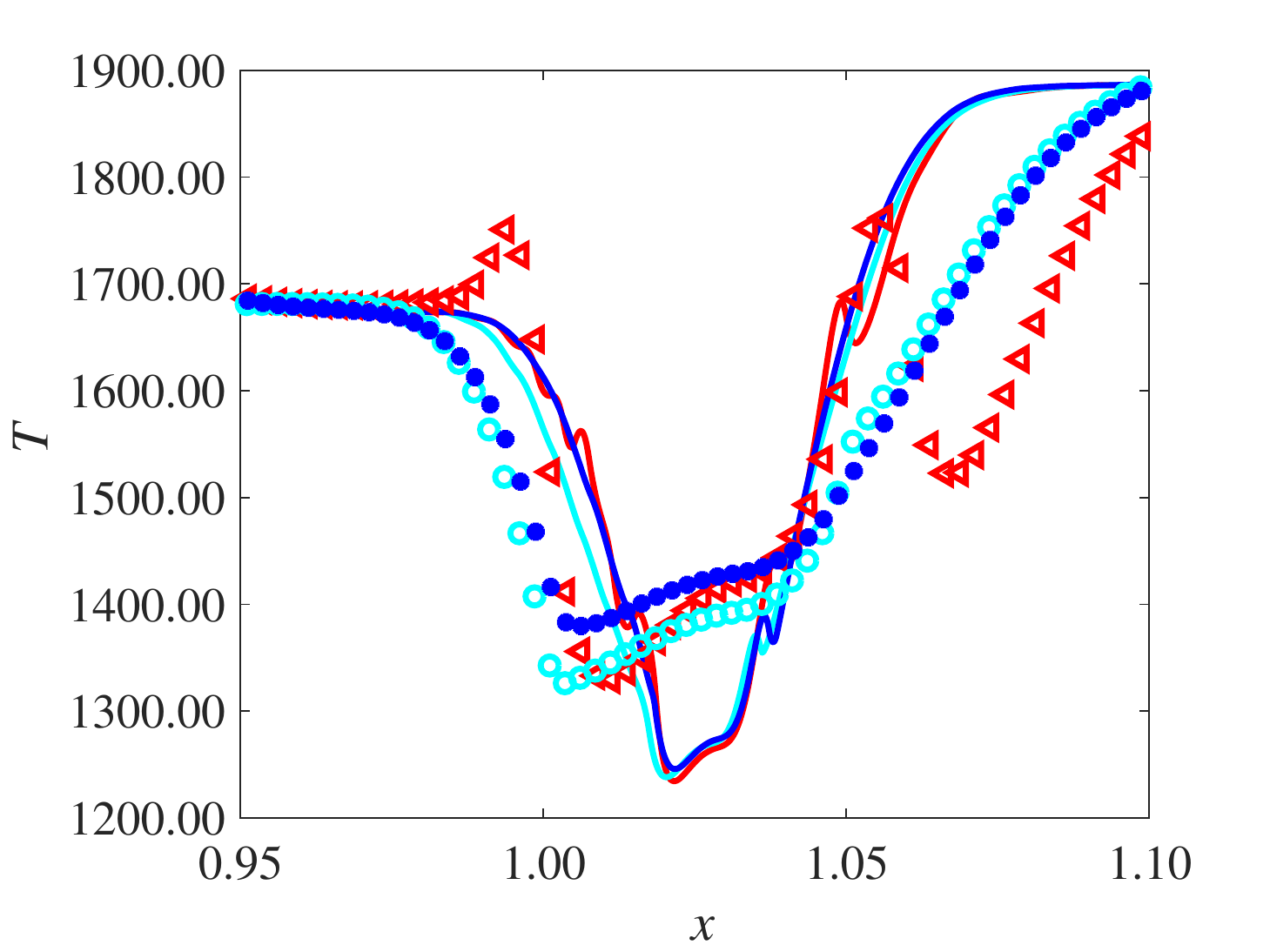}}\\
\subfloat[Mass fraction]{\label{figGC:Y}\includegraphics[width=0.5\textwidth]{./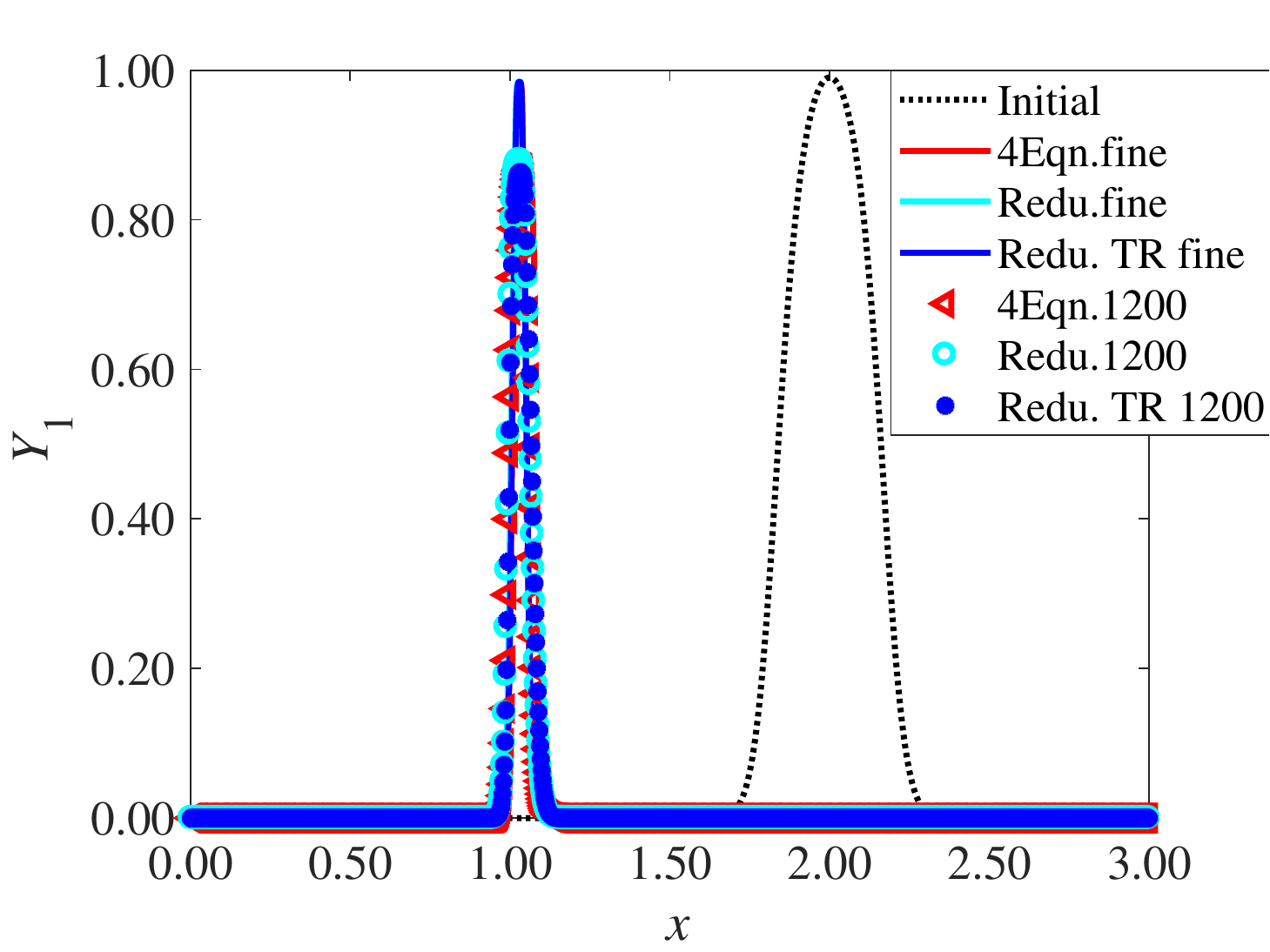}}
\subfloat[Mass fraction, locally enlarged]{\label{figGC:Y1}\includegraphics[width=0.5\textwidth]{./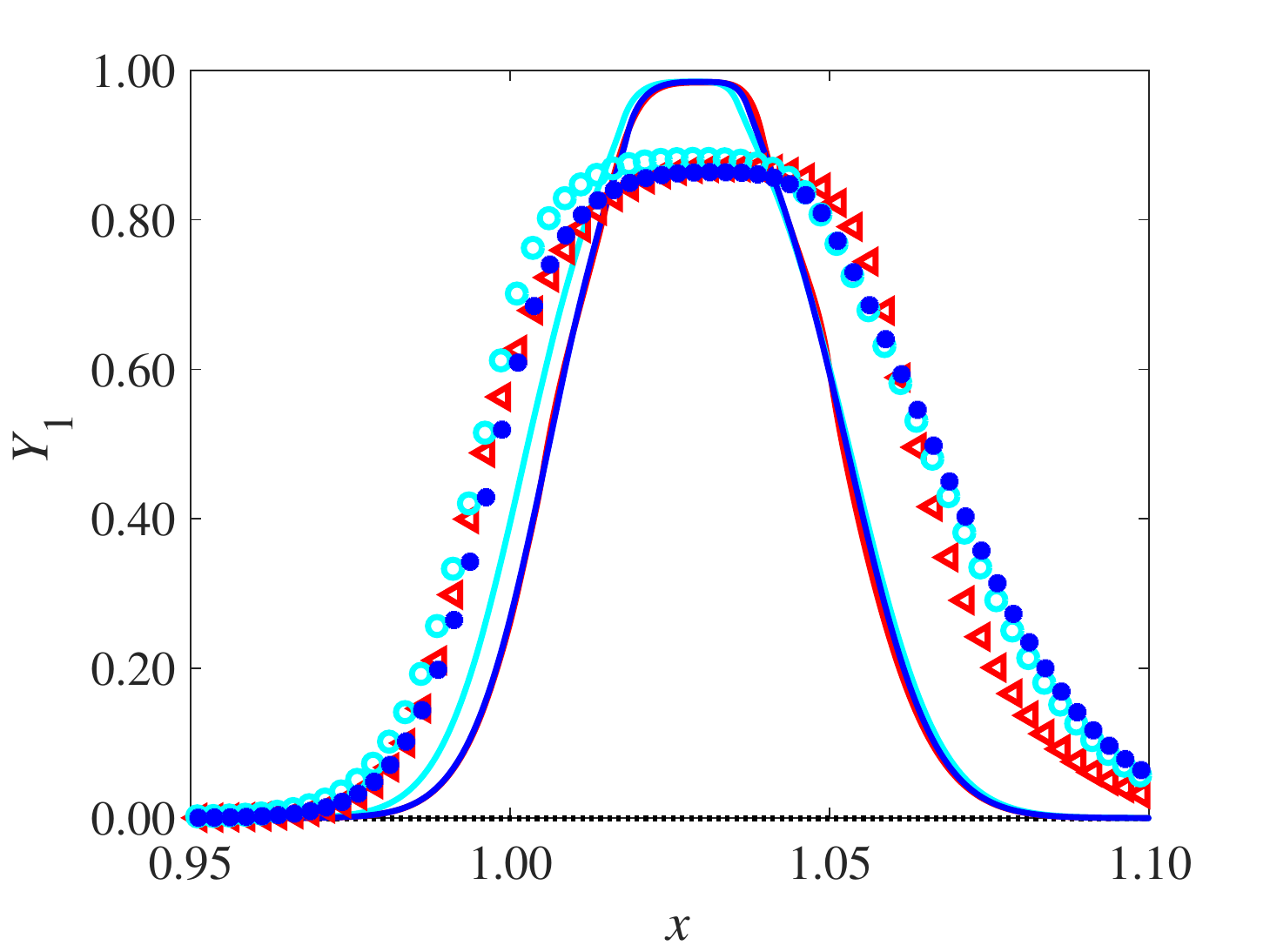}}
\caption{Numerical results for the shock passage through a smeared interface.}
\label{fig:figGC} 
\end{figure}

\subsection{The shock wave passage through a helium bubble}\label{subsec:HeB_temp}
In this section, we consider the interaction of a shock (Mach number 1.22) and a cylindrical helium bubble \cite{Haas1987,2018Capuano}. The computational domain is of size $22.25 \mathrm{~cm} \times 8.90 \mathrm{~cm}$. The bubble with the diameter 5cm is initially located at $(13.80 \mathrm{~cm}, 4.45 \mathrm{~cm})$. The initial data is given as:
\begin{equation}
    \left(\rho, u, p, \gamma, C_v\right)= \begin{cases}\left(1.66 \mathrm{~kg} \cdot \mathrm{m}^{-3},-114 \mathrm{~m} \cdot \mathrm{s}^{-1}, 159080.98 \mathrm{~Pa}, 1.4, 2430.35 \text{J}/(\text{Kg}\cdot\text{K}) \right) & \text { for } x>x_{s} \\ \left(1.2062 \mathrm{~kg} \cdot \mathrm{m}^{-3}, 0,101325 \mathrm{~Pa}, 1.4, 2430.35 \text{J}/(\text{Kg}\cdot\text{K}) \right) & \text { in air, for } x \leq x_{s} \\ \left(0.2204 \mathrm{~kg} \cdot \mathrm{m}^{-3}, 0,101325 \mathrm{~Pa}, 1.6451, 717.50 \text{J}/(\text{Kg}\cdot\text{K})\right) & \text { inside the helium bubble }\end{cases}
\end{equation}
where the $x_s = 16.80$cm is the initial position of the left-going shock wave. 

We compute this problem including viscosity, heat conduction and mass diffusion. The viscosity coefficient is determined with  Sutherland’s equation \cite{Sutherland}. The heat conduction coefficient is determined in the same way as in \cite{2018Capuano}. As for the mass diffusivity, since its dependence on temperature and pressure is $D \sim {T^{3/2}}/{p}$, we calculate it with
\begin{equation}
    D = D_0 \frac{T^{3/2}/p}{T_0^{3/2}/p_0},
\end{equation}
where $D_0$ is a reference value at $(p_0, T_0)$. We use $D_0 = 73.35\times 10^{-6}$m$^2$/s at $p=1$atm and $T=298$K \cite{wasik1969measurements}.

To verify the numerical results we compare the numerically obtained interface motion of the bubble along the horizontal centreline with those of \citep{2018Capuano} (\Cref{fig:interfPos}). Note that computations of \citep{2018Capuano} is performed without mass diffusivity. Since the shock-interface interaction time is very short, the impact of mass diffusivity on the interface motion is marginal. However, it does modify the small-scale flow structures (\Cref{fig:diff_vs_nodiff}).  From the 1D slice along the horizontal centreline, one can see that the diffusivity smears the density profile.

\begin{figure}[htbp]
\centering
\includegraphics[width=0.5\textwidth]{./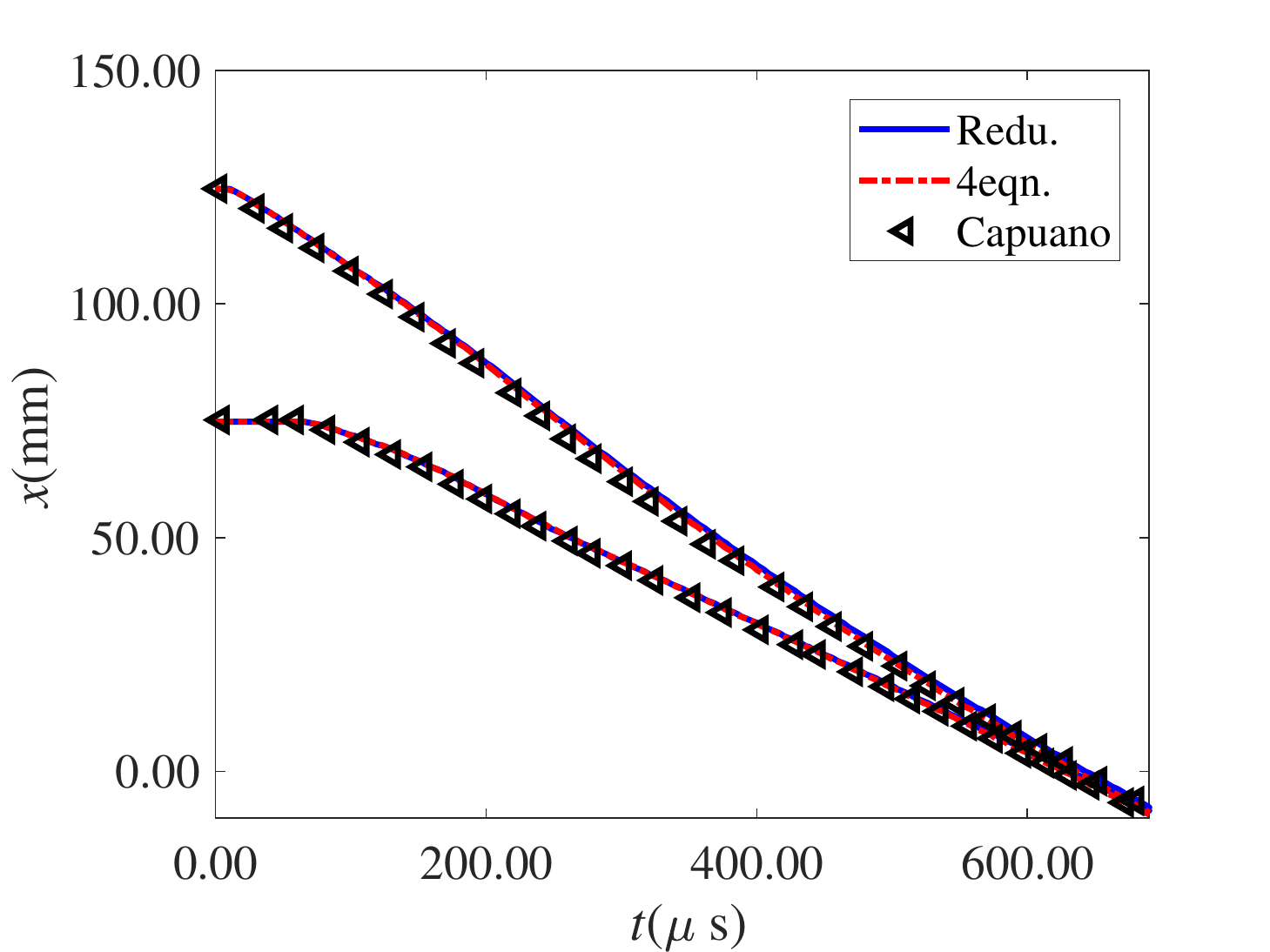}
\caption{The time evolution of the interface position along the horizontal centreline. The lines on the top(bottom) correspond to the positions of the right(left) interface.}
\label{fig:interfPos} 
\end{figure}

\begin{figure}[htbp]
\centering
\includegraphics[width=0.5\textwidth]{./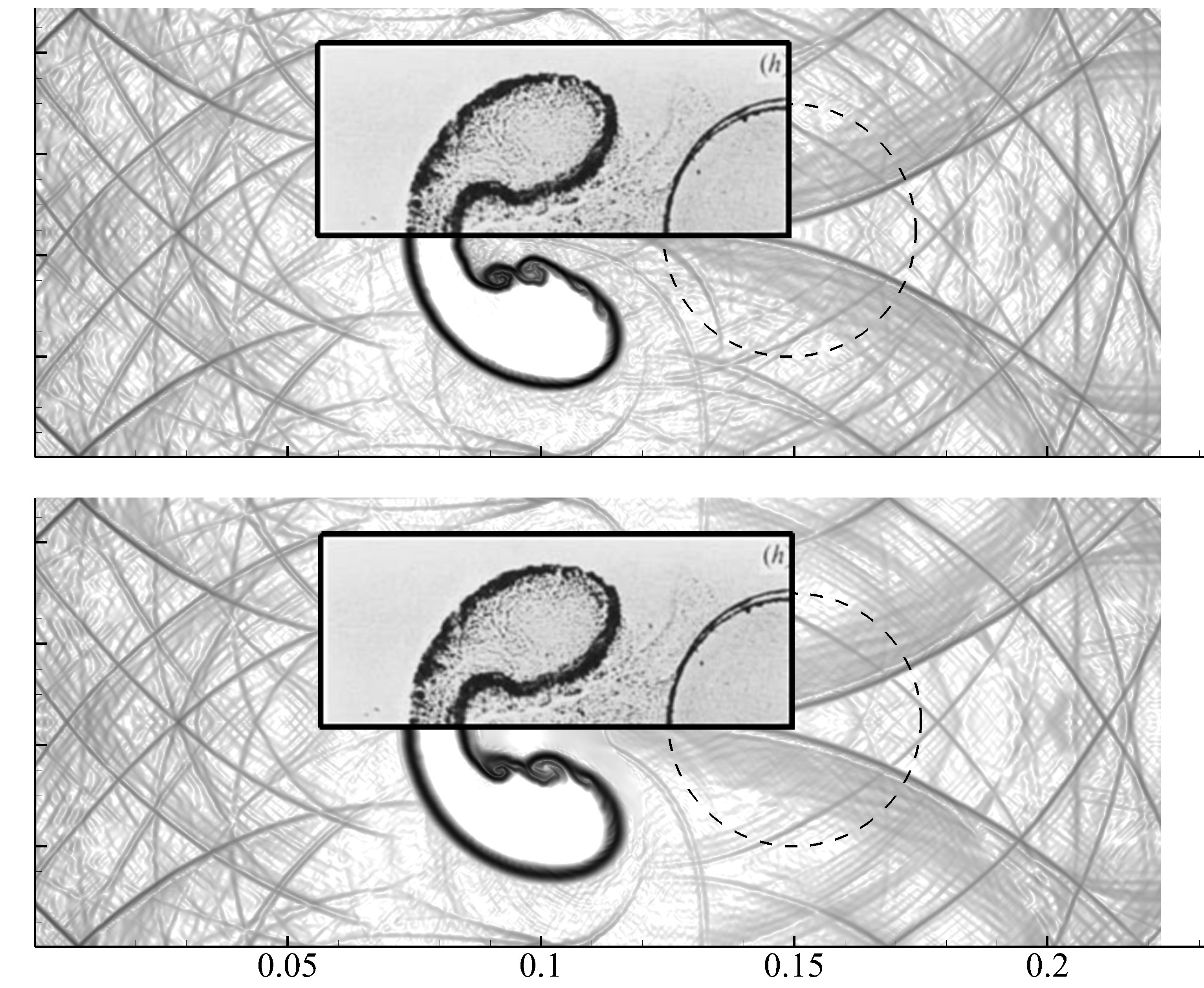}
\caption{The numerical Schlieren obtained with the four-equation model (top) and the five-equation model (bottom) compared with the experimental shadow graph (inside the rectangular area). The dashed circle is the initial position of the bubble.}
\label{fig:HeB_sh} 
\end{figure}

\begin{figure}[htbp]
\centering
\subfloat[The solutions for density in the cases of diffusivity being excluded (left) and included (right)]{\includegraphics[width=0.7\textwidth]{./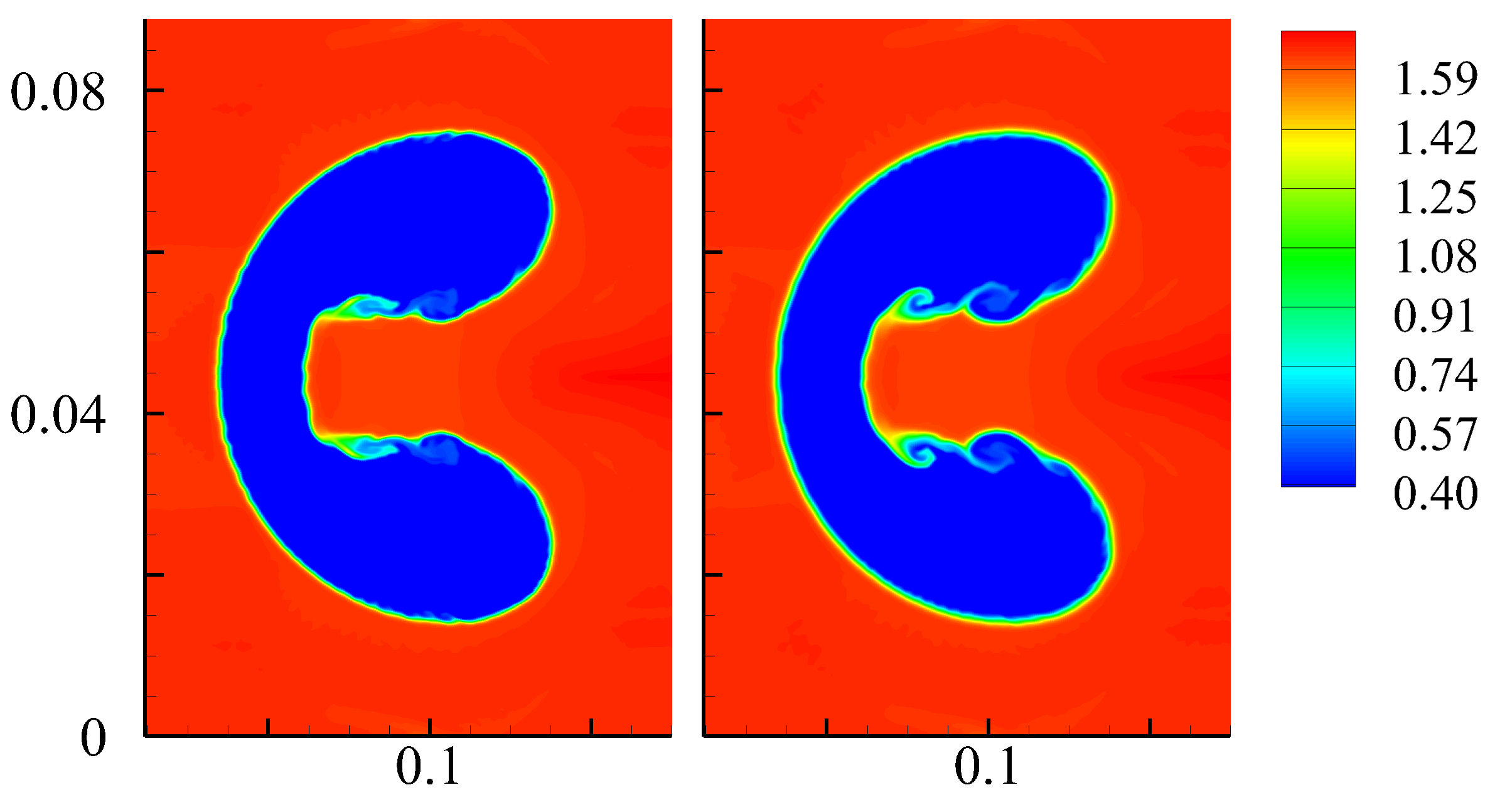}}\\
\subfloat[The density distribution along the horizontal centreline]{\includegraphics[width=0.5\textwidth]{./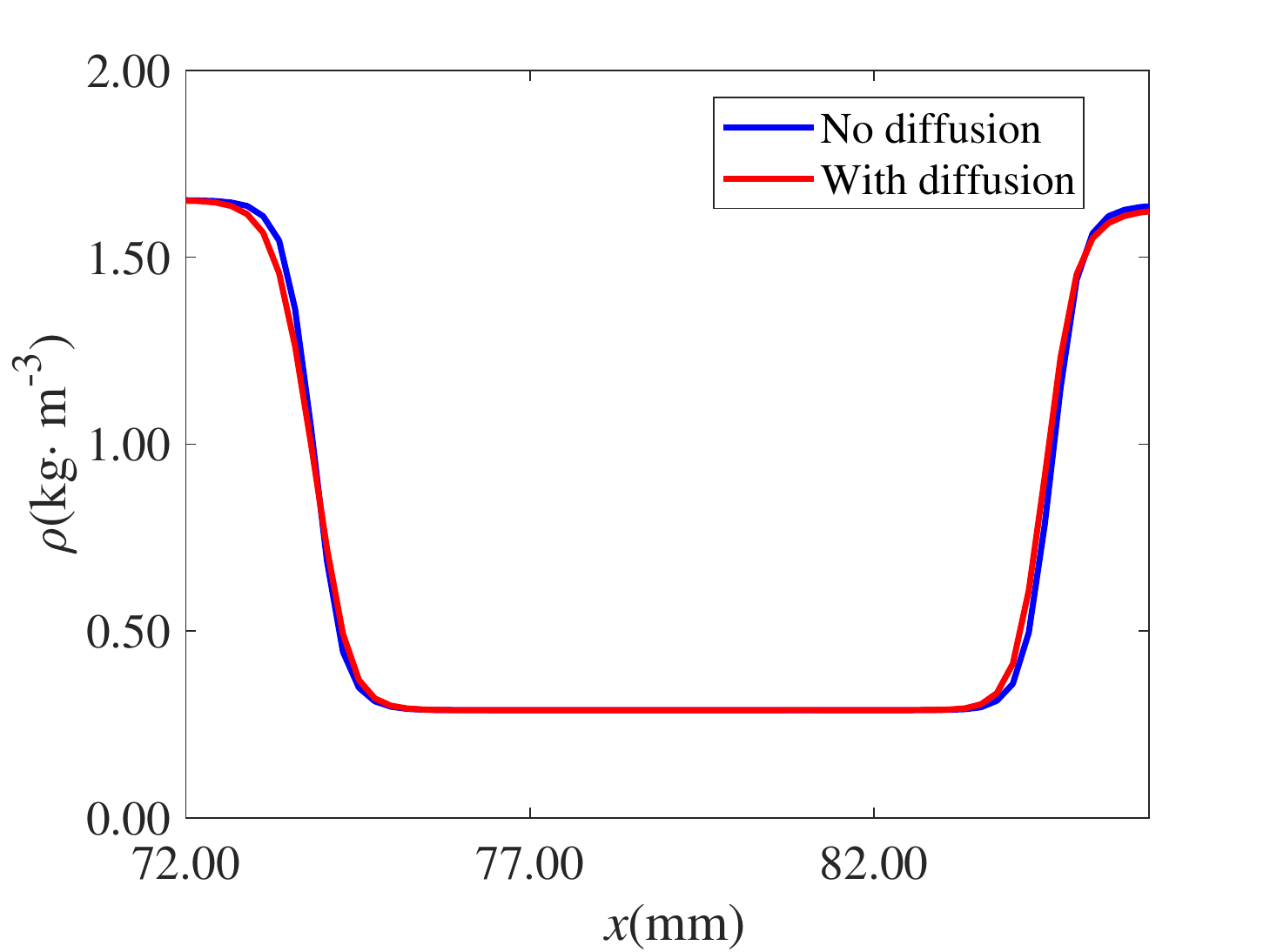}}
\caption{The experiment of RM instability on Omega.}
\label{fig:diff_vs_nodiff} 
\end{figure}

The solutions for the temperature obtained with different models  in the neighbourhood of  the bubble are compared in  \Cref{fig:HeB_temp}. Serious non-physical oscillations arise in across the interface in the solutions of the four-equation model, which is more clear in the 1D distribution along the horizontal centreline in \Cref{HeB_temp:1D}. When temperature diffusivity is significant enough, this non-physical error may have a major impact on the convergence of the model.

\begin{figure}[htbp]
\centering
\subfloat[Temperature distribution (at the time moment $t=427\mu$s timing from the beginning of the shock-interface interaction) obtained with the four-equation model (left) and the reduced five-equation model (right).]{\label{HeB_temp:2D}\includegraphics[width=0.7\textwidth]{./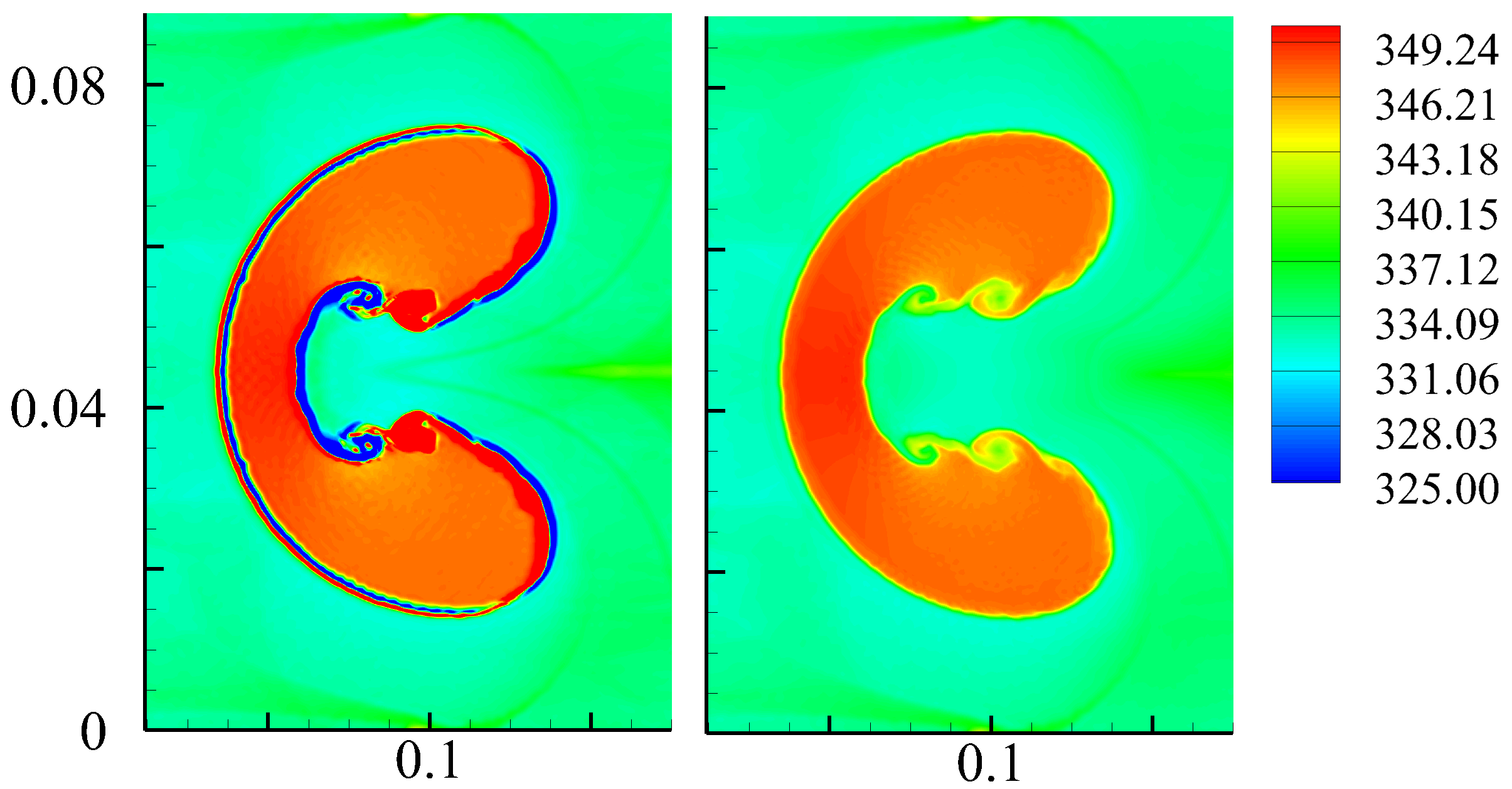}}\\
\subfloat[The temperature distribution along the centreline of the bubble]{\label{HeB_temp:1D}\includegraphics[width=0.5\textwidth]{./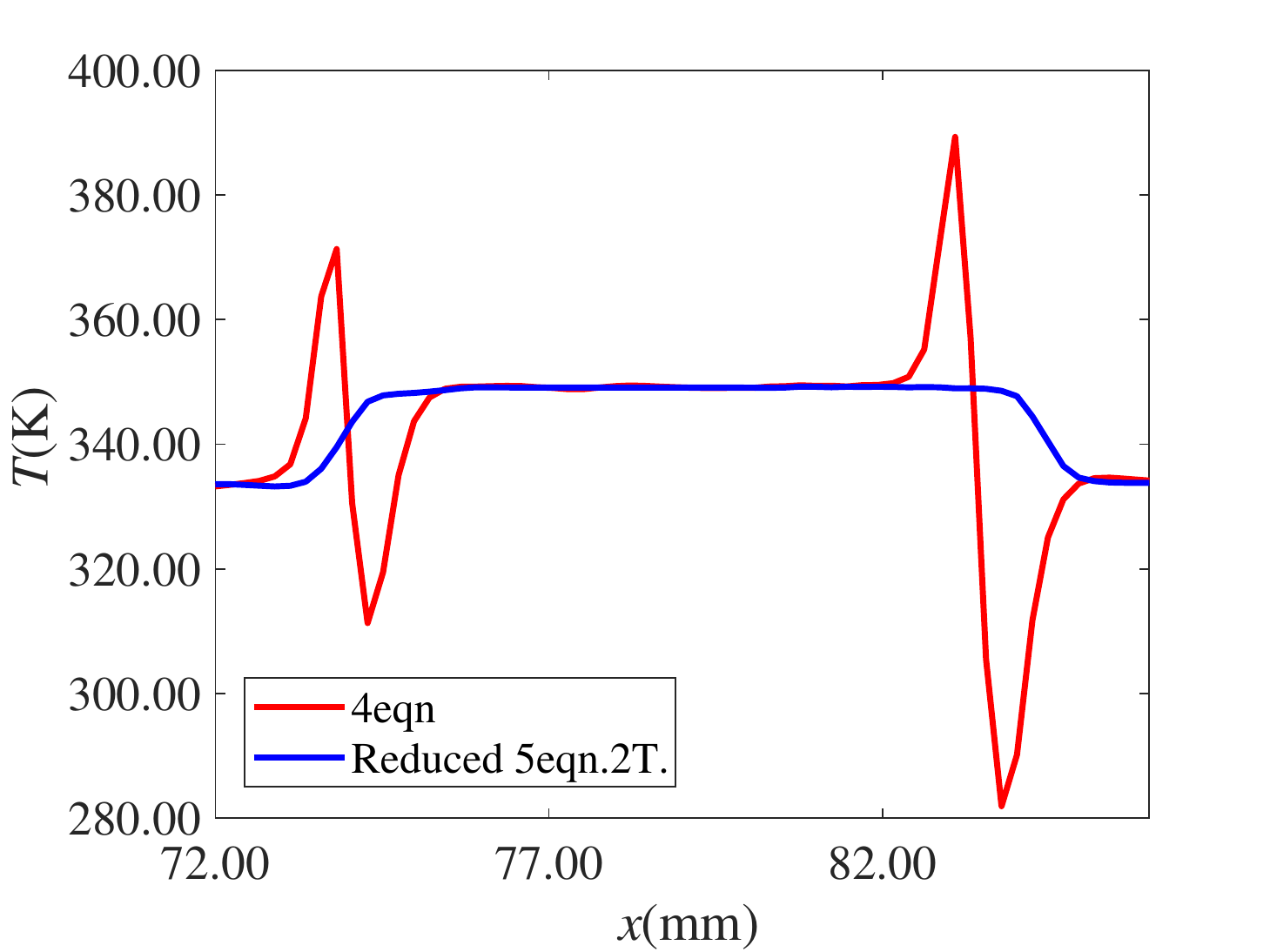}}
\caption{Numerical results for temperature.}
\label{fig:HeB_temp} 
\end{figure}

\subsection{The laser-driven RM instability problem}\label{subsec:ABLRM}
In this section we consider the laser-driven RM instability problem conducted on OMEGA \cite{Robey2003The,miles2004numerical}. The schematic for the experiment is demonstrated in \Cref{fig:ABLsetup}. The multi-material target is assembled into a Beryllium shock tube of diameter 800$\mu$m. The target is made up of two sections: the pusher section of length 150$\mu$m on the right and the payload section of length 19mm on the left.  A strong shock is generated by laser ablation of the pusher section that consists of the polyimide (C$_{22}$H$_{10}$N$_{2}$O$_{4}$) and the polystyrene (C$_{500}$H$_{457}$Br$_{43}$). This section is modeled as a homogeneous material with density 1.41$\text{g}/\text{cm}^3$ and $\gamma = 5/3$. The remainder of the target is carbon foam payload (C-foam, $\rho = 0.1\text{g}/\text{cm}^3, \; \gamma = 1.4$) . The interface between two sections is initially perturbed as a cosine function with wavelength 50$\mu$m and amplitude 2$\mu$m. The laser has a wavelength of 0.351$\mu$m and average intensity $6\times10^{14}$W/cm$^2$. The shock in the central area has good planarity, and thus is approximated as a planar one.

Note that there are many complicated experimental uncertainties that are difficult to account for in numerical simulations, for example, the pre-heating state of the target and the laser energy loss. Moreover, with the polytropic equation of state, the true state of the materials are described with limited accuracy.  Moreover, the experiment diagnosis also introducesss some error. Due to these uncertainties, numerical simulations can hardly reproduce the experimental conditions. We set  the initial temperature to be 290K based on a trial-and-error approach.

Our simulation focuses on one period of the perturbation  with periodical boundary conditions being imposed on sides perpendicular to the incident shock. The present simulation includes a complete physical processes: laser energy deposition, heat conduction, viscosity and mass diffusion. The laser energy deposits in a 20$\mu$m area to the right of the critical density $\rho_{crt}$ of the ablator. According to  the inverse bremsstrahlung absorption theory the critical density is $\rho_{crt} = 2.78\times 10^{-2}$g/cm$^{3}$. The heat conduction coefficient of the plasma is calculated with the Spitzer-Harm model \cite{Spitzer1953}. The plasma viscosity is modeled with Clerouin's model \cite{clerouin1998viscosity}. As for the mass diffusivity, we use the estimates in \cite{robey2004effects}, prior to the interaction of shock and the interface, the materials are in solid states and the mass diffusivity is negligible. After the shock arrival (at $t \approx 2$ns), the Schmidt number ($Sc = \nu / D$, $\nu$ is the average kinetic viscosity) is almost constant 1.  In this estimation, the mass diffusivity is determined with the model of Paquette \cite{paquette1986diffusion}.

Computations are performed on a grid of $2880 \times 60$ cells with the proposed reduced model and the conservative four-equation model. The numerical results for density, temperature and mass fraction are displayed in \Cref{fig:figABL_comp}. It can be seen that the solutions of both models have similar flow structures. The shock wave and the interface move slightly faster in the solutions of the reduced model than that in the solutions of the four-equation model.

\begin{figure}[htbp]
\centering
\includegraphics[width=0.4\textwidth]{./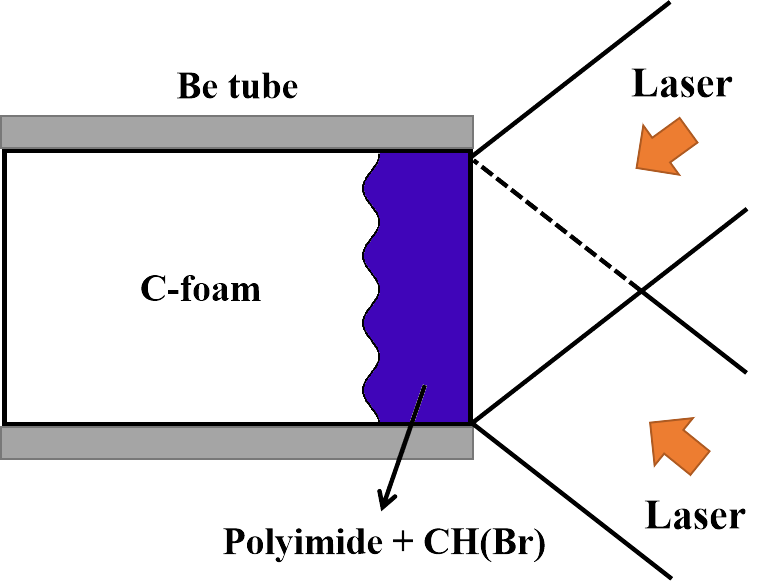}
\caption{The schematic of the laser-driven  RM instability experiment on Omega.}
\label{fig:ABLsetup} 
\end{figure}

\begin{figure}[htbp]
\centering
\subfloat[Density]{\label{figABL_comp:dens}\includegraphics[width=0.98\textwidth]{./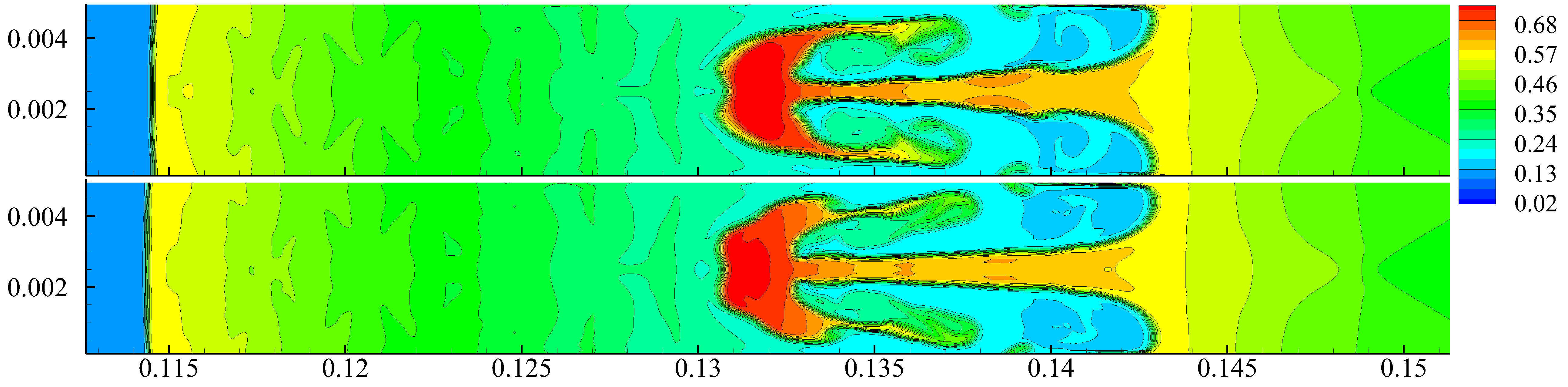}}\\
\subfloat[Temperature]{\label{figABL_comp:temp}\includegraphics[width=0.98\textwidth]{./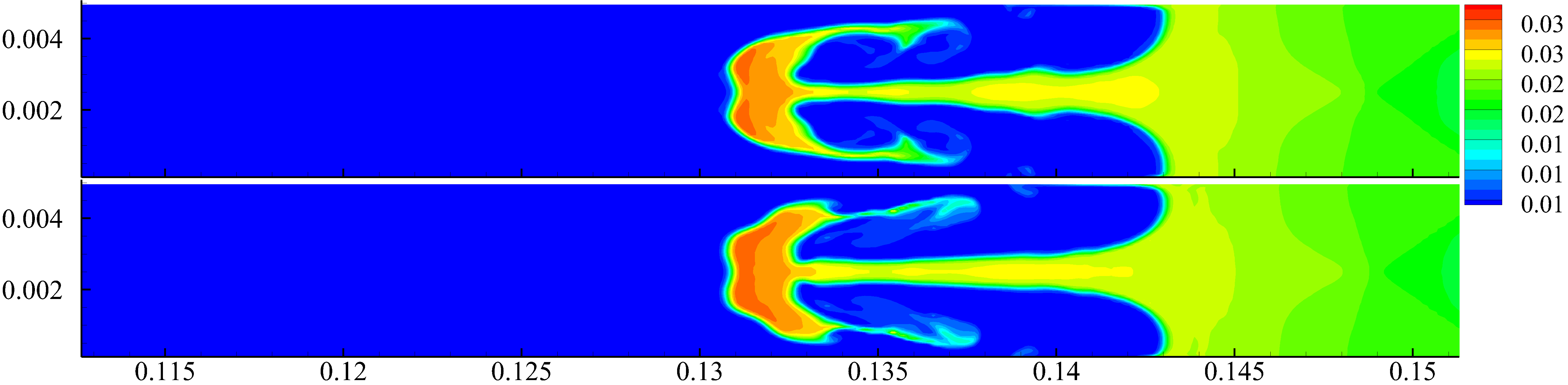}}\\
\subfloat[Mass fraction]{\label{figABL_comp:y}\includegraphics[width=0.98\textwidth]{./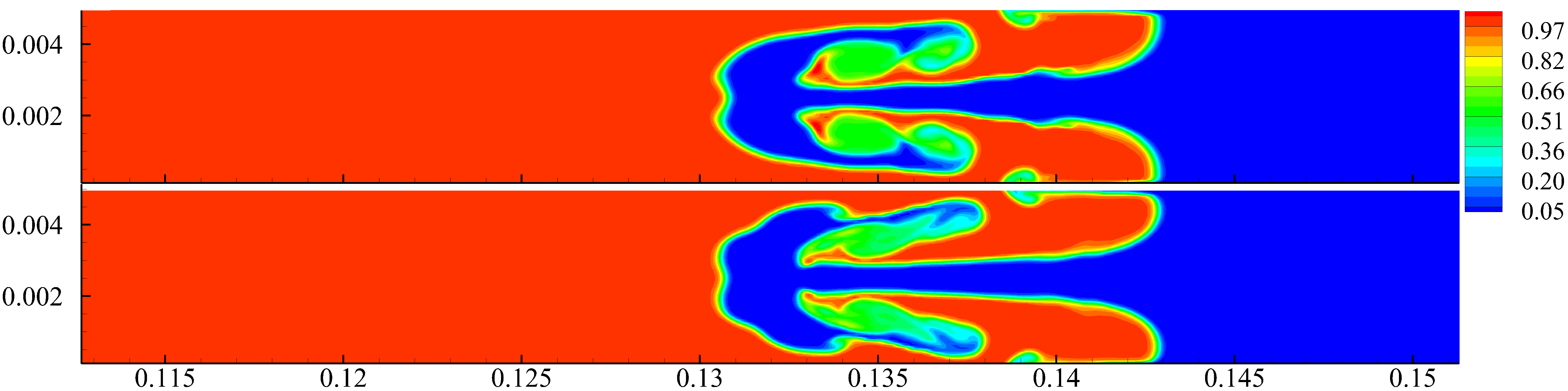}}
\caption{Numerical results for the density, the temperature and the mass fraction  when $t = 12$ns for the laser-driven RM instability problem. Top figures are results of the four-equation model, bottom ones are that of the reduced model. The dimension for length, density and temperature are cm, g/cm$^3$ and MK, respectively. }
\label{fig:figABL_comp} 
\end{figure}

In \Cref{fig:ABL_STA} we compare the numerically obtained interface evolution parameters with the experimental ones. \Cref{ABL_STA:interfPos} demonstrates the evolution of the leftmost interface position (i.e., the distance from its initial position) with time. Good agreement with the experimental results are observed. \Cref{ABL_STA:amp} shows the time evolution of the half peak-to-valley amplitude. Both models give results that lie within the measurement range.

\begin{figure}[htbp]
\centering
\subfloat[Interface position]{\label{ABL_STA:interfPos}\includegraphics[width=0.5\textwidth]{./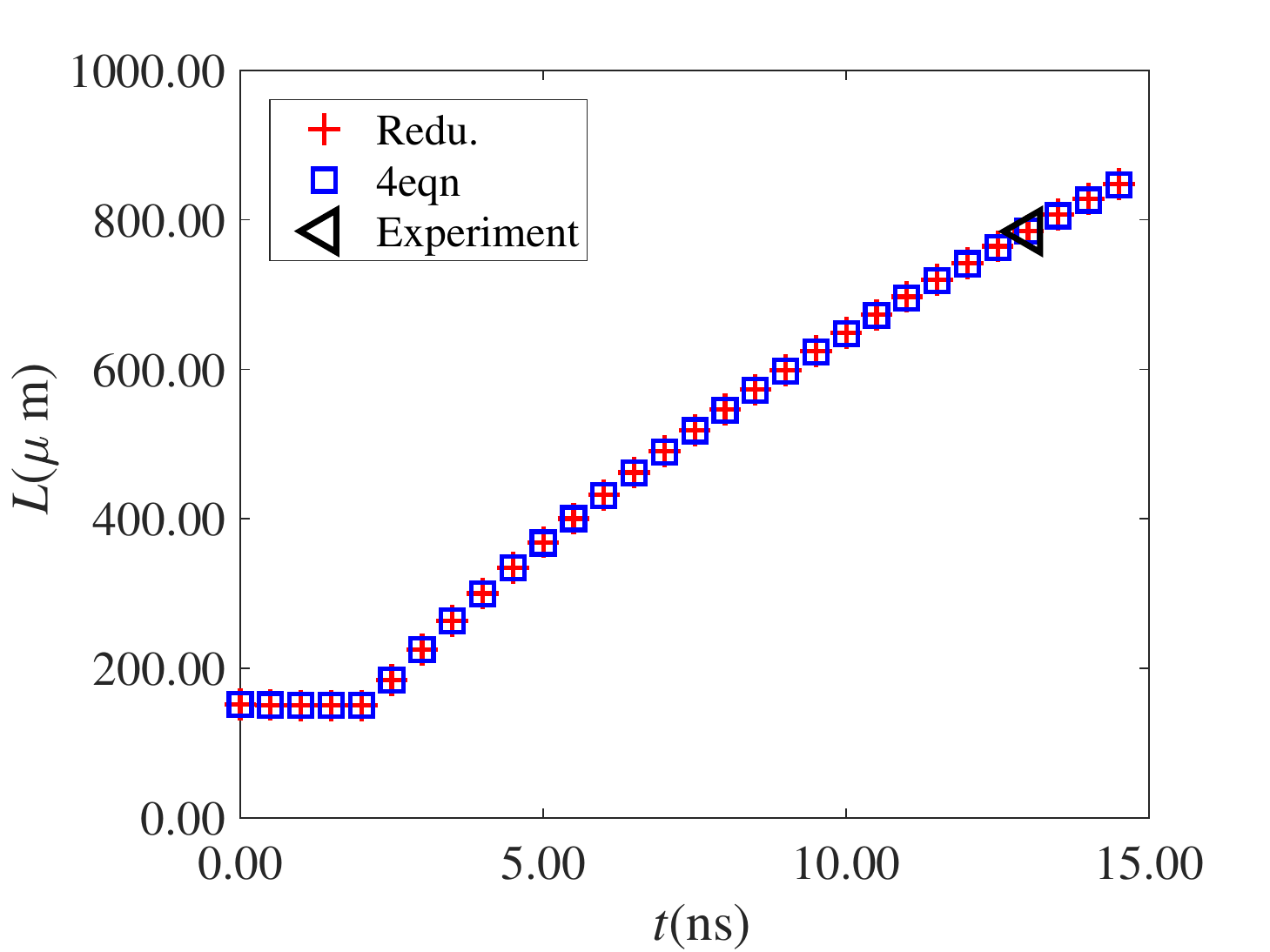}}
\subfloat[Amplitude]{\label{ABL_STA:amp}\includegraphics[width=0.5\textwidth]{./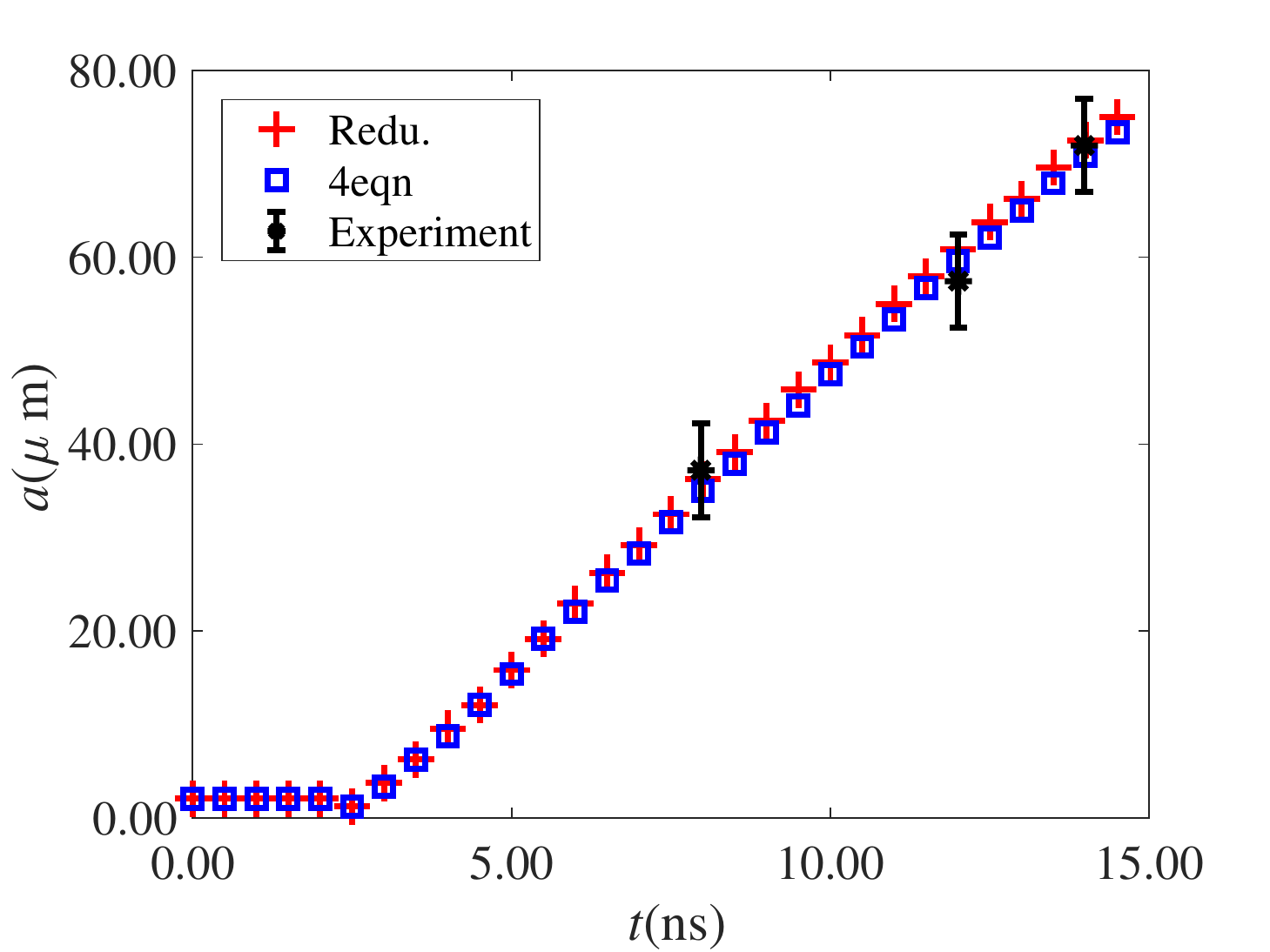}}
\caption{The time evolution of the interface position and amplitude of the laser-driven RM instability problem.}
\label{fig:ABL_STA} 
\end{figure}

 We define the Reynolds number $Re = u L / \nu$, where $u$ is the characteristic velocity $\frac{1}{2}\rho u^2 = E$, $E$ is the the total deposited laser energy, $\rho$ is the average mixture density, the characteristic length $L$ is taken to be the wavelength of the initial perturbation and the $\nu$ the initial maximum mixture kinetic viscosity. To investigate the impact of the diffusivity, we increase $Re$ through the viscosity. 
The numerical solutions for the mass fraction in the case of different diffusivities are compared in \Cref{fig:ABLRM_DIFF_NO_DIFF}. We can see that the transport process tend to wipe out the small flow structures. The last two figures compare the numerical results with and without the mass diffusion, whose effect in smearing the mass fraction is evident.

\begin{figure}[htbp]
\centering
\includegraphics[width=0.6\textwidth]{./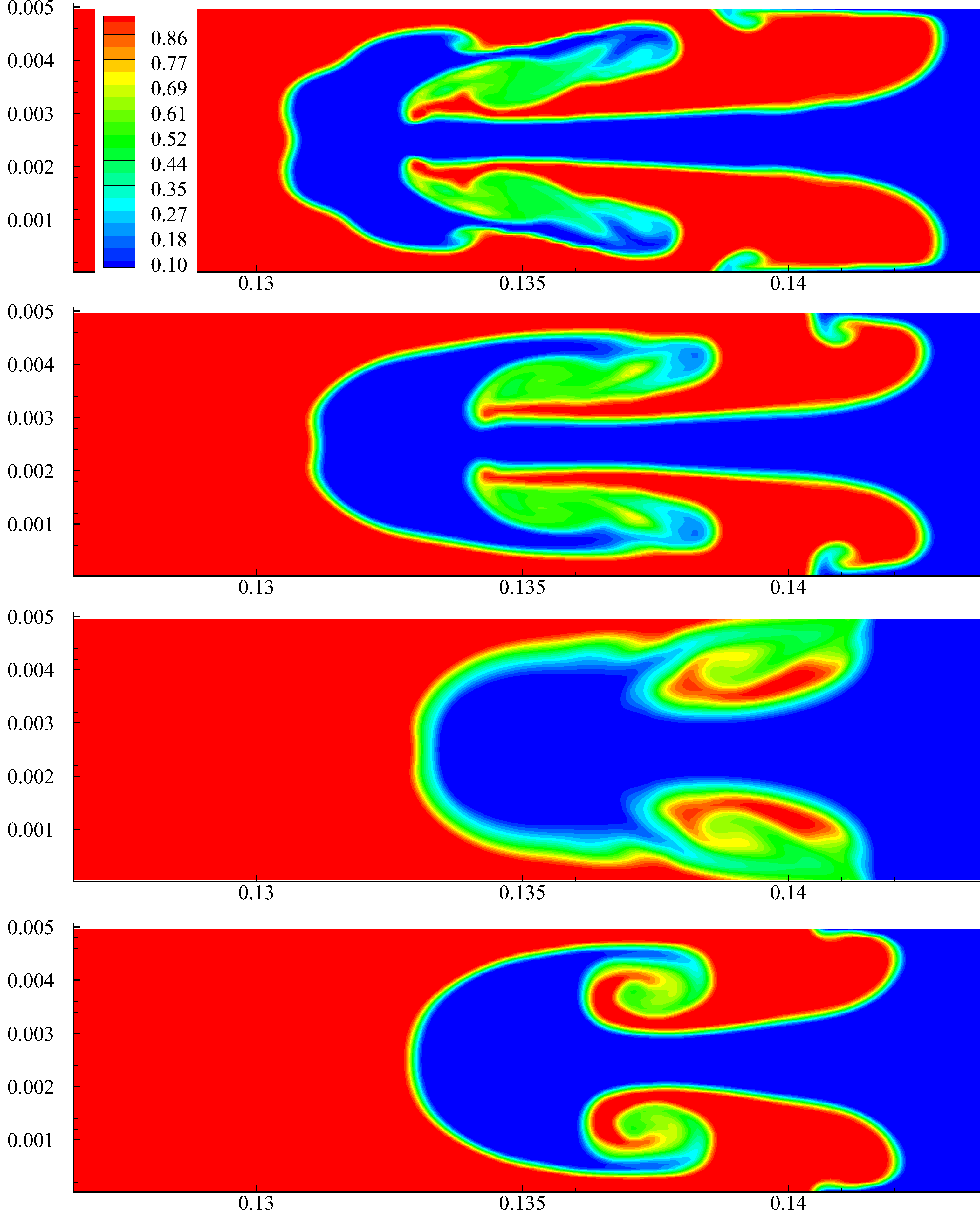}
\caption{numerical solutions for the mass fraction of the C-foam in the case of different diffusivities. The Reynolds number for the experiment condition is used as the reference $Re_{exp}$. From top to bottom: (1) $Re = Re_{exp}, \; Sc = 1$, (2) $Re =  Re_{exp}/10, \; Sc = 1$, (3) $Re = Re_{exp}/50,  \; Sc = 1$, (4) $Re = Re_{exp}, \; 1/Sc \to 0$.  }
\label{fig:ABLRM_DIFF_NO_DIFF} 
\end{figure}

\section{Conclusion}
\label{sec:conclusion}
In the present paper we have presented a diffuse-interface model for compressible multicomponent flows with interphase heat transfer, external energy source, and diffusions (including viscous, heat conduction, mass diffusion, enthalpy diffusion processes). The model is reduced from the Baer-Nunziato model in the limit of instantaneous mechanical relaxations. Difference between time scales of velocity, pressure and temperature relaxations has been accounted for. The reduction procedure results in a temperature-disequilibrium, velocity-disequilibrium, and pressure-equilibrium five-equation model.  The proposed model  if free of the spurious oscillation problem in the vicinity of the interface and respects the laws of thermodynamics. Numerical methods for its solution have been proposed on the basis of the fractional step method. The model is split into five parts including the hydrodynamic part, the viscous part, the temperature relaxation part, the heat conduction part and the mass diffusion part. The hyperbolic equations involved are solved with the Godunov finite volume method, and the parabolic ones with the locally iterative method based on Chebyshev parameters. The developed model and numerical methods have been used for solving several multicomponent problems and verified against analytical and experimental results. Moreover, we have applied our model to simulate the laser ablation process of a multicomponent target, where the Richtmyer-Meshkov instability can be evidently observed. Comparison with experimental results demonstrates that our model captures physical phenomenon of this process.


\bibliographystyle{unsrt} 
\bibliography{ref}

\begin{thebibliography}{10}

\bibitem{robey2004effects}
HF~Robey.
\newblock Effects of viscosity and mass diffusion in hydrodynamically unstable
  plasma flows.
\newblock {\em Physics of plasmas}, 11(8):4123--4133, 2004.

\bibitem{vold2021plasma}
E~Vold, L~Yin, and BJ~Albright.
\newblock Plasma transport simulations of rayleigh--taylor instability in
  near-icf deceleration regimes.
\newblock {\em Physics of Plasmas}, 28(9):092709, 2021.

\bibitem{Thornber2018}
Ben Thornber, Michael Groom, and David Youngs.
\newblock A five-equation model for the simulation of miscible and viscous
  compressible fluids.
\newblock {\em Journal of Computational Physics}, 372, 03 2018.

\bibitem{Cook2009Enthalpy}
Andrew Cook.
\newblock Enthalpy diffusion in multicomponent flows.
\newblock {\em Physics of Fluids}, 21, 12 2008.

\bibitem{abgrall1996prevent}
R.~Abgrall.
\newblock How to prevent pressure oscillations in multicomponent flow
  calculations: a quasi conservative approach.
\newblock {\em Journal of Computational Physics}, 125(1):150--160, 1996.

\bibitem{abgrall2001computations}
R{\'e}mi Abgrall and Smadar Karni.
\newblock Computations of compressible multifluids.
\newblock {\em Journal of computational physics}, 169(2):594--623, 2001.

\bibitem{saurel1999multiphase}
R.~Saurel and R.~Abgrall.
\newblock A multiphase godunov method for compressible multifluid and
  multiphase flows.
\newblock {\em Journal of Computational Physics}, 150(2):425--467, 1999.

\bibitem{kapila2001two}
AK~Kapila, R~Menikoff, JB~Bdzil, SF~Son, and D~Scott Stewart.
\newblock Two-phase modeling of deflagration-to-detonation transition in
  granular materials: Reduced equations.
\newblock {\em Physics of fluids}, 13(10):3002--3024, 2001.

\bibitem{BAER1986861}
M.R. Baer and J.W. Nunziato.
\newblock A two-phase mixture theory for the deflagration-to-detonation
  transition ({DDT}) in reactive granular materials.
\newblock {\em International Journal of Multiphase Flow}, 12(6):861 -- 889,
  1986.

\bibitem{saurel2009simple}
R.~Saurel, F.~Petitpas, and R.~Berry.
\newblock Simple and efficient relaxation methods for interfaces separating
  compressible fluids, cavitating flows and shocks in multiphase mixtures.
\newblock {\em Journal of Computational Physics}, 228(5):1678--1712, 2009.

\bibitem{murrone2005five}
A.~Murrone and H.~Guillard.
\newblock A five equation reduced model for compressible two phase flow
  problems.
\newblock {\em Journal of Computational Physics}, 202(2):664--698, 2005.

\bibitem{perigaud2005compressible}
G.~Perigaud and R.~Saurel.
\newblock A compressible flow model with capillary effects.
\newblock {\em Journal of Computational Physics}, 209(1):139--178, 2005.

\bibitem{bilicki1996evaluation}
Zbigniew Bilicki, Roman Kwidzi{\'n}ski, and Salim~Ali Mohammadein.
\newblock Evaluation of the relaxation time of heat and mass exchange in the
  liquid-vapour bubble flow.
\newblock {\em International journal of heat and mass transfer},
  39(4):753--759, 1996.

\bibitem{guillard2006numerical}
Herv{\'e} Guillard and Mathieu Labois.
\newblock Numerical modelling of compressible two-phase flows.
\newblock 2006.

\bibitem{zein2010}
Ali Zein, Maren Hantke, and Gerald Warnecke.
\newblock Modeling phase transition for compressible two-phase flows applied to
  metastable liquids.
\newblock {\em Journal of Computational Physics}, 229(8):2964--2998, 2010.

\bibitem{Coralic2014Finite}
V.~Coralic and T.~Colonius.
\newblock Finite-volume weno scheme for viscous compressible multicomponent
  flows.
\newblock {\em Journal of Computational Physics}, 274:95--121, 2014.

\bibitem{kreeft2010new}
J.~J Kreeft and B.~Koren.
\newblock A new formulation of kapila’s five-equation model for compressible
  two-fluid flow, and its numerical treatment.
\newblock {\em Journal of Computational Physics}, 229(18):6220--6242, 2010.

\bibitem{Zhangchao2020}
C.~Zhang and I.~Menshov.
\newblock Eulerian model for simulating multi-fluid flows with an arbitrary
  number of immiscible compressible components.
\newblock {\em Journal of Scientific Computing}, 83(2):1--33, 2020.

\bibitem{Zhukov2010}
V.~T. Zhukov.
\newblock Explicit methods for the numerical integration of parabolic
  equations.
\newblock {\em Mat. Model.}, 22:127–158, 2010.

\bibitem{zhukov2018}
V.~T. Zhukov, O.~B. Feodoritova, A.~P. Duben, and N.~D. Novikova.
\newblock Explicit time integration of the {Navier-Stokes} equations using the
  local iteration method.
\newblock {\em KIAM Preprint}, 12:1--32, 12 2019.

\bibitem{petitpas2014}
Fabien Petitpas and Sebastien Le~Martelot.
\newblock A discrete method to treat heat conduction in compressible two-phase
  flows.
\newblock {\em Computational Thermal Sciences}, 6:251--271, 2014.

\bibitem{drew1983mathematical}
Donald~A Drew.
\newblock Mathematical modeling of two-phase flow.
\newblock {\em Annual review of fluid mechanics}, 15(1):261--291, 1983.

\bibitem{saurel2018diffuse}
R.~Saurel and C.~Pantano.
\newblock Diffuse-interface capturing methods for compressible two-phase flows.
\newblock {\em Annual Review of Fluid Mechanics}, 50:105--130, 2018.

\bibitem{saurel2003multiphase}
Richard Saurel, Sergey Gavrilyuk, and Fran{\c{c}}ois Renaud.
\newblock A multiphase model with internal degrees of freedom: application to
  shock--bubble interaction.
\newblock {\em Journal of Fluid Mechanics}, 495:283--321, 2003.

\bibitem{zhangPHD2019}
C.~Zhang.
\newblock {\em Mathematical modeling of heterogeneous multi-material flows (in
  {Russian})}.
\newblock PhD thesis, Lomonosov Moscow State University, 2019.

\bibitem{williams1985}
F.A. Williams.
\newblock {\em Combustion theory}.
\newblock The Benjamin/Cummings Publishing Company, inc, 1985.

\bibitem{petitpas2009modelling}
F~Petitpas, Richard Saurel, Erwin Franquet, and A~Chinnayya.
\newblock Modelling detonation waves in condensed energetic materials:
  Multiphase cj conditions and multidimensional computations.
\newblock {\em Shock waves}, 19(5):377--401, 2009.

\bibitem{GEURST1986455}
J.A. Geurst.
\newblock Variational principles and two-fluid hydrodynamics of bubbly
  liquid/gas mixtures.
\newblock {\em Physica A: Statistical Mechanics and its Applications},
  135(2):455--486, 1986.

\bibitem{gouin2008dissipative}
Henri Gouin and Sergey Gavrilyuk.
\newblock Dissipative two-fluid models, 2008.

\bibitem{Abgrall2001}
Rémi Abgrall and Smadar Karni.
\newblock Computations of compressible multifluids.
\newblock {\em Journal of Computational Physics}, 169(2):594--623, 2001.

\bibitem{Toro2009Riemann}
E.F. Toro.
\newblock {\em {Riemann} Solvers and Numerical Methods for Fluid Dynamics}.
\newblock Springer, 2009.

\bibitem{Johnsen2006Implementation}
E.~Johnsen and T.~Colonius.
\newblock Implementation of weno schemes in compressible multicomponent flow
  problems.
\newblock {\em Journal of Computational Physics}, 219(2):715--732, 2006.

\bibitem{JIANG1996202}
G.S. Jiang and C.W. Shu.
\newblock Efficient implementation of weighted eno schemes.
\newblock {\em Journal of Computational Physics}, 126(1):202--228, 1996.

\bibitem{leveque2002finite}
Randall~J LeVeque et~al.
\newblock {\em Finite volume methods for hyperbolic problems}, volume~31.
\newblock Cambridge university press, 2002.

\bibitem{Tiwari2013A}
Arpit Tiwari, Jonathan~B. Freund, and Carlos Pantano.
\newblock A diffuse interface model with immiscibility preservation.
\newblock {\em Journal of Computational Physics}, 252(C):290--309, 2013.

\bibitem{Lemartelot2014}
S.~Lemartelot, R.~Saurel, and B.~Nkonga.
\newblock Towards the direct numerical simulation of nucleate boiling flows.
\newblock {\em International Journal of Multiphase Flow}, 66(7):62--78, 2014.

\bibitem{zhukov2018development}
Victor~Timofeevich Zhukov and Olga~Borisovna Feodoritova.
\newblock On development of parallel algorithms for the solution of parabolic
  and elliptic equations.
\newblock {\em Itogi Nauki i Tekhniki. Seriya" Sovremennaya Matematika i ee
  Prilozheniya. Tematicheskie Obzory"}, 155:20--37, 2018.

\bibitem{Alahyari2015}
S.~Alahyari~Beig and E.~Johnsen.
\newblock Maintaining interface equilibrium conditions in compressible
  multiphase flows using interface capturing.
\newblock {\em Journal of Computational Physics}, 302:548--566, 2015.

\bibitem{Johnsen2012}
E.~Johnsen and F.~Ham.
\newblock Preventing numerical errors generated by interface-capturing schemes
  in compressible multi-material flows.
\newblock {\em Journal of Computational Physics}, 231(17):5705–5717, 2012.

\bibitem{allaire2002five}
G.~Allaire, S.~Clerc, and S.~Kokh.
\newblock A five-equation model for the simulation of interfaces between
  compressible fluids.
\newblock {\em Journal of Computational Physics}, 181(2):577--616, 2002.

\bibitem{larrouturou1991preserve}
Bernard Larrouturou.
\newblock How to preserve the mass fractions positivity when computing
  compressible multi-component flows.
\newblock {\em Journal of computational physics}, 95(1):59--84, 1991.

\bibitem{Kokkinakis2015}
Ioannis Kokkinakis, Dimitris Drikakis, David Youngs, and R.J.R. Williams.
\newblock Two-equation and multi-fluid turbulence models for rayleigh–taylor
  mixing.
\newblock {\em International Journal of Heat and Fluid Flow}, 56:233--250, 12
  2015.

\bibitem{Livescu2013}
D.~Livescu.
\newblock A multiphase model with internal degrees of freedom: application to
  shock–bubble interaction.
\newblock {\em Philosophical transactions. Series A, Mathematical, physical,
  and engineering sciences}, 371:283--321, 2013.

\bibitem{balakumar2008simultaneous}
BJ~Balakumar, GC~Orlicz, CD~Tomkins, and KP~Prestridge.
\newblock Simultaneous particle-image velocimetry--planar laser-induced
  fluorescence measurements of richtmyer--meshkov instability growth in a gas
  curtain with and without reshock.
\newblock {\em Physics of Fluids}, 20(12):124103, 2008.

\bibitem{mikaelian1996numerical}
Karnig~O Mikaelian.
\newblock Numerical simulations of richtmyer--meshkov instabilities in
  finite-thickness fluid layers.
\newblock {\em Physics of Fluids}, 8(5):1269--1292, 1996.

\bibitem{Haas1987}
J.F. Haas and B.~Sturtevant.
\newblock Interaction of weak shock waves with cylindrical and spherical gas
  inhomogeneities.
\newblock {\em Journal of Fluid Mechanics}, 181:41 -- 76, 09 1987.

\bibitem{2018Capuano}
M.~Capuano, C.~Bogey, and P.~D.~M. Spelt.
\newblock Simulations of viscous and compressible gas-gas flows using
  high-order finite difference schemes.
\newblock {\em Journal of Computational Physics}, 361:56--81, 2018.

\bibitem{Sutherland}
W.~Sutherland.
\newblock {\em The Viscosity of Gases and Molecular Force}, volume~36.
\newblock 1893.

\bibitem{wasik1969measurements}
SP~Wasik and KE~McCulloh.
\newblock Measurements of gaseous diffusion coefficients by a gas
  chromatographic technique.
\newblock {\em Journal of research of the National Bureau of Standards. Section
  A, Physics and chemistry}, 73(2):207, 1969.

\bibitem{Robey2003The}
H.~F. Robey, Y.~Zhou, A.~C. Buckingham, P.~Keiter, B.~A. Remington, and R.~P.
  Drake.
\newblock The time scale for the transition to turbulence in a high reynolds
  number, accelerated flow.
\newblock {\em Physics of Plasmas}, 10(3):614--622, 2003.

\bibitem{miles2004numerical}
AR~Miles, DG~Braun, MJ~Edwards, HF~Robey, RP~Drake, and DR~Leibrandt.
\newblock Numerical simulation of supernova-relevant laser-driven hydro
  experiments on omega.
\newblock {\em Physics of plasmas}, 11(7):3631--3645, 2004.

\bibitem{Spitzer1953}
L.~Spitzer and R.~Harm.
\newblock Transport phenomena in a completely ionized gas.
\newblock {\em Physical Review}, 89(5):977–981, 1953.

\bibitem{clerouin1998viscosity}
JG~Cl{\'e}rouin, MH~Cherfi, and G~Z{\'e}rah.
\newblock The viscosity of dense plasmas mixtures.
\newblock {\em EPL (Europhysics Letters)}, 42(1):37, 1998.

\bibitem{paquette1986diffusion}
C~Paquette, C~Pelletier, G~Fontaine, and G~Michaud.
\newblock Diffusion coefficients for stellar plasmas.
\newblock {\em The Astrophysical Journal Supplement Series}, 61:177--195, 1986.

\end{thebibliography}


\end{document}